\newcolumntype{C}[1]{>{\centering\arraybackslash}m{#1}}
\tikzstyle{none}=[inner sep=0pt]
\newcommand*{\Scale}[2][4]{\scalebox{#1}{$#2$}}%
\newcommand{\bN}{\mathbb N}
\newcommand{\bK}{\mathbb K}
\newcommand{\bS}{\mathbb S}
\newcommand{\R}{\mathcal R}
\newcommand{\bZ}{\mathbb Z}
\newcommand{\Hmu}{\mathrm{H}_{\mu}}
\newcommand{\Cmu}{C_{\mu}}
\newcommand{\qdim}{\mathrm{qdim}}
\renewcommand{\H}{\mathrm{H}}
\newcommand{\sign}{\rm sgn}
\newcommand{\Cone}{{\mathfrak{Cone}~}}
\newcommand{\lgt}{\mathrm{lenght}}
\title{Combinatorial and Topological Aspects of Path Posets, and Multipath Cohomology}
\author{Luigi Caputi}
\address{LC: Institute of Mathematics, University of Aberdeen, AB24 3UE, UK
}
\email{luigi.caputi@abdn.ac.uk}
\address{ORCID id: https://orcid.org/0000-0002-6853-7651}
\author{Carlo Collari}
\address{CC: Dipartimento di Matematica, Universit\`a di Pisa, 56127 Pisa, IT
}
\email{carlo.collari.math@gmail.com}
\address{ORCID id: https://orcid.org/0000-0003-0034-0702}
\author{Sabino Di Trani}
\address{SDT (Corresponding Author): Dipartimento di Matematica , Universit\`a di Trento,  IT
}
\email{sabino.ditrani@unitn.it}
\address{ORCID id: https://orcid.org/0000-0002-6651-558X}
\theoremstyle{plain}
\newtheorem{thm}{Theorem}[section]
\newtheorem{prop}[thm]{Proposition}
\newtheorem{lem}[thm]{Lemma}
\newtheorem{cor}[thm]{Corollary}
\theoremstyle{remark}
\newtheorem{rem}[thm]{Remark}
\newtheorem*{construction}{Construction of $X(\tG)$}
\newtheorem{example}[thm]{Example} 
\newtheorem{q}[thm]{Question}
\theoremstyle{definition}
\newtheorem{defn}[thm]{Definition}
\newtheorem{crit}{Criterion}
\newtheorem*{crit*}{Criterion~B}
\definecolor{aquamarine}{rgb}{0.5, 1.0, 0.83}
\definecolor{princetonorange}{rgb}{1.0, 0.56, 0.0}
\definecolor{caribbeangreen}{rgb}{0.0, 0.8, 0.6}
\definecolor{bunired}{rgb}{0.8, 0.0, 0.0}
\definecolor{cdgreen}{rgb}{0.0, 0.42, 0.24}
\definecolor{lavender(floral)}{rgb}{0.71, 0.49, 0.86}
\definecolor{bluedefrance}{rgb}{0.19, 0.55, 0.91}
\newcommand{\tG}{{\tt G} }
\newcommand{\tH}{{\tt H} }
\newcommand{\tI}{{\tt I}}
\newcommand{\tD}{{\tt D}}
\newcommand{\tT}{{\tt T}}
\newcommand{\tP}{{\tt P}}
\newcommand{\tL}{{\tt L}}
\newcommand{\tA}{{\tt A}}
\newcommand{\Hasse}{{\tt Hasse}}
\begin{document}
\maketitle

\begin{abstract}
Multipath cohomology is a cohomology theory for directed graphs, which is defined using the path poset.
The aim of this paper is to investigate combinatorial properties of path posets, and to provide computational tools for multipath cohomology. In particular, we develop acyclicity criteria, and provide computations of multipath cohomology groups of oriented linear graphs.  We further interpret the path poset as the face poset of a simplicial complex, and we investigate realisability problems.
\end{abstract}
\section{Introduction}

Cohomology theories of directed graphs (shortly, digraphs) have become extremely important tools, and  are, nowadays, of central interest for the mathematical and scientific community. 
This is mainly due to the emergence of new techniques in Topological Data Analysis, which hinge on (co)homological and homotopical methods.

In this paper we are concerned with a cohomology theory of digraphs called multipath cohomology~\cite{primo}, and denoted by $\mathrm{H}_{\mu}^*$. This is defined as the poset homology~\cite{chandler2019posets,primo} of the path poset (cf.~\cite{turner}).
More abstractly, multipath cohomology can be seen as a functor cohomology, or as a {cellular cohomology}~\cite{TurnerEverittCell} -- see~\cite[Section~6]{primo} for a comparison. 
%
The main advantage of using poset homology over functor/cellular cohomology is its amenability to computations.
In view of the discussion in~\cite[Section~6]{primo}, the computations provided here yield information about the functor and cellular cohomology groups of (a mild modification of) path posets.

The goal of this paper is to analyse  how the combinatorics of the path poset may affect multipath cohomology. As a by-product, we develop a number of techniques which can be used to explicitly compute~$\mathrm{H}_{\mu}^\ast$ with coefficients in a field. A first application is the following theorem (see Theorem~\ref{thm:linearcohomology});

\begin{thm}
\label{thm:complete comp line digr}
Let $\tL$ be an oriented linear graph and $\bK$ be a field. Then,  there exist integers $h$, $k_1,\dots,k_h$, which are combinatorially determined by~$\tL$, such that the multipath cohomology of~$\tL$ 
is trivial if $k_i\equiv 0 \mod 3$ and $1 \leq i< h$; otherwise multipath cohomology
decomposes as the tensor product of graded modules, as follows
\[
\quad \Hmu^{* + h - 1}(\tL;\bK)= \Hmu^*(\mathtt{A}_{3\lfloor k_1/3 \rfloor}; \bK)\otimes \dots \otimes \Hmu^*(\mathtt{A}_{3\lfloor k_{h-1}/3 \rfloor}; \bK)\otimes \Hmu^*(\mathtt{A}_{k_h} ; \bK) \ ,
\]
 where each factor 
\begin{equation*}
  \mathrm{H}_{\mu}^k(\tA_n; \bK) =
\begin{cases}
    \bK & \mbox{if }  n=3(k-1)+2 \mbox{ or } n=3k,\\
    	0 & \mbox{otherwise} \\
	\end{cases}
\end{equation*}
is the multipath cohomology group of an alternating graph $\tA_n$ --~cf.~Figure~\ref{fig:alternating}.
\end{thm}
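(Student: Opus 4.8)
The plan is to assemble $\Hmu^*(\tL)$ from the cohomologies of its maximal alternating sub-paths by an inductive argument based on long exact sequences. The first task is to name the combinatorial data: reading off the sequence of edge orientations of $\tL$, call an internal vertex \emph{straight} if some directed simple path runs through it, and cut $\tL$ at its straight vertices into its maximal alternating sub-paths $\tA_{k_1},\dots,\tA_{k_h}$; then $h-1$ is the number of straight vertices, $\sum_i k_i$ is the number of edges, and these are the integers in the statement. The feature of a straight vertex $w_j$ that makes the argument work is that its two neighbouring (non-incident) edges point the opposite way, so that the \emph{only} directed path crossing $w_j$ is the length-two path formed by the two edges at $w_j$.

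The base case is the computation of $\Hmu^*(\tA_n)$. The path poset of $\tA_n$ is, by inspection, the poset of matchings of the path on $n$ edges, so I would identify $\Hmu^*(\tA_n)$ with a degree shift of the reduced cohomology of the matching complex $M(L_n)$ --- either by a direct argument, or through the simplicial-complex description of multipath cohomology promised later in the paper --- and then induct on $n$ via the long exact sequence resolving an extremal edge (equivalently, via the homotopy recursion for matching complexes of paths): two consecutive steps yield a periodic isomorphism $\Hmu^{*}(\tA_n)\cong\Hmu^{*-1}(\tA_{n-3})$, and the base cases $n=0,1,2$ give the stated period-three answer.

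For the inductive step I would strip off the first block. At the straight vertex $w_1$, with incident edges $e$ (the last edge of $\tA_{k_1}$) and $f$ (the first edge of $\tA_{k_2}$), the multipaths of $\tL$ containing the crossing path $\{e,f\}$ form a filter; deleting $e$ and $f$ together with their three vertices identifies the corresponding subcomplex, up to a shift by $2$, with the cochain complex of $\tA_{k_1-2}\sqcup\bigl(\tA_{k_2-2}\cup_{w_2}\tA_{k_3}\cup\cdots\cup_{w_{h-1}}\tA_{k_h}\bigr)$, while the quotient complex is the complex of multipaths of $\tL$ avoiding the pair $\{e,f\}$ --- which is itself an order ideal inside the multipaths of the disconnected graph $\tA_{k_1}\sqcup\tL'$, where $\tL'=\tA_{k_2}\cup\cdots\cup\tA_{k_h}$. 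Comparing these two extensions, feeding in the disjoint-union Künneth formula and the inductive hypothesis (applied to $\tL'$ and to the $(h-1)$-block graphs that appear), the whole computation collapses onto one connecting homomorphism; by the explicit base-case answer this map is forced to be either zero or an isomorphism according to $k_1\bmod 3$, so that $\tA_{k_1}$ is replaced by $\tA_{3\lfloor k_1/3\rfloor}$ with a degree shift of $1$ when $k_1\not\equiv0\pmod3$, and $\Hmu^*(\tL)=0$ when $k_1\equiv0\pmod3$. Iterating over the $h-1$ straight vertices --- each contributing one unit of the overall shift $h-1$ --- leaves the terminal block $\tA_{k_h}$ untouched and yields the tensor-product formula.

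I expect the main obstacle to be exactly this last step: controlling the connecting homomorphism and keeping honest track of how cutting at $w_1$ reshuffles the list of block lengths. The crossing path shortens two neighbouring blocks by $2$ --- that is, shifts their residues by $+1$ mod $3$ --- whereas the final answer wants the original residues, so there must be a cancellation in the long exact sequence, and deciding \emph{which} of the two a priori possible connecting maps occurs is precisely what separates the vanishing case from the generic one. The cleanest route is probably to isolate a single lemma resolving one straight vertex --- of the form ``$\Hmu^{*+1}(\tA_a\cup_w\tL_0)\cong\Hmu^*(\tA_{3\lfloor a/3\rfloor})\otimes\Hmu^*(\tL_0)$ if $a\not\equiv0\pmod3$, and $0$ otherwise'' --- prove it by the double-extension argument above with an internal induction on $a$ running in parallel with the base-case recursion, and then apply it $h-1$ times.
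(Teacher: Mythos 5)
Your overall strategy is the paper's: split $\tL$ into alternating blocks, prove the period-three computation of $\Hmu^*(\tA_n;\bK)$, then strip one block at a time with a long exact sequence. Your route to the base case, via the matching complex of a path, is a legitimate alternative to the paper's argument (which uses the gluing decomposition of path posets plus Mayer--Vietoris), and the inductive step you sketch is morally the paper's. The proof nevertheless fails as proposed, because the combinatorial data is set up with an off-by-one error that the formula does not tolerate. You cut $\tL$ at its straight (unstable) vertices, so your blocks partition $E(\tL)$ and $\sum_i k_i=|E(\tL)|$; the paper instead passes to the reduction $\mathrm{Red}\,\tL$, which \emph{deletes} one edge from each length-two coherent run, so each non-terminal block length of the paper is one less than yours. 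Neither the condition $k_i\equiv 0\ (\mathrm{mod}\ 3)$ nor the expression $3\lfloor k_i/3\rfloor$ is invariant under this shift.

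Concretely, your key lemma, ``$\Hmu^{*+1}(\tA_a\cup_w\tL_0;\bK)\cong\Hmu^*(\tA_{3\lfloor a/3\rfloor};\bK)\otimes\Hmu^*(\tL_0;\bK)$ if $a\not\equiv 0\ (\mathrm{mod}\ 3)$, and $0$ otherwise,'' is false. Take $a=3$ and $\tL_0=\tA_2$, i.e.\ $\tL\colon v_0\to v_1\leftarrow v_2\to v_3\to v_4\leftarrow v_5$, which is $\tA_3\cup_{v_3}\tA_2$ in your notation: its multipath complex is two filled triangles glued along an edge with an extra arc attached at two boundary points, a space homotopy equivalent to a circle, so $\Hmu^2(\tL;\bK)\cong\bK\neq 0$, while your lemma predicts $0$ since $3\equiv 0\ (\mathrm{mod}\ 3)$. (The paper's data here is $\mathrm{Red}\,\tL=\tA_2\sqcup\tA_2$, $k_1=2\not\equiv 0$, giving $\Hmu^{*+1}\cong\Hmu^*(\tA_0)\otimes\Hmu^*(\tA_2)$, which is correct.) Conversely, for $v_0\to v_1\to v_2\leftarrow v_3\to v_4$ your blocks are $(\tA_1,\tA_3)$ with $k_1=1\not\equiv 0$, so your lemma predicts $\Hmu^2\cong\bK$; but the multipath complex is a filled triangle with a pendant edge, hence contractible, and the cohomology vanishes (the paper's $k_1$ equals $0$ and triggers the vanishing clause). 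The repair is exactly the paper's convention: the two edges of the length-two run crossing a junction are not symmetric, and the one on the earlier block's side must be discarded before the block length is measured. With that bookkeeping the congruences come out right, and the connecting homomorphism you identify as the main obstacle never needs to be computed: for each residue of $k_1$ modulo $3$, one of the two outer terms of the Mayer--Vietoris sequence already vanishes by the alternating-graph computation. (A secondary point: your claim that the only directed path crossing a straight vertex has length two fails whenever a coherent run has length at least three; those cases must be disposed of separately, as the paper does by showing the cohomology is then trivial.)
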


\begin{figure}[h]
    \centering
    \begin{tikzpicture}[baseline=(current bounding box.center)]
		\tikzstyle{point}=[circle,thick,draw=black,fill=black,inner sep=0pt,minimum width=2pt,minimum height=2pt]
		\tikzstyle{arc}=[shorten >= 8pt,shorten <= 8pt,->, thick]
		
		\node[above] (v0) at (0,0) {$v_0$};
		\draw[fill] (0,0)  circle (.05);
		\node[above] (v1) at (1.5,0) {$v_1$};
		\draw[fill] (1.5,0)  circle (.05);
		\node[above] (v2) at (3,0) {$v_2$};
		\draw[fill] (3,0)  circle (.05);
		\node[above] (v4) at (4.5,0) {$v_3$};
		\draw[fill] (4.5,0)  circle (.05);
		\node[above] (v5) at (6,0) {$v_4$};
		\draw[fill] (6,0)  circle (.05);
		\node[above] (v6) at (7.5,0) {$v_5$};
		\draw[fill] (7.5,0)  circle (.05);
		 \node[above]  at (9,0) {$v_{n-1}$};
		 \node (v7) at (9,0) {};
		\draw[fill] (9,0)  circle (.05);
		
		\node   at (8.25,0) {$\dots$};
		 \node (v8) at (10.5,0) {};
		 \node[above]  at (10.5,0) {$v_n$};
		\draw[fill] (10.5,0)  circle (.05);
		
		\draw[thick, bunired, -latex] (0.15,0) -- (1.35,0);
		\draw[thick, bunired, -latex] (2.75,0) -- (1.65,0);
		\draw[thick, bunired, latex-] (4.35,0) -- (3.15,0);
		\draw[thick, bunired, latex-] (4.65,0) -- (5.85,0);
		\draw[thick, bunired, latex-] (7.35,0) -- (6.15,0);
	    \draw[thick, bunired, ] (v7) -- (v8);

	\end{tikzpicture}
	\caption{An alternating graph on $n+1$ vertices. The edge between $v_{n -1} $ and~$v_{n}$ can be oriented either way depending on the parity of $n$. }
    \label{fig:alternating}
\end{figure}
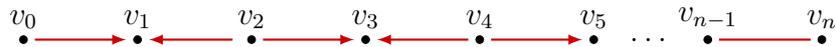

This result, together with the methods involved in its proof, proves that the multipath cohomology  of a digraph captures relevant combinatorial information. Among other implications, our computations reveal the non-triviality of multipath cohomology, in the sense of the following theorem (see Proposition~\ref{prop:comphom});

\begin{thm}
\label{thm:mulipath high degrees}
The multipath cohomology~$\Hmu^*(-;\bK)$ with coefficients in a field~$\bK$ can  be supported in arbitrarily high degree, and can be of arbitrarily high dimension.  
\end{thm}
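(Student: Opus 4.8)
There are two claims. That $\Hmu^*(-;\bK)$ is supported in arbitrarily high degree is immediate from the explicit description of the multipath cohomology of alternating graphs displayed in Theorem~\ref{thm:complete comp line digr}: taking $n=3k$ gives $\Hmu^{k}(\tA_{3k};\bK)\cong\bK$, which is nonzero in degree $k=n/3$, and letting $n$ run over the multiples of $3$ makes this degree unbounded.

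For the second claim one cannot stay inside the class of oriented linear graphs: in the tensor decomposition of Theorem~\ref{thm:complete comp line digr} each factor is $\Hmu^*(\tA_m;\bK)$ for some $m$, hence at most one-dimensional, so $\dim_{\bK}\Hmu^*(\tL;\bK)\le 1$ for every oriented linear graph $\tL$. The plan is to pass to disjoint unions. First I would record that the path poset of a disjoint union is the product of the path posets, $P(\tG_1\sqcup\tG_2)\cong P(\tG_1)\times P(\tG_2)$, and then upgrade this to a homotopy identification $X(\tG_1\sqcup\tG_2)\simeq X(\tG_1)\times X(\tG_2)$ for the simplicial complex $X(-)$ whose (reduced) cohomology computes $\Hmu^*(-;\bK)$. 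Granting this, fix a small alternating graph $\tA_{n_0}$ with $\Hmu^*(\tA_{n_0};\bK)$ one-dimensional (for instance $n_0=3$, so that $X(\tA_3)$ has the cohomology of a circle), and consider the disjoint union $\tA_{n_0}^{\sqcup m}$ of $m$ copies. Then $X(\tA_{n_0}^{\sqcup m})$ has the cohomology of a product of $m$ copies of that space, so by the Künneth theorem over $\bK$, $\Hmu^*(\tA_{n_0}^{\sqcup m};\bK)$ has total dimension $2^m-1$ and is supported in $m$ consecutive degrees. Letting $m\to\infty$, this single family already witnesses both assertions; replacing some copies by larger alternating graphs $\tA_{3k_i}$ spreads the cohomology over a still wider range of degrees.

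The delicate point is the disjoint-union formula: one has to identify $X(\tG_1\sqcup\tG_2)$ precisely, keep track of the overall degree shift and of the reduced-versus-unreduced bookkeeping forced by the least element of the path poset, and verify that the resulting Künneth sequence splits over $\bK$ and that its ``cross'' summand really does add to the total dimension. Afterwards no cancellation can occur, since the cohomology of each $X(\tA_{n_i})$ is concentrated in a single positive degree. Should the homotopy identification prove awkward, a fallback is to produce one small digraph $\tG_0$ with $\dim_{\bK}\Hmu^*(\tG_0;\bK)\ge 2$ and a plain tensor-product Künneth formula for multipath cohomology of disjoint unions, so that $\dim_{\bK}\Hmu^*(\tG_0^{\sqcup m};\bK)\ge 2^m$; then $\tG_0^{\sqcup m}\sqcup\tA_{3k}$ again realises both conclusions at once.
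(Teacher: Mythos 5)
Your argument for the first assertion is fine: $\Hmu^{k}(\tA_{3k};\bK)\cong\bK$ by Corollary~\ref{linalt}, so the supporting degree is unbounded. The problem is in the second half. The identification $X(\tG_1\sqcup\tG_2)\simeq X(\tG_1)\times X(\tG_2)$ is false: a set of edges of $\tG_1\sqcup\tG_2$ is a multipath exactly when its restriction to each piece is, so $X(\tG_1\sqcup\tG_2)$ is the simplicial \emph{join} $X(\tG_1)\ast X(\tG_2)$, not the product (this is consistent with $P(\tG_1\sqcup\tG_2)\cong P(\tG_1)\times P(\tG_2)$, since the face poset of a join, including the empty simplex, is the product of face posets). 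On the level of cohomology this is Remark~\ref{DisjointUnion}: $\Hmu^*(\tG_1\sqcup\tG_2;\bK)\cong\Hmu^*(\tG_1;\bK)\otimes\Hmu^*(\tG_2;\bK)$ as graded vector spaces, so total dimensions \emph{multiply}. Since every alternating graph has total cohomology of dimension at most one, $\tA_{n_0}^{\sqcup m}$ has total dimension $1$, not $2^m-1$; concretely $X(\tA_3)\simeq\bS^0$ (note $\Hmu^1=\widetilde{\H}^0$, so it is two points, not a circle), and the $m$-fold join of copies of $\bS^0$ is $\bS^{m-1}$. Your main construction therefore never produces dimension greater than one.

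Your fallback, however, is essentially correct and is close to what the paper does. A sink (or source) with $n+1$ edges, i.e.\ $\tD_{0,n+1}$, has $\Hmu^1\cong\bK^{n}$ concentrated in degree $1$ (Remark~\ref{rem:0dandelion}), which settles arbitrary dimension; and since tensoring with the one-dimensional, degree-one module $\Hmu^*(\tA_3;\bK)$ is exactly a degree shift, $\tD_{0,n+1}\sqcup\tA_3^{\sqcup(i-1)}$ has $\Hmu^{i}\cong\bK^{n}$ and nothing else, giving both conclusions at once. The paper instead achieves the degree shift by gluing a linear sink or source onto a univalent vertex (Lemma~\ref{lem:shift homology construction}, a cone/suspension argument via Theorem~\ref{finedimondo} and Mayer--Vietoris) applied iteratively to a sink, rather than by disjoint unions; both routes work, but you need to actually exhibit a digraph of dimension $\geq 2$ (the sink) and use the tensor-product formula of Remark~\ref{DisjointUnion} rather than an unreduced K\"unneth formula for a product of spaces.
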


Our interest in the combinatorial properties of the path poset is not limited to the purpose of understanding multipath cohomology groups, but it extends to its connections with the so-called {monotone} properties. 
A property of (di)graphs is called \emph{monotone} if it is preserved under deletion of edges -- e.g.~being acyclic, being a forest \emph{etc.} -- and the study of the homotopy type of simplicial complexes associated to monotone properties of (di)graphs is a central topic in combinatorial topology;
see, for instance, the classical papers~\cite{Vassiliev1993,bjorner_cdg, KozlovTrees,BBLSW}, as well as the more recent works \cite{zbMATH06797304,zbMATH07079126,PaoliniSalvetti}.
The study of a simplicial complex associated to the path poset fits into this framework.

In the last part of the paper we  describe the relationship between multipaths and simplicial complexes. We show that for a given digraph~$\tG$, there exists a simplicial complex $X(\tG)$ whose (reduced) simplicial cohomology is the multipath cohomology of $\tG$ -- see Theorem~\ref{thm:multipath is simplicial};

\begin{thm}
\label{thm:hs}
For each $n \geq 0$, the multipath cohomology group $\mathrm{H}_\mu^n(\tG;\bK)$ of $\tG$ is isomorphic to the ordinary homology~$\widetilde{\rm H}^{n-1}(X(\tG);\bK)$.
\end{thm}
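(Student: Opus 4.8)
The plan is to exhibit an isomorphism of cochain complexes between the complex computing $\Hmu^\bullet(\tG;\bK)$ and the augmented (reduced) simplicial cochain complex of $X(\tG)$, re-indexed so that an $(n-1)$-simplex contributes to degree $n$, and then to pass to cohomology. I would first write both complexes in the same combinatorial form. Unravelling the definition from~\cite{primo}, the degree-$n$ multipath cochain module $C^n_\mu(\tG;\bK)$ is a direct sum of copies of $\bK$ indexed by the multipaths of $\tG$ with exactly $n$ edges (one summand per multipath, carrying the value of the coefficient functor), and the differential $d_\mu$ is, on the summand of a multipath $M$, the signed sum over all ways of adjoining to $M$ a single new edge so that the result is again a multipath, of the corresponding coefficient map. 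On the other side, I would construct $X(\tG)$ as the simplicial complex whose vertex set is the edge set $E(\tG)$ of $\tG$ and in which a finite set $\sigma\subseteq E(\tG)$ spans a simplex exactly when the subgraph of $\tG$ that it determines is a multipath; since deleting edges from a disjoint union of directed simple paths again yields such a union, $X(\tG)$ is a simplicial complex, and $M\mapsto E(M)$ is a bijection from the multipaths of $\tG$ with $n$ edges onto the $(n-1)$-simplices of $X(\tG)$ — the empty multipath matching the empty simplex, placed in degree $-1$ of the augmented complex. This already identifies the graded $\bK$-modules underlying $C^n_\mu(\tG;\bK)$ and $\widetilde{C}^{\,n-1}(X(\tG);\bK)$, and accounts for the shift by one.

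The technical heart is to verify that this bijection carries $d_\mu$ to the simplicial coboundary $\delta$. Both are sums over the enlargements of an $n$-edge multipath (equivalently, an $(n-1)$-simplex) by one new edge (equivalently, one new vertex of $X(\tG)$); a term is present precisely when the enlarged edge set is again a multipath, i.e.\ a simplex of $X(\tG)$, and when present it is a signed identity $\pm\,\id\colon\bK\to\bK$. Two checks are needed: first, that the coefficient functor in the definition of $\Hmu$ assigns $\bK$ to every multipath and the identity to every covering relation, so that no nontrivial twist intervenes; and second, that the signs match. For the latter I would fix a single linear order on $E(\tG)$, use it to order the vertices of every simplex of $X(\tG)$ (thereby pinning down the simplicial incidence numbers), and check that the sign rule defining $d_\mu$ — which is expressed through an ordering of the edges of each individual multipath — reduces to exactly those incidence numbers. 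Granting both points, $C^\bullet_\mu(\tG;\bK)\cong\widetilde{C}^{\,\bullet-1}(X(\tG);\bK)$ as cochain complexes, whence $\Hmu^n(\tG;\bK)\cong\widetilde{\mathrm H}^{\,n-1}(X(\tG);\bK)$; and since reduced and unreduced cohomology of a nonempty simplicial complex differ only in degree $0$, and only by the free summand recorded by the augmentation, this is the asserted statement. As a sanity check, computing $X(\tA_n)$ directly recovers the groups of Theorem~\ref{thm:complete comp line digr}.

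The main obstacle I anticipate is the sign bookkeeping in the second step: reconciling the \emph{local} sign convention of the multipath differential — an ordering of the edges of each individual multipath, which may also interact with the orientations of its constituent paths — with the \emph{global} convention of simplicial cohomology, namely a single linear order on all of $E(\tG)$, and confirming along the way that the coefficient data contributes nothing beyond $\pm\,\id$. Everything else — the module-level bijection, the matching of which terms occur in each differential, and the passage to cohomology — is formal once both complexes are in the combinatorial form described above.
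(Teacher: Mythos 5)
Your proposal is correct and follows essentially the same route as the paper's proof of Theorem~\ref{thm:multipath is simplicial}: identify multipaths with $n$ edges with $(n-1)$-simplices of $X(\tG)$ (the empty multipath with the empty simplex of the augmented complex), match the two differentials term by term, and account for the degree shift. The sign bookkeeping you flag as the main obstacle is dispatched in the paper by observing that the simplicial incidence signs form a sign assignment on the face poset of $X(\tG)\cong P(\tG)$ and invoking the uniqueness of sign assignments up to isomorphism (Remark~\ref{thm:uni_signs}); equivalently, since $\Hmu^*$ is independent of the chosen sign assignment, one may simply take the one induced by a global linear order on $E(\tG)$ as in Example~\ref{exa:sign on Boolean}, which is exactly the simplicial incidence sign, so your anticipated reconciliation is immediate rather than delicate.
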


In virtue of this theorem, we reinterpret some of our
 combinatorial results -- 
for instance, we reprove a  Mayer-Vietoris theorem for multipath cohomology. 
Then, we turn to the question of which simplicial complexes arise as multipath complexes. Among others, we observe that all wedges of spheres of the same dimension can be obtained in this way (Example~\ref{ex:spheres}~and~Proposition~\ref{prop:wedgsofspheres}).
It remains open the problem of whether all wedges of spheres, or more complicated spaces, can be realised as $X(\tG)$ -- cf.~Questions~\ref{q:realisability1}~and~\ref{q:realisability2}.

\subsection*{On the combinatorial techniques}

The construction of multipath cohomology with arbitrary coefficients (algebras and bimodules) is quite abstract. One first associates to a digraph $\tG$ its path poset. Then, using the fact that every poset can be seen as a category, certain abstract categorical constructions can be used to define the multipath cohomology of~$\tG$. In this process, the combinatorial information provided by the graph is somehow obscured.
However, when restricting to coefficients in the base ring, instead of a general algebra, computations can be carried over the path poset. It becomes therefore useful to develop combinatorial tools to study {path} posets.
We rely on the gluing construction $\nabla$ (cf.~Definition~\ref{nabla}) to provide the description of a path poset in terms of simpler path posets (cf.~Theorem~\ref{finedimondo}). 
The decomposition in terms of the gluing construction $\nabla$, together with a Mayer-Vietoris type result for multipath cohomology,  provides useful acyclicity criteria (Criteria~\ref{crit:FDM} and~\ref{crit:istmo}).
These criteria can be applied to compute explicitly the cohomology of a number of graphs, see~Table~\ref{tab: homology computatin}.

\begin{table}[h]
\centering
	{
		\setlength{\extrarowheight}{17.5pt}%
		\begin{tabular}{C{4cm}|C{1.5cm}|C{1.5cm}|C{2.3cm}|C{3.5cm}}
			\raisebox{.2cm}{Digraph \tG} & \raisebox{.2cm}{$\mathrm{H}_{\mu}^0(\tG;\bK)$} &  \raisebox{.2cm}{$\mathrm{H}_{\mu}^1(\tG;\bK)$} & \raisebox{.2cm}{$\mathrm{H}_{\mu}^2(\tG;\bK)$} & \raisebox{.2cm}{$\mathrm{H}_{\mu}^i(\tG;\bK)$, $i>2$}\\
			\hline\hline

			\raisebox{-.15em}{%
				\begin{tikzpicture}[scale=0.6][baseline=(current bounding box.center)]
					\tikzstyle{point}=[circle,thick,draw=black,fill=black,inner sep=0pt,minimum width=2pt,minimum height=2pt]
					\tikzstyle{arc}=[shorten >= 8pt,shorten <= 8pt,->, thick]
					\node[above] (v0) at (0,0) {};
					\draw[fill] (0,0)  circle (.05);
					\node[above] (v1) at (1.5,0) {};
					\draw[fill] (1.5,0)  circle (.05);
					\node[] at (3,0) {\dots};
					\node[above] (v4) at (4.5,0) {};
					\draw[fill] (4.5,0)  circle (.05);
					\node[above] (v5) at (6,0) {};
					\draw[fill] (6,0)  circle (.05);
					\draw[thick, bunired, -latex] (0.15,0) -- (1.35,0);
					\draw[thick, bunired, -latex] (1.65,0) -- (2.5,0);
					\draw[thick, bunired, -latex] (3.4,0) -- (4.35,0);
					\draw[thick, bunired, -latex] (4.65,0) -- (5.85,0);
			\end{tikzpicture}} &  $0$ &  $0$ & $0$ & $0$ \\
			\raisebox{.25em}{%
				\begin{tikzpicture}[scale=0.5][baseline=(current bounding box.center)]
					\tikzstyle{point}=[circle,thick,draw=black,fill=black,inner sep=0pt,minimum width=2pt,minimum height=2pt]
					\tikzstyle{arc}=[shorten >= 8pt,shorten <= 8pt,->, thick]
					\node[above] (v0) at (0,1) {};
					\draw[fill] (0,1)  circle (.05);
					\node[above] (w0) at (4.5,1) {};
					\draw[fill] (4.5,1)  circle (.05);
					\node[above] (w1) at (6.0,1) {};
					\draw[fill] (6.0,1)  circle (.05);
					\node[above] (w2) at (7.5,1) {};
					\draw[fill] (7.5,1)  circle (.05);
					\node[above] (v1) at (1.5,1) {};
					\draw[fill] (1.5,1)  circle (.05);
					\node[above] (v2) at (3,1) {};
					\draw[fill] (3,1)  circle (.05);
					\draw[thick, bunired, -latex] (0.15,1) -- (1.35,1);
					\draw[thick, bunired, -latex] (2.85,1) -- (1.65,1);
					\draw[thick, bunired, -latex]  (3.15,1) -- (4.35,1) ;
					\draw[thick, bunired, -latex] (5.85,1) -- (4.65,1);
					\draw[thick, bunired, -latex]  (6.15,1) -- (7.35,1) ;
			\end{tikzpicture}} &  $0$ &  $0 $ & $\bK $ & $0$ \\
			\raisebox{-.55em}{%
				\begin{tikzpicture}[scale=0.35][baseline=(current bounding box.center)]
					\tikzstyle{point}=[circle,thick,draw=black,fill=black,inner sep=0pt,minimum width=2pt,minimum height=2pt]
					\tikzstyle{arc}=[shorten >= 8pt,shorten <= 8pt,->, thick]
					
					\node[above] (w0) at (0,-1) {};
					\draw[fill] (0,-1)  circle (.05);
					\node[above] (v0) at (0,1) {};
					\draw[fill] (0,1)  circle (.05);
					\node (e0) at (1.5,1) {$...$};
					\draw (1.5,1)  circle (.05);
					\node[above] (v1) at (1.5,0) {};
					\draw[fill] (1.5,0)  circle (.05);
					\node[above] (v2) at (3,1) {};
					\draw[fill] (3,1)  circle (.05);
					\node[above] (v3) at (3,-1) {};
					\draw[fill] (3,-1)  circle (.05);
					
					\draw[thick, bunired, -latex] (0.15,0.85) -- (1.35,0);
					\draw[thick, bunired, -latex] (0.15,-0.85) -- (1.35,0);
					\draw[thick, bunired, -latex] (2.85,0.95) -- (1.65,0.05);
					\draw[thick, bunired, -latex] (2.85,-0.95) -- (1.65,-0.05);
			\end{tikzpicture}} &  $0$ &  $\bK^{n-1}$ & $0$ & $0$ \\
	        
			\raisebox{-.55em}{%
				\begin{tikzpicture}[scale=0.35][baseline=(current bounding box.center)]
					\tikzstyle{point}=[circle,thick,draw=black,fill=black,inner sep=0pt,minimum width=2pt,minimum height=2pt]
					\tikzstyle{arc}=[shorten >= 8pt,shorten <= 8pt,->, thick]
					
					\node[above] (e0) at (0,0) {};
					\draw[fill] (0,0)  circle (.05);
					\node[above] (w0) at (0,-1) {};
					\draw[fill] (0,-1)  circle (.05);
					\node[above] (v0) at (0,1) {};
					\draw[fill] (0,1)  circle (.05);
					\node[above] (v1) at (1.5,0) {};
					\draw[fill] (1.5,0)  circle (.05);
					\node[above] (v2) at (3,1) {};
					\draw[fill] (3,1)  circle (.05);
					\node[above] (v3) at (3,-1) {};
					\draw[fill] (3,-1)  circle (.05);
					
					\node[left] at (-.65,0) {$n$};
					\node at (0,0.255) {$\vdots$};
					\node at (3,0.255) {$\vdots$};
					\node[right] at (3.55,0) {$m$};
					
					\draw[thick, bunired, -latex] (0.15,0) -- (1.0,0);

					\draw[thick, bunired, -latex] (0.15,0.85) -- (1.35,0);
					\draw[thick, bunired, -latex] (0.15,-0.85) -- (1.35,0);
					\draw[thick, bunired, latex-] (2.85,0.95) -- (1.65,0.05);
					\draw[thick, bunired, latex-] (2.85,-0.95) -- (1.65,-0.05);
			\end{tikzpicture}} &  $0$ &  $0$ & $\bK^{(n-1)(m-1)}$ & $0$ \\

%
					
			\raisebox{-1.5em}{%
					\begin{tikzpicture}[scale=0.3][baseline=(current bounding box.center)]
		\tikzstyle{point}=[circle,thick,draw=black,fill=black,inner sep=0pt,minimum width=2pt,minimum height=2pt]
		\tikzstyle{arc}=[shorten >= 8pt,shorten <= 8pt,->, thick]

		\node  (v0) at (0,0) {};
		\node[below] at (0,0) {};
		\draw[fill] (0,0)  circle (.05);
		\node  (v1) at (0,3) { };
		\node[above]  at (0,3) {};
		\draw[fill] (0,3)  circle (.05);
		\node  (v2) at (3,3) { };
		\node[above] at (3,3) {};
		\draw[fill] (3,3)  circle (.05);
		\node  (v3) at (3,0) { };
		\node[below]  at (3,0) {};
		\draw[fill] (3,0)  circle (.05);
		
		\draw[thick, bunired, -latex] (v0) -- (v1);
		\draw[thick, bunired, -latex] (v1) -- (v2);
		\draw[thick, bunired, -latex] (v2) -- (v3);
		\draw[thick, bunired, -latex] (v3) -- (v0);
		\draw[thick, bunired, -latex] (v1) -- (v3);
	\end{tikzpicture}} &  $0$ &  $0$ & $\bK$ & $\Hmu^{3}(\tG;\bK)\cong \bK$\\			
			\raisebox{-1.5em}{%
				\newdimen\R
\R=0.8cm
\begin{tikzpicture}
\draw[xshift=5.0\R, fill] (270:\R) circle(.03)  node[below] {};
\draw[xshift=5.0\R,fill] (225:\R) circle(.03)  node[below left]   {};
\draw[xshift=5.0\R,fill] (180:\R) circle(.03)  node[left] {};
\draw[xshift=5.0\R,fill] (135:\R) circle(.03)  node[above left] {};
\draw[xshift=5.0\R, fill] (90:\R) circle(.03)  node[above] {};
\draw[xshift=5.0\R,fill] (45:\R) circle(.03)  node[above right] {};
\draw[xshift=5.0\R,fill] (0:\R) circle(.03)  node[right] {};
\draw[xshift=5.0\R,fill] (315:\R) circle(.03)  node[below right] {};

\node[xshift=5.0\R] (v0) at (270:\R) { };
\node[xshift=5.0\R] (v1) at (225:\R) { };
\node[xshift=5.0\R] (v2) at (180:\R) { };
\node[xshift=5.0\R] (v3) at (135:\R) { };
\node[xshift=5.0\R] (v4) at (90:\R) { };
\node[xshift=5.0\R] (v5) at (45:\R) { };
\node[xshift=5.0\R] (v6) at (0:\R) { };
\node[xshift=5.0\R] (vn) at (315:\R) { };

\draw[thick, bunired, -latex] (v0)--(v1);
\draw[thick, bunired, -latex] (v1)--(v2);
\draw[thick, bunired, -latex] (v2)--(v3);
\draw[thick, bunired, -latex] (v3)--(v4);
\draw[thick, bunired, -latex] (v4)--(v5);
\draw[thick, bunired, -latex] (v5)--(v6);
\draw[thick, bunired, -latex] (vn)--(v0);

\draw[xshift=5.0\R, fill] (292.5:\R) node[below right] {};
\draw[xshift=5.0\R,fill] (247.5:\R) node[below left] {};
\draw[xshift=5.0\R,fill] (202.5:\R)   node[left] {};
\draw[xshift=5.0\R,fill] (157.5:\R)  node[above left] {};
\draw[xshift=5.0\R, fill] (112.5:\R)   node[above] {};
\draw[xshift=5.0\R,fill] (67.5:\R) node[above right] {};
\draw[xshift=5.0\R,fill] (22.5:\R) node[right] {};
\draw[xshift=4.95\R,fill] (337.5:\R)  node {$\cdot$} ;
\draw[xshift=4.95\R,fill] (333:\R)  node {$\cdot$} ;
\draw[xshift=4.95\R,fill] (342:\R)  node {$\cdot$} ;
\end{tikzpicture}} &  $0$ &  $0$ & $0 $ & $\Hmu^{n}(\tP_n;\bK)\cong \bK$ \\%
	\end{tabular}}
	\vspace{.5cm}
	\caption{Some digraphs and their respective multipath cohomologies.}
	\label{tab: homology computatin}
\end{table}

We also develop a deletion-contraction type result for multipath cohomology with coefficients in an algebra~$A$ (Theorem~\ref{thm:secremoveleaves}). This result allows us to give a recursive formula for the (graded) Euler characteristic of $\tA_n$, which shows how the complexity of multipath cohomology increases in case $A\neq R$, and to prove the analogue of \cite[Lemma~3.3]{Prz} -- see~Corollary~\ref{cor:homology linear graphs}. We conclude by noting that~Corollary~\ref{cor:homology linear graphs} can be proved, in case $A$ is commutative, by using the deletion-contraction long exact sequence in chromatic homology~\cite{HGRong}.

\subsection*{Conventions}

Typewriter font, e.g.~$\tG$, $\tH$, \emph{etc.}, are used to denote \emph{finite} graphs (both directed and unoriented). %
All base rings are assumed to be unital and commutative, and algebras are assumed to be associative. Unless otherwise stated, $R$ denotes a principal ideal domain, $\bK$ is a field, $A$ is a unital $R$-algebra, and all tensor products  $\otimes$ are assumed to be over the base ring~$R$. 
Given a cochain complex~$C^*$, we denote by $C^*[i]$ the shifted complex~$C^{*+i}$. General references for graph theory, algebra, and algebraic topology are \cite{West}, \cite{LangAlg}, and \cite{Kozlov}, respectively.

\subsection*{Acknowledgements}
The authors extend their felicitations to {\tt I.C.}
LC acknowledges support from the \'{E}cole Polytechnique F\'{e}d\'{e}rale de Lausanne via a collaboration agreement with the University of Aberdeen.
SDT was partially supported by GNSAGA - INDAM group during the writing of this paper. During the writing of this paper, CC was a postdoc at NYUAD. LC and CC acknowledge partial support from the Heilbronn Small Grants Scheme. 
All the authors wish to warmly thank the anonymous referee for their comments, which helped to improve the quality of the paper.

\section{Basic Notions}\label{sec:notions}

In this section,  we review some basic notions on (directed) graphs and posets, and recall the construction of multipath cohomology.  Then, we specialise the general construction to the case of multipath cohomology with  coefficients in a field~$\bK$ -- \emph{en lieu} of a general algebra~$A$. 

\subsection{Digraphs and posets}
Recall that a \emph{directed graph}~$\tG$, often shortened to \emph{digraph},  is a pair of finite sets $(V(\tG),E(\tG))$, called \emph{vertices} and \emph{edges}, where $E(\tG) \subseteq \{ V(\tG)\times V(\tG) \setminus \{ (v,v)\ |\ v\in V(\tG) \}$. %
Unless otherwise stated, we will refer to digraphs, simply, as graphs. 
When dealing with (un)directed graphs, i.e.~graphs for which the edges are not oriented, the adjective ``\emph{(un)oriented}'' will be explicitly stated. The forthcoming definitions for digraphs apply verbatim to unoriented graphs by discarding the orientation of the edges. 
Note that two vertices $v$ and $w$ in a digraph {can} share at most two edges: $(v,w)$ and $(w,v)$. 

By definition, an edge of a digraph is an ordered set of two distinct vertices, say $e =(v,w)$. The vertex $v$ is called  the \emph{source} of $e$, while the vertex $w$ is called the \emph{target} of $e$. The source and target of an edge $e$ will be denoted by $s(e)$ and $t(e)$, respectively. If a vertex $v$ is {either} a source or a target of an edge $e$, we will say that \emph{$e$ is incident to $v$}. Furthermore, we say that $v \in V(\tG)$ is a \emph{sink} (resp.~a \emph{source}) if for every $e\in E(\tG)$ incident to $v$ we have $v = t(e)$ (resp.~$v = s(e)$). 
Finally, a digraph $\tG$ with $n$ edges is a \emph{sink} (resp. a \emph{source}) \emph{over $n$ vertices} if it has a unique sink (resp.~source), and every edge in $\tG$ is incident to it.

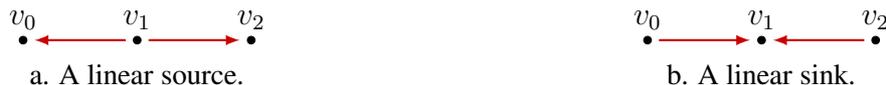
\begin{figure}[h]
	\centering
	\begin{subfigure}[b]{0.4\textwidth}
		\centering
		\begin{tikzpicture}[baseline=(current bounding box.center)]
		\tikzstyle{point}=[circle,thick,draw=black,fill=black,inner sep=0pt,minimum width=2pt,minimum height=2pt]
		\tikzstyle{arc}=[shorten >= 8pt,shorten <= 8pt,->, thick]
		
		\node[above] (v0) at (0,1) {$v_0$};
		\draw[fill] (0,1)  circle (.05);
		\node[above] (v1) at (1.5,1) {$v_1$};
		\draw[fill] (1.5,1)  circle (.05);
		\node[above] (v2) at (3,1) {$v_{2}$};
		\draw[fill] (3,1)  circle (.05);

		\draw[thick, bunired, -latex] (1.35,1) -- (0.15,1);
		\draw[thick, bunired, -latex] (1.65,1) -- (2.85,1);
	\end{tikzpicture}
\caption{A linear source. }\label{fig:source}
\end{subfigure}
\hspace{0.1\textwidth}
	\begin{subfigure}[b]{0.4\textwidth}
		\centering
		\begin{tikzpicture}[baseline=(current bounding box.center)]
		\tikzstyle{point}=[circle,thick,draw=black,fill=black,inner sep=0pt,minimum width=2pt,minimum height=2pt]
		\tikzstyle{arc}=[shorten >= 8pt,shorten <= 8pt,->, thick]
		
		\node[above] (v0) at (0,1) {$v_0$};
		\draw[fill] (0,1)  circle (.05);
		\node[above] (v1) at (1.5,1) {$v_1$};
		\draw[fill] (1.5,1)  circle (.05);
		\node[above] (v2) at (3,1) {$v_{2}$};
		\draw[fill] (3,1)  circle (.05);

		\draw[thick, bunired, -latex] (0.15,1) -- (1.35,1);
		\draw[thick, bunired, -latex] (2.85,1) -- (1.65,1);
	\end{tikzpicture}
\caption{A linear sink. }\label{fig:sink}
\end{subfigure}
	\caption{Linear source and sink.}\label{fig:nnstep}
\end{figure}

A \emph{morphism of digraphs} from  $\tG_1$ to  $\tG_2$ is a function $\phi\colon V(\tG_1)\to V(\tG_2)$ such that:
\[ e = (v,w) \in E(\tG_1)\ \Longrightarrow\ \phi (e) \coloneqq (\phi(v),\phi(w)) \in E(\tG_2)\ .\]
 A morphism of digraphs is called \emph{regular} if it is injective as a function.

A \emph{sub-graph} \tH of a graph \tG is a graph such that $V(\tH)\subseteq V(\tG)$ and $E(\tH)\subseteq E(\tG)$, and in such case we  write $\tH \leq \tG$.
If $\tH \leq \tG$ and $\tH\neq \tG$ we say that \tH is a \emph{proper sub-graph} of $\tG$, and we write~$\tH< \tG$.

\begin{defn}
 If $\tH\leq \tG$ and $V(\tH) = V(\tG)$ we  say that \tH is a \emph{spanning sub-graph} of $\tG$.
\end{defn}

Given a {proper} spanning sub-graph $\tH < \tG$, we can find an edge $e\in E(\tG)\setminus E(\tH)$. 
 The spanning sub-graph of \tG {obtained from \tH by adding an edge $e$}  is simply denoted {by} $\tH\cup e$.
~\newline

We now review some basic notions about partially ordered sets.
A \emph{partially ordered set}, or simply \emph{poset}, is a pair $(S,\triangleleft)$ consisting of a set  $S$ and a partial order $\triangleleft$ on $S$. A morphism of posets $f\colon (S,\triangleleft )\to (S',\triangleleft')$ is a strictly monotone map of sets. 

\begin{example}
The \emph{standard Boolean poset} $\mathbb{B}(n)$ (of size $2^{n}$) is the poset~$(\wp(\{ 0, ... , n -1\}),\subset)$, where $\wp$ denotes the power set -- i.e.~the set of all subsets.
A poset is called a \emph{Boolean poset}, if it is isomorphic to the standard Boolean poset $\mathbb{B}(n)$, for some $n$. 
\end{example}
\begin{example}
Let $\tG$ be a digraph with $n$ edges. The poset~$(SSG(\tG),<)$ of \emph{spanning subgraphs} of $\tG$ is given by all the spanning subgraphs of $\tG$ with order relation given by the property of being a subgraph. The associated covering relation~$\prec$ can be described as follows:
\[
\tH \prec \tH' \Longleftrightarrow \exists \ e\in E(\tH')\setminus E(\tH) : \tH'= \tH\cup e \ .
\]
Then, $(SSG(\tG),<)$ is a Boolean poset isomorphic to $\mathbb{B}(n)$  -- see also \cite[Example~2.14]{primo}.
\end{example}

Given a partial order $\triangleleft$ on a set~$S$, there is an associated \emph{covering relation}  $\widetilde{\triangleleft}$, given by $x\  \widetilde{\triangleleft}\ y$ if, and only if, $x \triangleleft y$ and there is no $z$ such that $x\triangleleft z$, $z\triangleleft y$.
In order to visually represent posets associated to digraphs, we use covering relations and the associated Hasse graphs.
Recall that the \emph{Hasse graph}~$\Hasse (S,\triangleleft)$ of a poset $(S,\triangleleft)$ is the graph whose vertices are the elements of $S$ and such that $(x,y)$ is an edge if, and only if, $x\,\widetilde{\triangleleft}\, y$.
Each morphism of digraphs $\phi\colon\Hasse(S,\triangleleft)\to \Hasse(S',\triangleleft') $ induces a morphism of posets $\phi\colon(S,\triangleleft)\to (S',\triangleleft')$. We remark that not all morphisms of posets arise this way.
Recall also that, given a poset $(S,\triangleleft)$,  a  \emph{sub-poset} is a subset $S'\subseteq S$ with the order relation $\triangleleft_{| S'\times S'}$ induced by~$\triangleleft$.

\begin{defn}\label{def:hereditary}
 A sub-poset $(S',\triangleleft_{| S'\times S'})$ is called \emph{downward closed} with respect to $(S,\triangleleft)$, if whenever $h\triangleleft h'$ and $h'\in S'$, then $h\in S'$.
\end{defn}

Essential to the construction of multipath cohomology are the following properties:

\begin{defn}\label{def:faith and squre}
Let $(S,\triangleleft)$ be a poset and $(S',\triangleleft_{| S'\times S'})$ be a sub-poset of $(S,\triangleleft)$.
\begin{enumerate}
	\item\label{def:faith_label1} We say that $(S,\triangleleft)$ is \emph{squared} if for each triple $x,y,z\in S'$ such that $z$ covers $y$ and $y$ covers $x$, there is a unique $y'\neq y$ such that $z$ covers $y'$ and $y'$ covers $x$. Such elements $x$, $y$, $y'$, and $z$ will be called a \emph{square} in $S$.
\item We say that $(S',\triangleleft_{| S'\times S'})$ is \emph{faithful} if the covering relation in $S'$ induced by $\triangleleft_{| S'\times S'}$ is the restriction of the covering relation in $S$ induced by $\triangleleft$;
\end{enumerate}
\end{defn}

Note that Boolean posets are squared and that the property of being squared or faithful is preserved under intersections \cite[Proposition 2.21]{primo}.
Furthermore, downward  closed sub-posets are faithful, and each downward  closed sub-poset of a squared poset is also squared.

If $(S,\triangleleft_S)$ and $(S',\triangleleft_{S'})$ are posets, then  their \emph{product poset} is the  set $S \times S'$ with the relation 
\begin{equation}\label{eq:product posets}
    (x_1,x_2)\triangleleft_{S\times S'}(y_1,y_2) \Longleftrightarrow (x_1,x_2)=(y_1,y_2) \mbox{ or } x_1 \triangleleft_S y_1  \mbox{ and } x_2 \triangleleft_{S'} y_2. 
\end{equation}
{The definition of product poset is essential to introduce the cone of a poset. Let $P$ be a poset.
\begin{defn}\label{SqCone}  
The \emph{cone of $P$}, denoted by $\Cone P$, is the product poset 
$P \times \mathbb{B}(1)$. 
\end{defn}
}
As $\mathbb{B}(1)$ is (isomorphic to) the poset on the set $\{0,1\}$ with the relation $0<1$, the cone $\Cone P$ can also be seen as $P \times \{0,1\}$.
The covering relation in $\Cone P$ can be explicitly described; an element $(a,i)$ is covered by $(b,j)$ if and only if $i=j$ and $a \triangleleft b$ or $i<j$ and $a=b$.

\subsection{Path posets}
We introduce one of the main tools in the definition of multipath cohomology of directed graphs, the \emph{path poset} associated to a directed graph~$\tG$.
By a \emph{simple path} in \tG  we mean a sequence of edges $e_1,...,e_n$ of \tG such that~$s(e_{i+1})=t(e_i)$ for $i=1,\dots,n-1$, and no vertex is encountered twice, i.e.~if $s(e_i) = s(e_j)$ or $t(e_i) = t(e_j)$, then $i=j$, and is not a cycle, i.e.~$s(e_1)\neq t(e_n)$. 
A connected component of \tG is a sub-graph $\tH$ of \tG whose geometric realisation (as CW-complex) $|\tH|$ is connected.
Following \cite{turner}, a \emph{multipath} of $\tG$ is a spanning sub-graph such that each connected component is either a vertex or its edges admit an ordering such that it is a simple path.
The set of multipaths of $\tG$ has a natural partially ordered structure:

\begin{defn}\label{def:pathposet}
	The \emph{path poset} of $\tG$ is the poset $(P(\tG),<)$ associated to $\tG$, that is, the set of multipaths of \tG ordered by the relation of ``being a sub-graph''. 
\end{defn}

Observe that the order relation makes sense, because each multipath is a subgraph of $\tG$.

\begin{rem}\label{rem: PG squared and faithful}
The path poset~$P(\tG)$ is a downward closed subposet of a Boolean poset -- the Boolean poset of all spanning subgraphs. Therefore, it is faithful and squared. 
\end{rem}

When the partial order on $P(\tG)$ is not specified, we will always implicitly assume it to be the order relation~$<$.  Moreover, with abuse of notation, we will also  write $P(\tG)$ instead of  $(P(\tG),<)$.
We now provide some examples of path posets -- cf.~\cite[Section~2]{primo}.

\begin{example}\label{ex:Pn}
Consider the coherently oriented linear graph $\tI_n$ with $n$ edges -- Figure~\ref{fig:nstep}. 
Then, $(P(\tI_n), < )$ is isomorphic to the Boolean poset~$\mathbb{B}(n)$.
Let $\tP_n$ be the coherently oriented polygonal graph with $n+1$ edges -- Figure~\ref{fig:poly}. Then, $(P(\tP_n) ,<)$, is isomorphic to the Boolean poset~$\mathbb{B}(n)$ minus its maximum.
\begin{figure}[h]
	\begin{tikzpicture}[baseline=(current bounding box.center)]
		\tikzstyle{point}=[circle,thick,draw=black,fill=black,inner sep=0pt,minimum width=2pt,minimum height=2pt]
		\tikzstyle{arc}=[shorten >= 8pt,shorten <= 8pt,->, thick]
		
		\node[above] (v0) at (0,0) {$v_0$};
		\draw[fill] (0,0)  circle (.05);
		\node[above] (v1) at (1.5,0) {$v_1$};
		\draw[fill] (1.5,0)  circle (.05);
		\node[] at (3,0) {\dots};
		\node[above] (v4) at (4.5,0) {$v_{n-1}$};
		\draw[fill] (4.5,0)  circle (.05);
		\node[above] (v5) at (6,0) {$v_{n}$};
		\draw[fill] (6,0)  circle (.05);
		
		\draw[thick, bunired, -latex] (0.15,0) -- (1.35,0);
		\draw[thick, bunired, -latex] (1.65,0) -- (2.5,0);
		\draw[thick, bunired, -latex] (3.4,0) -- (4.35,0);
		\draw[thick, bunired, -latex] (4.65,0) -- (5.85,0);
	\end{tikzpicture}
	\caption{The coherently oriented linear graph $\tI_n$.}
	\label{fig:nstep}
\end{figure}
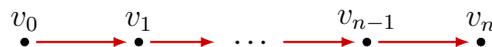
%
\begin{figure}[h]
\newdimen\R
\R=2.0cm
\begin{tikzpicture}
\draw[xshift=5.0\R, fill] (270:\R) circle(.05)  node[below] {$v_0$};
\draw[xshift=5.0\R,fill] (225:\R) circle(.05)  node[below left]   {$v_1$};
\draw[xshift=5.0\R,fill] (180:\R) circle(.05)  node[left] {$v_2$};
\draw[xshift=5.0\R,fill] (135:\R) circle(.05)  node[above left] {$v_3$};
\draw[xshift=5.0\R, fill] (90:\R) circle(.05)  node[above] {$v_4$};
\draw[xshift=5.0\R,fill] (45:\R) circle(.05)  node[above right] {$v_5$};
\draw[xshift=5.0\R,fill] (0:\R) circle(.05)  node[right] {$v_6$};
\draw[xshift=5.0\R,fill] (315:\R) circle(.05)  node[below right] {$v_{n}$};

\node[xshift=5.0\R] (v0) at (270:\R) { };
\node[xshift=5.0\R] (v1) at (225:\R) { };
\node[xshift=5.0\R] (v2) at (180:\R) { };
\node[xshift=5.0\R] (v3) at (135:\R) { };
\node[xshift=5.0\R] (v4) at (90:\R) { };
\node[xshift=5.0\R] (v5) at (45:\R) { };
\node[xshift=5.0\R] (v6) at (0:\R) { };
\node[xshift=5.0\R] (vn) at (315:\R) { };

\draw[thick, bunired, -latex] (v0)--(v1);
\draw[thick, bunired, -latex] (v1)--(v2);
\draw[thick, bunired, -latex] (v2)--(v3);
\draw[thick, bunired, -latex] (v3)--(v4);
\draw[thick, bunired, -latex] (v4)--(v5);
\draw[thick, bunired, -latex] (v5)--(v6);
\draw[thick, bunired, -latex] (vn)--(v0);

\draw[xshift=5.0\R, fill] (292.5:\R) node[below right] {$e_{n}$};
\draw[xshift=5.0\R,fill] (247.5:\R) node[below left] {$e_0$};
\draw[xshift=5.0\R,fill] (202.5:\R)   node[left] {$e_1$};
\draw[xshift=5.0\R,fill] (157.5:\R)  node[above left] {$e_2$};
\draw[xshift=5.0\R, fill] (112.5:\R)   node[above] {$e_3$};
\draw[xshift=5.0\R,fill] (67.5:\R) node[above right] {$e_4$};
\draw[xshift=5.0\R,fill] (22.5:\R) node[right] {$e_5$};
\draw[xshift=4.95\R,fill] (337.5:\R)  node {$\cdot$} ;
\draw[xshift=4.95\R,fill] (333:\R)  node {$\cdot$} ;
\draw[xshift=4.95\R,fill] (342:\R)  node {$\cdot$} ;
\end{tikzpicture}
\caption{The coherently oriented polygonal graph $\tP_n$.}
\label{fig:poly}
\end{figure}
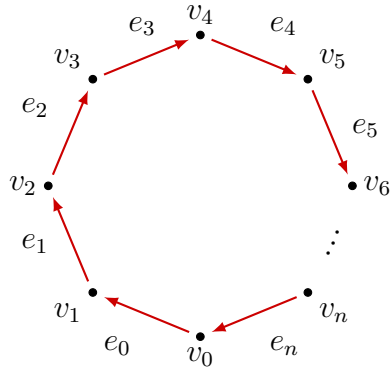
\end{example}

\subsection{Multipath cohomology}\label{sec:digraph_hom}

Given a special type of poset coherently assigned to each digraph, and a choice of a sign assignment on it  (see Definition~\ref{def:sign_ass}), one can define a cohomology theory for directed graphs -- see~\cite{primo}. 
Let $\bZ_2$ be the cyclic group on two elements.

\begin{defn}\label{def:sign_ass}
	A \emph{sign assignment} on a poset $(S,\triangleleft)$ is an assignment of elements $\epsilon_{x,y}\in \bZ_2$ to each pair of elements $x,y\in S$ with $x\ \widetilde{\triangleleft}\ y$, such that the equation
	\begin{equation}\label{eq:signassign}
		\epsilon_{x,y} + \epsilon_{y,z} \equiv \epsilon_{x,y'} + \epsilon_{y',z} + 1 \mod 2
	\end{equation}
	holds for each square  $x\ \widetilde{\triangleleft}\ y, y'\ \widetilde{\triangleleft}\ z$.
\end{defn}


\emph{A priori}, the existence of a sign assignment on a given poset is not clear. For a cohomological sufficient condition for a sign assignment on a given poset see~\cite[Section~3.2]{primo}. For the poset of spanning sub-graphs -- or, better, for any Boolean poset -- and their sub-posets, a sign assignment can be easily described -- see \cite{Khovanov}. 
More generally, one may ask when two sign assignments on a given poset are isomorphic. We refrain here from giving the definition of isomorphism of sign assignments -- cf.~\cite[Definition~3.13]{primo} -- but in cases of interest to us, all sign assignments are isomorphic -- cf.~\cite[Theorem~3.16 \& Corollary~3.17]{primo}.


Recall that the \emph{length} of a graph $\tG$, denoted by $\lgt(\tG)$,  is the number of edges in $\tG$.

\begin{defn}\label{def:ell}
Let $P$ be a finite poset with a minimum $m$. The \emph{level} $\ell(x)$ of $x\in P$ is the  minimal length among all simple paths joining $x$ to the minimum in $\Hasse(P)$.
\end{defn}

If $P= P(\tG)$ 
is the path poset of a graph,
then the level and the length coincide. 
More generally, if $P$ is a faithful sub-poset of $SS(\tG)$,  the notion of level of an element $\tH \in P$ can be extended as follows:
\[ \ell (\tH) = \# E(\tH) + \# V(\tH) - \min \{ \# E(\tH') + \# V(\tH') \mid \tH'\in P \} \ . \]

For the rest of the section $R$ denotes a commutative ring with identity, and $A$ an associative unital $R$-algebra.
An \emph{ordered digraph} is a digraph with a fixed ordering of the vertices.

Let $\tG$ be an ordered graph and let $v_0\in V(\tG)$ be the  minimum with respect to the given ordering.
Given a multipath $\tH<\tG$, to each connected component of $\tH$ we associate a copy of~$A$. Then we take the ordered tensor product. More concretely, if $c_0<\dots<c_k$ is the set of ordered connected components of $\tH$, we define: 
\begin{equation}\label{eq:fun_obj}
	\mathcal{F}_{A}(\tH)\coloneqq {A_{c_{0}}}  \otimes_R \cdots 
	\otimes_R  A_{c_k}\ ,\end{equation}
where  all the modules are labelled  by the respective component. 

Assume $\tH' = \tH\cup e$. We define the source~$s(e,\tH)$ (resp.~target~$t(e,\tH)$) of $e$ in $\tH$ as the index of the connected component of $\tH$ containing the source (resp.~target) of $e$.
Denote by $c_0$,...,$c_{k}$ the ordered components of $\tH$,  denote by  $c'_0$,...,$c'_{k-1}$ the ordered components of $\tH'$, and assume that the addition of $e$ merges $c_i$ and $c_j$. Then, for each $h=0,...,k-1$, there is a natural identification
\begin{equation}\label{eq:identification_components}
	c'_h = \begin{cases} c_h & \text{if } 0\leq h<i \text{ or } i< h < j;\\ c_i \cup e \cup c_j &\text{if }  h =i; \\ c_{h+1} &\text{if }  j\leq h<k.\end{cases}
\end{equation}
for some $0\leq i < j \leq k$. Using this  identification,
 we  define
$ \mu_{\tH\prec \tH'}\colon \mathcal{F}_{A}(\tH) \longrightarrow \mathcal{F}_{A}(\tH')$
 as 
\[ \mu_{\tH\prec \tH'}(a_0 \otimes \cdots \otimes a_k) =  
a_{0} \otimes \cdots \otimes a_{s(e,\tH)-1} \otimes a_{s(e,\tH)}\cdot a_{t(e,\tH)} \otimes a_{s(e,\tH)+1} \otimes \cdots \otimes \widehat{a_{t(e,\tH)}} \otimes \cdots \otimes a_{k-1} \otimes a_{k} 
	\]
where $ \widehat{a_{t(e,\tH)}} $ indicates that $a_{t(e,\tH)}$ is missing. 
Let $\epsilon$ be a sign assignment on $P(\tG)$.
We can now define the cochain groups
\[ C^n_{\mu}(\tG;A) \coloneqq \bigoplus_{\tiny\begin{matrix}
		\tH\in P(\tG)\\
		\ell(\tH) = n
\end{matrix}}  {\mathcal{F}_A(\tH)}
\]
together with the differential
\[d^{n}=d^{n}_{\mu} \coloneqq \sum_{\tiny\begin{matrix}
		\tH\in P(\tG)\\
		\ell(\tH) = n
\end{matrix}} \sum_{\tiny\begin{matrix}
		\tH'\in P(\tG)\\
		\tH \prec\tH'
\end{matrix}} (-1)^{ \epsilon(\tH,\tH')}\mu(\tH\prec \tH') \ .\]
 It has been proved that
 $(C_\mu^*(\tG;A),d^*)$  is a cochain complex \cite[Theorem~4.10]{primo}. Furthermore, the path poset $P(\tG)$ is squared and faithful by Remark~\ref{rem: PG squared and faithful}, hence the homology groups of $(C_\mu^*(\tG;A),d^*)$ do not depend on the sign assignment~$\epsilon$ used in the definition of $(C_\mu^*(\tG;A),d^*)$ and on the choice of the ordering on $V(\tG)$ \cite[Corollary~3.18 \& Proposition~4.11]{primo}. We are ready to give the definition of the multipath cohomology of a directed graph:
 
 \begin{defn}\label{def:multipathhom}
 The \emph{multipath cohomology}  $\mathrm{H}_{\mu}^*(\tG; A)$ of a digraph~$\tG$ with with coefficients in an algebra $A$ is the homology  of the cochain complex $(C^{*}_{\mu}(\tG; A),d^*)$. 
\end{defn}

Observe that, when $A$ is the ring~$R$, the tensor products in Equation~\eqref{eq:fun_obj} simply give $\mathcal{F}_{R}(\tH)= {R_{c_{0}}}  \otimes_R \cdots 
\otimes_R  R_{c_k}\cong R$, for each  multipath~$\tH<\tG${. The isomorphism between the tensor powers of $R$ and $R$ itself is given by multiplication} $\mu$, therefore the differential can be written as
\[d^{n} =\sum_{\tiny\begin{matrix}
		\tH\in P(\tG)\\
		\ell(\tH) = n
\end{matrix}} \sum_{\tiny\begin{matrix}
		\tH'\in P(\tG)\\
		\tH \prec\tH'
\end{matrix}} (-1)^{ \epsilon(\tH,\tH')} \mathrm{Id}_{\tH\prec \tH'} \ .\]
Identifying $\mathcal{F}_{R}(\tH)$ with a copy of $R$ gives us a a set of linearly independent generators $\{ b_{\tH} \}_{\tH}$ (as free $R$-module) for $C_\mu^*(\tG;R)$ indexed by multipaths. 

If $\phi\colon \tG_1 \to \tG_2$ is a regular morphism of digraphs, then it induces (functorially) a morphism of posets $P\phi\colon P({\tG_1}) \rightarrow P({\tG_2})$, as it sends multipaths of $\tG_1$ to multipaths of $\tG_2$. We obtain a (controvariant) morphism of cochain complexes 
$\phi^*\colon C_\mu^*(\tG_2;A) \to C_\mu^*(\tG_1;A)$
where we fixed a sign assignment on $P({\tG_2})$ and we considered the sign on $P({\tG_1})$ by restriction.

We conclude the section with the computation of the multipath cohomology of the coherently oriented linear graph;

\begin{example}\label{ex:In}
Consider the coherently oriented linear graph $\tI_n$ of length~$n$, illustrated in Figure~\ref{fig:nstep}.
Then, 
its multipath cohomology~$\Hmu^*(\tI_n; \bK)$ is trivial \cite[Example~4.20]{primo}.
\end{example}

\section{A Combinatorial Description of Path Posets}
\label{sec:comb_path_poset}

The aim of this section is to give a combinatorial description of the path poset associated to a directed graph $\tG$; we show that the path poset can be constructed by gluing together simpler path posets associated to (suitable)  subgraphs of $\tG$. 
In the follow up, we will deal with disconnected graphs. A  straightforward observation is that the multipath cohomology of disconnected graphs is the tensor product of the multipath cohomologies:

\begin{rem}\label{DisjointUnion}
 Let $\tG$ be the disjoint union of connected digraphs $\tG_1, \dots, \tG_n$.  Then, the path poset~$P(\tG)$ is the product $ P({\tG_1}) \times \dots \times P({\tG_n})$  -- cf.~Equation~\eqref{eq:product posets}. Hence, the multipath cohomology of $\tG$ splits as the graded tensor product:
 \[\Hmu^*(\tG;\bK)=\Hmu^*(\tG_1;\bK) \otimes \dots \otimes \Hmu^*(\tG_n;\bK) \ . \]
In particular, if $\Hmu^*(\tG_i;\bK)=0$ for  some $i \in \{1, \dots , n\}$, then $\Hmu^*(\tG;\bK)=0$.
\end{rem}

We introduce a gluing operation for directed graphs.

\begin{defn}[Gluing]\label{nabla} 
Let $\tG, \tG_1, \tG_2$ be digraphs, and $\imath_1\colon\tG \to \tG_1$ and $\imath_2\colon\tG \to \tG_2$ be  regular morphisms.
The \emph{gluing} of $\tG_1$ and $\tG_2$ along $\tG$ is the digraph~$\nabla_{\tG} (\tG_1, \tG_2)$ defined as follows: 
\begin{enumerate}
    \item $V(\nabla_\tG (\tG_1, \tG_2)) \coloneqq V(\tG_1) \sqcup V(\tG_2) /  \sim$, where $x \sim y$ if, and only if, either $x=y$ or $x\in \imath_1(\tG)$, $y\in \imath_2(\tG)$, and $\imath_1^{-1}(x)=\imath_2^{-1}(y)$;
    \item $([v],[w])\in E(\nabla_\tG (\tG_1, \tG_2))$ if, and only if, there exist $ v'\in [v]$, $ w'\in [w]$, and $i\in \{1,2\}$ such that $(v',w') \in E(\tG_i) $, where $[\cdot]$ denotes an equivalence class with respect to $\sim$.
\end{enumerate}
Roughly speaking, $\nabla_{\tG} (\tG_1, \tG_2)$ is the graph obtained from $\tG_1$ and $\tG_2$ by identifying the vertices and edges belonging to the image of $\tG$.
\end{defn}

When clear from the context and for ease of notation, we denote the edge $([v],[w])$ in  the set~$E(\nabla_\tG (\tG_1, \tG_2))$ as $(v,w)$.
For a given graph $\tG$, the operation $\nabla_{\tG}(-,-)$  is commutative and associative up to isomorphism of digraphs.
Let $\mathbf{Digraph}$ be the category of digraphs and regular morphisms of digraphs. We can reinterpret the gluing as a categorical pushout:

\begin{rem}\label{rem:nabla is pushout}
The operation $\nabla_{\tG}(-,-)$ is the categorical push-out -- cf.~\cite[Section~III.3]{maclane:71} -- in the category~$\mathbf{Digraph}$. 
Since $\nabla_\tG (\tG_1, \tG_2)$ is an object of $\mathbf{Digraph}$, and since the inclusions of $\tG_1$ and $\tG_2$ in $\nabla_\tG (\tG_1, \tG_2)$ are regular morphisms of digraphs, we have a commutative square
\[\begin{tikzcd}
 \tG \arrow[r, "\imath_1"] \arrow[d, "\imath_2"'] & \tG_1 \arrow[d, "\jmath_1"]\\
\tG_2\arrow[r, "\jmath_2"] & \nabla_\tG (\tG_1, \tG_2) \end{tikzcd}\]
	in $\mathbf{Digraph}$. 
Note that~$V(\nabla_\tG (\tG_1, \tG_2))$ is  the push-out of $V(\tG_1)$ and $V(\tG_2)$ along $V(\tG)$ in the category~$\mathbf{Set}$ of sets.
Now, given another digraph $\tG'$ such that the square
	\[\begin{tikzcd}
 \tG \arrow[r, "\imath_1"] \arrow[d, "\imath_2"'] & \tG_1 \arrow[d, "\jmath_1' "]\\
\tG_2\arrow[r, "\jmath_2' "] & \tG'
	\end{tikzcd}\]
commutes, then we get a function $V(\nabla_\tG (\tG_1, \tG_2))\to V(\tG')$ since $V(\nabla_\tG (\tG_1, \tG_2))$ is a push-out in~$\mathbf{Set}$. Such a function extends to a map of digraphs by definition of morphism of digraphs, which is injective as it is composition of injective functions. 
\end{rem}

If $\tt{G}'$ is a digraph, and 
$\imath\colon \tG' \rightarrow \nabla_{\tG} (\tG_1, \tG_2)$ and $\jmath\colon\tG' \rightarrow \tG_3$ are regular morphisms, we define:
\[\nabla_{\tG',\tG} (\tG_1, \tG_2, \tG_3)\coloneqq \nabla_{\tG'}(\nabla_{\tG} (\tG_1, \tG_2),\tG_3) \ . \]
In general, if $\tG_1, \dots, \tG_n$ is a family of digraphs such that for every $k<n$ there exist a (regular) morphisms of digraphs $\imath_k\colon \tG \rightarrow \tG_k$ we will denote $\nabla_{\tG,\dots, \tG} (\tG_1, \dots , \tG_n)$ by  $\nabla_{\tG} (\tG_1, \dots  \tG_n)$.
Note that $\nabla_{\tG} (\tG_1, \dots  \tG_n)$ does not depend, up to isomorphism of digraphs, on the order of the digraphs $\tG_1, \dots  ,\tG_n$, whereas $\nabla_{\tG',\tG} (\tG_1, \tG_2, \tG_3)$ might.

\begin{defn}
The gluing of two posets $P_1$ and $P_2$ along a common subposet~$P$ is the poset, denoted by $\nabla_{P}(P_1,P_2)$, whose Hasse diagram is the gluing of $\Hasse(P_1)$ and $\Hasse(P_2)$ along~$\Hasse(P)$.
\end{defn}

 Observe that the gluing does not commute with the operation of taking path posets. To see it, let $\tG_1$ be the graph \raisebox{-.2em}{\begin{tikzpicture}
\node (a) at (0,0) {}  ;
\node[above] at (a) {$v_0$};
\draw[fill] (a)  circle (.05);

\node (b) at (1,0) {} ;
\node[above] at (b) {$v_1$};

\draw[fill] (b)  circle (.05);

\node (c) at (2,0) {};
\node[above] at (c) {$v_2$};
\draw[fill] (c)  circle (.05);

\draw[thick, -latex, bunired] (a) -- (b);

\draw[thick, -latex, bunired] (b) -- (c);

\end{tikzpicture}}
 and $\tG_2$ the graph
 \raisebox{-.2em}{\begin{tikzpicture}
\node (a) at (0,0) {} circle (.05);
\node[above] at (a) {$v_1$};
    \draw[fill] (a) circle (.05);

\node (b) at (1,0) {} circle (.05);
\node[above] at (b) {$v_2$};
    \draw[fill] (b) circle (.05);

\node (c) at (2,0) {} circle (.05);
\node[above] at (c) {$v_3$};
    \draw[fill] (c) circle (.05);

\draw[thick, -latex, bunired] (a) -- (b);

\draw[thick, -latex, bunired] (b) -- (c);
\end{tikzpicture}}
 and consider the gluing~$\tG$ of $\tG_1$ and $\tG_2$ over  \raisebox{-.2em}{\begin{tikzpicture}
\node (a) at (0,0) {} circle (.05);
\node[above] at (a) {$v_1$};
    \draw[fill] (a) circle (.05);

\node (b) at (1,0) {} circle (.05);
\node[above] at (b) {$v_2$};
    \draw[fill] (b) circle (.05);

\draw[thick, -latex, bunired] (a) -- (b);
\end{tikzpicture}}\hspace{-.1em}. Then, the poset~$P(\tG)$ is isomorphic to the Boolean poset~$\mathbb{B}(3)$, whose Hasse diagram is (the $1$-skeleton of) a $3$-dimensional cube. On the other hand, the Hasse diagram of $\nabla_{P(\tG)} (P(\tG_1), P(\tG_2))$ is the gluing of two copies of  $\Hasse(\mathbb{B}(2))$ along a copy of  $\Hasse (\mathbb{B}(1))$  -- that is two (empty) squares attached along an edge.

We now relate the path poset of a graph to the gluing of the path posets certain subgraphs. First, for a vertex $v \in V(\tG)$, consider the set~$E_v$ of edges $e_1, \dots e_n$ in $\tG$ incident to $v$, ordered so that $v = t(e_i)$, for $i = 1,\dots, k$, and $v = s(e_j)$, for $j = k+1 ,\dots, n$. Denote by $\tG_v^{k}$ the graph obtained by deleting the edges $e_{1}, \dots e_{k}$ from $\tG$, and set $\tG_v^{(h)} \coloneqq \tG_v^{k} \cup e_h$. 
 
\begin{thm}\label{finedimondo}
If the vertex~$v$ is a target for $k\geq 2$ edges, then 
   \[P(\tG)\cong \nabla_{P({\tG_{v}^{k}})}
   \left(P\left(\tG_v^{(1)}\right), \dots, P\left(\tG_v^{(k)}\right)\right) \ .\]
In other words, the path poset $P(\tG)$ is isomorphic to an iterated gluing of the path posets of the subgraphs~$\tG_v^{(1)}$, ... ,$\tG_v^{(k)}$ over the path poset of ${\tG_v^{k}}$.
\proof
We recall that, if $\tG'$ is a subgraph of a digraph $\tG$, then every multipath in $P(\tG')$ can be seen as a multipath in $P(\tG)$.
This means that  $P(\tG')$ can be seen as an  downward closed (and, in particular, faithful) subposet of $P(\tG)$.

To prove the theorem, we want to produce an isomorphism of posets
\[ P(\tG)\cong \nabla_{P\left({\tG_v^{k}}\right)}
   \left(P\left(\tG_v^{(1)}\right), \dots, P\left(\tG_v^{(k)}\right)\right).\]
First, we start by identifying the underlying sets and, then, we proceed with proving that the respective poset structures are isomorphic.

For the rest of the proof, denote by $T$ the set $\{ e \in E(\tG) \mid t(e) = v \}$.
Let $\tH$ be a multipath in~$P(\tG)$. We have two possible cases:
\begin{enumerate}[label = Case \arabic*:]
 \item $\tH$ does not contain any edge in $T$. Then, all simple paths in $\tH$ are simple paths in $\tG_v^{k}$, and thus $\tH \in P(\tG_v^{k})$.
 \item $\tH$ contains an edge $e_h \in T$. In this case $\tH$ cannot contain any other $e_j \in T$. Therefore, we have $\tH \in P(\tG_v^{(h)}) \setminus P(\tG_v^{k})$.
\end{enumerate}
 On the other hand, observe that any multipath $\tH \in \nabla_{P(\tG_v^{k})}\left(P(\tG_v^{(1)}), \dots, P(\tG_v^{(k)})\right)$ can be identified  with either an element of $P(\tG_v^{(j)}) \setminus P(\tG_v^{k})$, for some $j\in \{1,...,k\}$, or with an element of the poset $P(\tG_v^{k})$. Thus, we have a way to uniquely identify $\tH$ with an element of $P(\tG)$ and, consequently, the underlying sets of $P(\tG)$ and $\nabla_{P(\tG_v^{k})}\left(P(\tG_v^{(1)}), \dots, P(\tG_v^{(k)})\right)$. With abuse of notation, we denote this element again by $\tH$.
 
 Now, we want to prove that the general multipath $\tH$ covers the same elements both in $P(\tG)$ and $\nabla_{P(\tG_v^{k})}\left(P(\tG_v^{(1)}), \dots, P(\tG_v^{(k)})\right)$. This is obvious if $\tH$ is a multipath of $P(\tG_v^{k})$. Assume that~$\tH$ is in $P(\tG) \setminus P(\tG_v^{k})$. Then, there exists a unique $e_j \in T$ such that $e_j \in \tH$. A multipath covered by $\tH$ is then a multipath in $P(\tG_v^{(j)})$ and consequently the same covering relations hold in $\nabla_{P(\tG_v^{k})}\left(P(\tG_v^{(1)}), \dots, P(\tG_v^{(k)})\right)$. 
Finally, if $\tH \in \nabla_{P(\tG_v^{k})}\left(P(\tG_v^{(1)}), \dots, P(\tG_v^{(k)})\right)$, then all the elements covered by $\tH$ are contained in $P(\tG_v^{(j)})$ and it is possible to conclude the proof because~$P(\tG_v^{(j)})$ in an downward closed subposet of $P(\tG)$. 
 \endproof
\end{thm}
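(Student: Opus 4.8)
The plan is to produce an explicit bijection between the underlying sets of $P(\tG)$ and $\nabla_{P(\tG_v^k)}(P(\tG_v^{(1)}),\dots,P(\tG_v^{(k)}))$ and then check that it is an isomorphism of posets by verifying it preserves and reflects the covering relation. The guiding observation, which I would state first, is the one already available in the excerpt: if $\tG'\leq\tG$ is a subgraph, then every multipath of $\tG'$ is a multipath of $\tG$, and in fact $P(\tG')$ sits inside $P(\tG)$ as a downward closed (hence faithful) subposet. In particular $P(\tG_v^k)$ and each $P(\tG_v^{(h)})$ are downward closed subposets of $P(\tG)$, and $P(\tG_v^{(h)})\cap P(\tG_v^{(h')})=P(\tG_v^k)$ for $h\neq h'$ since a multipath lying in both cannot use any edge of $T=\{e\in E(\tG)\mid t(e)=v\}$.

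Next I would partition the multipaths of $\tG$ according to their interaction with $T$. A key combinatorial fact is that $v$ being the target of all edges in $T$ forces any multipath $\tH\in P(\tG)$ to contain \emph{at most one} edge of $T$: if $e_h,e_j\in T$ both lay in $\tH$ with $h\neq j$, then $v=t(e_h)=t(e_j)$ would be a vertex encountered twice by the component(s) of $\tH$, contradicting the definition of simple path / multipath. Hence exactly one of two cases holds: either $\tH\cap T=\varnothing$, in which case $\tH\in P(\tG_v^k)$; or $\tH$ contains a unique $e_h\in T$, in which case $\tH\in P(\tG_v^{(h)})\setminus P(\tG_v^k)$. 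This gives the set-level identification with $\nabla_{P(\tG_v^k)}(P(\tG_v^{(1)}),\dots,P(\tG_v^{(k)}))$, whose elements are by construction the disjoint union of the $P(\tG_v^{(h)})\setminus P(\tG_v^k)$ glued over the common copy of $P(\tG_v^k)$.

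For the order structure it suffices, since both posets are obtained as faithful/downward-closed pieces inside Boolean posets (cf.~Remark~\ref{rem: PG squared and faithful}), to match the covering relations. I would argue: if $\tH\in P(\tG_v^k)$, the elements it covers are multipaths obtained by deleting a single edge, none of which is in $T$, so they again lie in $P(\tG_v^k)$ and the covering relations agree on both sides trivially. If instead $\tH$ contains the unique edge $e_j\in T$, then deleting any single edge of $\tH$ (including possibly $e_j$ itself) yields a multipath of $\tG_v^{(j)}$, so \emph{all} elements covered by $\tH$ in $P(\tG)$ lie inside $P(\tG_v^{(j)})$; conversely, since $P(\tG_v^{(j)})$ is downward closed in $P(\tG)$, every element covered by $\tH$ inside $\nabla_{P(\tG_v^k)}(\dots)$ is one of these, and covers in $P(\tG_v^{(j)})$ are exactly covers in $P(\tG)$. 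Thus the covering relations coincide, and since a faithful subposet of a Boolean poset is determined by its covering relation, the two posets are isomorphic. Finally I would note the $k\geq 2$ hypothesis is only used to make the gluing nontrivial and that the argument for a source vertex is identical after reversing orientations.

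The main obstacle I anticipate is purely bookkeeping rather than conceptual: one must be careful that the set-theoretic gluing $\nabla$ genuinely identifies the copies of $P(\tG_v^k)$ sitting inside the various $P(\tG_v^{(h)})$ \emph{and nothing more} (i.e.~that $P(\tG_v^{(h)})\cap P(\tG_v^{(h')})$ is exactly $P(\tG_v^k)$, not something larger), and that when $\tH$ contains $e_j$ the multipath $\tH\setminus e_j$ really is a legal multipath of $\tG_v^{(j)}$ — which it is, since removing an edge from a multipath always yields a multipath, and $\tH\setminus e_j$ uses no edge of $T$. Once these two checks are in place, the poset-isomorphism claim follows formally.
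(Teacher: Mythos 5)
Your proposal is correct and follows essentially the same route as the paper's proof: partition the multipaths of $\tG$ according to whether they contain an edge of $T$ (using that a multipath can contain at most one such edge, since $v$ would otherwise be the target of two edges of the same multipath), identify the underlying sets with those of the gluing, and then match covering relations via the downward-closedness of the subposets $P(\tG_v^{(j)})$ in $P(\tG)$. Your extra remarks --- that $P(\tG_v^{(h)})\cap P(\tG_v^{(h')})=P(\tG_v^{k})$ for $h\neq h'$ and that deleting an edge from a multipath yields a multipath --- are correct and only make explicit checks the paper leaves implicit.
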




\section{Applications to Multipath Cohomology}\label{sec:applications}

In this section, we prove a Mayer-Vietoris-type theorem, and some aciclicity criteria for multipath cohomology with coefficients in a field. 

\subsection{The cohomology of the cone construction}

Recall first that a  poset $P=(S,\triangleleft)$ can be seen as a category $\mathbf{P}$. 
Given a functor $\mathcal{F}$ from $\mathbf{P}$ to the category of vector spaces, with some mild assumptions on $P$, there are well-defined cohomology groups~$\H^*(\mathbf{P};\mathcal{F})$ of $\mathbf{P}$ with coefficients in $\mathcal{F}$ -- cf.~\cite[Theorem~3.7]{primo} -- which, when $P=P(\tG)$ is the path poset of a digraph~$\tG$, gives the multipath cohomology. 

We  denote by $(C^*(P; \bK),\partial_P)$ the  cochain complex $(C^*_{\mathcal{F}_{\bK}}(P),\partial)$ associated to a poset~$P$ (and level~$\ell$) and to the functor $\mathcal{F}_{\bK}$ assigning a copy of $\bK$ to each object of $\mathbf{P}$ and the identity $\bK\to \bK$ to each arrow. Analogously, we denote by $\H^*(P;\bK)$ the cohomology groups of the cochain complex $(C^*(P; \bK),\partial_P)$. 
Recall that for a map $f\colon (A,\partial_A)\to (B,\partial_B)$ of cochain complexes, the mapping cone ${\rm Cone} f$ is the cochain complex defined in degree $n$ as  $({\rm Cone} f)^n\coloneqq A^{n+1}\oplus B^n$ and differential 
\[
\partial\coloneqq 
\begin{pmatrix}
\partial_{A}[1] & 0\\
f[1] & \partial_B
\end{pmatrix}
\]
where $\partial_A[1]$ and $f[1]$ represent the differential $\partial_A$ and the morphism $f$ shifted by one. 

\begin{thm}\label{HomCono} 
Let $\tG$ be a digraph. Then, we get an isomorphism of cochain complexes
\[C^*(\Cone P(\tG);\bK) \cong {\rm Cone}\left({\rm Id}_{C^*_\mu(\tG;\bK)}\right){[-1]} \]
where ${\rm Cone}\left({\rm Id}_{C^*_\mu(\tG;\bK)}\right)$ represents the mapping cone of the identity map on the cochain complex $C^*_\mu(\tG;\bK)$.
Consequently, we have $\H^*(\Cone P(\tG);\bK)= 0$.
\end{thm}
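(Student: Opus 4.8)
The plan is to build the claimed isomorphism of cochain complexes explicitly on generators, and then read off the vanishing of $\H^*$ from standard properties of mapping cones. First I would recall that, as a set, $\Cone P(\tG) = P(\tG)\times\{0,1\}$ by Definition~\ref{SqCone}, so every multipath-indexed generator of $C^*(\Cone P(\tG);\bK)$ has the form $b_{(\tH,0)}$ or $b_{(\tH,1)}$ with $\tH\in P(\tG)$. Using Remark~\ref{coveringcone}, the covering relations of $\Cone P(\tG)$ split into three families: $(\tH,0)\prec(\tH',0)$ and $(\tH,1)\prec(\tH',1)$ whenever $\tH\prec\tH'$ in $P(\tG)$, and $(\tH,0)\prec(\tH,1)$ for each $\tH$. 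I would next check that the level function behaves as $\ell(\tH,0)=\ell(\tH)$ and $\ell(\tH,1)=\ell(\tH)+1$ (the minimum of $\Cone P(\tG)$ is $(m,0)$ where $m$ is the minimum of $P(\tG)$, and a shortest Hasse path to $(\tH,1)$ goes through $(\tH,0)$), so the degree-$n$ part of $C^*(\Cone P(\tG);\bK)$ is spanned by the $b_{(\tH,0)}$ with $\ell(\tH)=n$ together with the $b_{(\tH,1)}$ with $\ell(\tH)=n-1$. This matches, degree by degree, the group $(\mathrm{Cone}(\mathrm{Id}))^{n-1}=C^{n}_\mu(\tG;\bK)\oplus C^{n-1}_\mu(\tG;\bK)$, which is the $[-1]$-shift appearing in the statement.

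Next I would define the candidate isomorphism $\Phi$ by sending $b_{(\tH,0)}$ to the generator of $C^*_\mu(\tG;\bK)$ indexed by $\tH$ sitting in the ``$B$'' summand, and $b_{(\tH,1)}$ to the one indexed by $\tH$ in the ``$A$'' summand (possibly up to a sign I would need to track). The content is that $\Phi$ intertwines the differentials. The differential on $C^*(\Cone P(\tG);\bK)$ applied to $b_{(\tH,0)}$ produces, by the three families of covering relations above, a sum over $\tH'\succ\tH$ of terms $b_{(\tH',0)}$ plus the single term $b_{(\tH,1)}$; under $\Phi$ this is exactly $\partial_B$ applied in the $B$-summand together with the $f[1]=\mathrm{Id}[1]$ component landing in... wait, one must be careful with which summand ``sees'' which piece. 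Concretely, since $(\mathrm{Cone}\,f)^n=A^{n+1}\oplus B^n$ with differential $\bigl(\begin{smallmatrix}\partial_A[1]&0\\ f[1]&\partial_B\end{smallmatrix}\bigr)$, the ``0-layer'' generators $b_{(\tH,0)}$ should correspond to the $B$-factor and the ``1-layer'' generators $b_{(\tH,1)}$ to the $A$-factor; the mixed covering relations $(\tH,0)\prec(\tH,1)$ then realise the off-diagonal $f[1]$, the within-0-layer relations realise $\partial_B$, and the within-1-layer relations realise $\partial_A[1]=\partial_B[1]$ since $A=B=C^*_\mu(\tG;\bK)$. The only genuinely fiddly point is the sign assignment: the differential on $C^*(\Cone P(\tG);\bK)$ uses a chosen sign assignment $\epsilon$ on $\Cone P(\tG)$, whereas the right-hand side uses a sign assignment on $C^*_\mu(\tG;\bK)$ shifted by one. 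I would invoke Remark~\ref{thm:uni_signs} (all sign assignments on a downward-closed subposet of a Boolean poset, and hence on posets like these, are isomorphic) to reduce to a convenient choice of $\epsilon$ on $\Cone P(\tG)$ — for instance one that restricts to the given sign assignment on each $P(\tG)\times\{i\}$ and assigns a controlled sign on the vertical edges — so that the square identity \eqref{eq:signassign} forces the layer-$1$ differential to pick up exactly the extra sign that distinguishes $\partial_A[1]$ from $\partial_B$.

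Finally, once $\Phi$ is established as an isomorphism of cochain complexes, the vanishing $\H^*(\Cone P(\tG);\bK)=0$ is immediate: the mapping cone of the identity map on any cochain complex is acyclic (it is contractible; explicitly a chain homotopy is given by the identity-on-$A$ block), and a degree shift does not affect acyclicity. I would state this as a one-line consequence, citing the standard reference already mentioned for mapping cones, \cite[Chapter~4, \S 2.2]{GelManin}.

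I expect the main obstacle to be bookkeeping the signs: matching the sign $(-1)^{\epsilon(\tH,\tH')}$ coming from a sign assignment on $\Cone P(\tG)$ with the signs built into the mapping-cone differential (the $[1]$-shifts in both $\partial_A[1]$ and $f[1]$). Everything else — the identification of underlying graded groups via the level computation, and the description of covering relations — is routine given Remark~\ref{coveringcone} and Definition~\ref{def:ell}. The cleanest route through the sign issue is probably to fix the specific sign assignment on $\Cone P(\tG)$ coming from viewing it inside an ambient Boolean poset (Example~\ref{exa:sign on Boolean}) with the ``vertical'' edge ordered last, compute directly, and then appeal to Remark~\ref{thm:uni_signs} for independence of the choice.
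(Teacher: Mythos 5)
Your proposal is correct and follows essentially the same route as the paper's proof: the same partition of $\Cone P(\tG)$ into the two layers $P(\tG)\times\{0\}$ and $P(\tG)\times\{1\}$, the same level computation $\ell(\tH,i)=\ell(\tH)+i$ giving the degree shift, and the same resolution of the sign issue (the paper takes $\epsilon'((\tH,i),(\tH',j))\coloneqq\epsilon(\tH,\tH')+i$ and checks it is a sign assignment, which is the ``controlled sign on the vertical edges'' you describe, with uniqueness of sign assignments covering any other choice). The identification of the $0$-layer with the $B$-factor, the $1$-layer with the $A$-factor, and the vertical covering relations with $f[1]=\mathrm{Id}[1]$ is exactly what the paper does.
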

{To simplify the notation, we drop the reference to $\bK$ in the proof of the theorem.}
\proof
Recall that the cone of a poset $P$ is the product poset $P\times \mathbb{B}(1)$ -- cf.~Definition \ref{SqCone}. 
Consider the partition $P(\tG)\times \{ 0 \} \sqcup P(\tG)\times\{ 1\}  = \Cone P(\tG)$. 
Furthermore, we also have that~$ \ell_{\Cone P(\tG)}(\tH,i)= \ell_{P(\tG)\times \{ i \}}(\tH,i) + i = \ell_{P(\tG)}(\tH) + i$, for $i\in \{ 0,1\}$.
As a consequence, we have the isomorphism of graded $\bK$-vector spaces
\[C^*(\Cone P(\tG)) \cong C^*(P(\tG)\times \{0\})\oplus C^*(P(\tG)\times \{1\})[-1]\]
where $C^*(P(\tG))[-1]$ denotes the complex $C^*(P(\tG))$ shifted by one.
In turn, we have an isomorphism of posets $P(\tG)\cong P(\tG)\times\{ i\}$, for $i=0,1$, given by the identification $\tH \mapsto (\tH,i)$.
These identifications induce the isomorphism of graded $\bK$-vector spaces
\[C^{*}(P(\tG)\times \{0\})\oplus C^{*-1}(P(\tG)\times \{1\})\cong\Cmu^{*}(\tG)\oplus \Cmu^{*-1}(\tG) ={\rm Cone}\left({\rm Id}_{C^*_\mu(\tG;\bK)}\right)[-1] . \]

Now, we have to show that the above isomorphism commutes with the differentials.
The differential~$\partial$ of ${\rm Cone}\left({\rm Id}_{C^*_\mu(\tG;\bK)}\right)[-1]$ is defined as
\[
\partial\coloneqq 
\begin{pmatrix}
\partial_{\mu} & 0\\
\mathrm{Id}_{C^*_\mu(\tG;\bK)} & \partial_{\mu}[-1]
\end{pmatrix}\]
where $(\partial_{\mu}[{-1}])^n \coloneqq (-1)^n\partial_{\mu}^{{n-1}}$.
The differential of $C^*(\Cone P(\tG))  $, can be  explicitly written:
\[ \partial_{\Cone P(\tG)}(e_{(\tH, i)}) = (1-i)e_{(\tH,1)}+\sum_{\tH\prec \tH'} (-1)^{\epsilon(\tH,\tH') + i}e_{(\tH',i)} \]
where $\epsilon(\tH,\tH')$ is a sign assignment on $P(\tG)$ and $e_{(\tH, i)}$ is the generator of $C^*(P(\tG)\times \{i\})$ associated to the multipath $\tH$ in $\tG$. It is easy to check that $\epsilon'((\tH,i),(\tH',j)) \coloneqq \epsilon(\tH,\tH') + i$ is a sign assignment on $\Cone P(\tG)$. 

The isomorphism $C^*(\Cone P(\tG)) \cong {\rm Cone}\left({\rm Id}_{C^*_\mu(\tG;\bK)}\right)[-1]$ described above  commutes with these differentials, concluding the proof of the first part of the statement.
The vanishing result follows from the classical properties of the mapping cone of chain complexes \cite{Weibel}.
\endproof

We observe that the second part of Theorem~\ref{HomCono} can be alternatively proved using discrete Morse theory -- see~\cite[Chapter 11]{Kozlov} for an introduction;
if we consider the edges in $\Hasse(\Cone P(\tG)))$ with source in $ P(\tG)\times \{ 0\}$ and target in $P(\tG)\times \{ 1\}$, then these form an acyclic matching (\cite[Definition 11.1]{Kozlov}) whose edges are incident to all vertices of the graph~$\Hasse(\Cone P(\tG))$. It follows from the definitions and \cite[Theorem 11.24]{Kozlov} that the homology of $C^*(\Cone P(\tG);\bK)$ is trivial.

\subsection{A Mayer-Vietoris theorem}

The goal of this subsection is to prove a result which is the analogue, in the framework of multipath cohomology, of the classical Mayer-Vietoris theorem. In the classical statement, given a decomposition of a topological space as union of two subspaces, there is an induced long exact sequence of (co-)homology groups featuring also their intersections. In the setting of multipath cohomology, the \emph{r\^ole} played by unions of topological spaces is given by the gluing of posets. 
Recall that, for $\phi\colon \tG'\to\tG$, we have an induced morphism of posets $P\phi\colon P(\tG')\to P(\tG)$ -- see \cite[Remark~2.33]{primo}. Furthermore, 
\cite[Proposition~5.11]{primo}
%
%
%
%
gives us a map between the multipath cochain complex of a graph $\tG$, and the multipath cochain complex of a sub-graph $\tG_1$.

\begin{thm}\label{thm:MVposets}
Let $\tG, \tG_1, \tG_2$ be directed graphs, and $i_1\colon\tG \to \tG_1$ and $i_2\colon\tG \to \tG_2$ be  regular morphisms of digraphs. Then, we have a  short exact sequence of cochain complexes 
\begin{equation}\label{eq:MV}
0 \rightarrow C^*(\nabla_{P(\tG)} (P(\tG_1), \, P(\tG_2));\bK) \xrightarrow{I^*} \Cmu^*( \tG_1;\bK)\oplus \Cmu^*(\tG_2;\bK) \xrightarrow{J^*}  \Cmu^*( \tG; \bK)\rightarrow 0\end{equation}
inducing the long exact sequence
  \[\Scale[0.96]
  { \cdots \to  \Hmu^{i-1}(\tG;\bK) \to \H^{i}(\nabla_{P(\tG)} (P(\tG_1), \, P(\tG_2));\bK) \to \Hmu^{i}(\tG_1;\bK)  \oplus \Hmu^{i}( \tG_2;\bK)  \to \Hmu^{i}( \tG;\bK) \to \cdots} \]
 of  cohomology groups.
 \end{thm}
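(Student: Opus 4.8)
The plan is to construct the short exact sequence \eqref{eq:MV} directly at the cochain level and then invoke the standard homological algebra that produces the associated long exact sequence. The key point is that, by Theorem~\ref{finedimondo} and its surrounding discussion, the gluing $\nabla_{P(\tG)}(P(\tG_1),P(\tG_2))$ is a poset whose underlying set is the union $P(\tG_1)\cup P(\tG_2)$ with the two pieces identified along the common downward closed subposet $P(\tG)$; moreover each of $P(\tG_1)$, $P(\tG_2)$, and $P(\tG)$ sits inside $\nabla_{P(\tG)}(P(\tG_1),P(\tG_2))$ as a downward closed (hence faithful) subposet, so levels are compatible and one may use a single sign assignment restricting to all of them coherently (using Remark~\ref{thm:uni_signs} to align the signs). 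First I would set $Q\coloneqq \nabla_{P(\tG)}(P(\tG_1),P(\tG_2))$ and describe a $\bK$-basis of $C^*(Q;\bK)$ indexed by the multipaths of $Q$, which splits as those coming from $P(\tG_1)$, those from $P(\tG_2)$, with the overlap being exactly those from $P(\tG)$. This is the poset-theoretic shadow of the usual ``Mayer--Vietoris chains'' $C_*(X_1\cup X_2)$ decomposing via $C_*(X_1)\oplus C_*(X_2)$ and $C_*(X_1\cap X_2)$.

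Next I would define the maps. The map $I^*\colon C^*(Q;\bK)\to \Cmu^*(\tG_1;\bK)\oplus\Cmu^*(\tG_2;\bK)$ is the pair of restriction (co)chain maps $(r_1,r_2)$ induced by the inclusions $P(\tG_i)\hookrightarrow Q$ of downward closed subposets — these are cochain maps because each $P(\tG_i)$ is downward closed in $Q$, so the differential of $Q$ restricts correctly (this is precisely the mechanism behind Proposition~\ref{prop:ses subgraphs}). The map $J^*\colon \Cmu^*(\tG_1;\bK)\oplus\Cmu^*(\tG_2;\bK)\to \Cmu^*(\tG;\bK)$ is the difference $j_1^*-j_2^*$ of the two restrictions to $P(\tG)$. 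Injectivity of $I^*$ is immediate since a cochain on $Q$ is determined by its values on $P(\tG_1)$ and $P(\tG_2)$, which cover $Q$. Surjectivity of $J^*$ follows because $j_1^*$ alone is already surjective (it is the restriction to a downward closed subposet, which just forgets basis elements). Exactness in the middle is the usual computation: an element $(x_1,x_2)$ with $j_1^*(x_1)=j_2^*(x_2)$ has $x_1,x_2$ agreeing on $P(\tG)$, hence glues to a well-defined cochain on $Q$; conversely $J^*I^*=0$ by construction. One must check that levels match up so that these are all graded of the correct degree: since $P(\tG)$ is downward closed in $Q$ and in each $P(\tG_i)$, the level function (Definition~\ref{def:ell}) is preserved under all the inclusions, so no shift appears — unlike in Proposition~\ref{prop:ses subgraphs} where the complement poset needed a shift. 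The long exact sequence then follows from the snake lemma / the standard long exact sequence of a short exact sequence of cochain complexes, with connecting map $\Hmu^{i-1}(\tG;\bK)\to \H^i(Q;\bK)$.

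The main obstacle I anticipate is the bookkeeping around sign assignments: to make $I^*$, $j_1^*$, $j_2^*$ honest cochain maps simultaneously, one needs a sign assignment $\epsilon_Q$ on $Q$ whose restriction to each $P(\tG_i)$ (and hence to $P(\tG)$) is the chosen sign assignment there, and one must verify the square condition \eqref{eq:signassign} is inherited on $Q$. Since $Q$ is built by gluing squared posets along a downward closed (squared) subposet, it is itself squared, and Remark~\ref{thm:uni_signs} guarantees all sign assignments are isomorphic, so such a coherent $\epsilon_Q$ exists; the care is in writing this cleanly. A secondary, minor point is to confirm that $\nabla_{P(\tG)}(P(\tG_1),P(\tG_2))$ has a unique minimum (the empty multipath in each factor gets identified), so that $\H^*(Q;\bK)$ is well-defined via Definition~\ref{def:ell}; this holds because $i_1,i_2$ are regular morphisms and the empty spanning subgraphs are identified under the gluing. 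Once these are in place, the proof is the formal argument above, and I would close by remarking that in Section~\ref{sec:MVtop} this is re-derived topologically from the simplicial Mayer--Vietoris via Theorem~\ref{thm:multipath is simplicial}.
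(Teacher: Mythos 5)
Your proposal is correct and takes essentially the same route as the paper's proof: both define $I^*$ as the pair of restriction maps to $P(\tG_1)$ and $P(\tG_2)$ and $J^*$ as the difference of the restrictions to $P(\tG)$, and verify injectivity, middle exactness, and surjectivity directly on the basis of multipaths, with the long exact sequence then following from standard homological algebra. Your extra care about coherent sign assignments and the compatibility of level functions is implicit in the paper (via Remark~\ref{thm:uni_signs} and downward closedness) but does not change the argument.
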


 \begin{proof} 
 We first observe that, as a consequence of the definition of gluing -- cf.~Definition~\ref{nabla} -- $P(\tG_1)$ and $P(\tG_2)$ are isomorphic to subposets of $\nabla_{P(\tG)} (P(\tG_1), \, P(\tG_2))$; call $\jmath_1$ and $\jmath_2$ these isomorphisms.
The inclusions of graphs $i_1\colon\tG \to \tG_1$ and $i_2\colon\tG \to \tG_2$ induce morphisms of posets $\imath_1\colon P(\tG) \rightarrow P(\tG_1) $ and $\imath_2\colon P(\tG) \rightarrow P(\tG_2) $. All the resulting morphisms fit into the following commutative square of posets
\[\begin{tikzcd}
P(\tG)\arrow[r, "\jmath_1"]\arrow[d, "\jmath_2"']& P(\tG_1) \arrow[d, "\imath_1"] \\
P(\tG_2)\arrow[r,  "\imath_2"' ]& \nabla_{P(\tG)} (P(\tG_1), \, P(\tG_2)) 
	\end{tikzcd} \]
and, induce a commutative diagram  of cochain complexes:
 \[\begin{tikzcd}
C^*(\nabla_{P(\tG)} (P(\tG_1), \, P(\tG_2));\bK)\arrow[r, "\imath_1^*"]\arrow[d, "\imath_2^*"']& \Cmu^*(\tG_1;\bK) \arrow[d, "\jmath_1^*"]  \\
\Cmu^*(\tG_2;\bK)\arrow[r,  "\jmath_2^*"' ]& \Cmu^*(\tG;\bK) 
	\end{tikzcd}\]
Now, for every $n\in \bN$, consider the maps 
\[ I^n \coloneqq \imath^n_1 \oplus \imath^n_2 \colon C^n(\nabla_{P(\tG)} (P(\tG_1), \, P({\tG_2}));\bK) \rightarrow  \Cmu^n(\tG_1;\bK) \oplus \Cmu^n(\tG_2;\bK) \ ,\]
and 
\[ J^n \coloneqq \jmath^n_1 - \jmath^n_2 \colon  \Cmu^n(\tG_1;\bK) \oplus \Cmu^n(\tG_2;\bK) \rightarrow \Cmu^n(\tG;\bK) \ .\]
We proceed with proving that the sequence of complexes in Equation~\eqref{eq:MV} is exact. 
The cochain complexes $C^*(\nabla_{P(\tG)} (P(\tG_1), \, P_{G_2});\bK)$, $\Cmu^n(\tG_1;\bK)$, and $\Cmu^n(\tG_2;\bK)$ have bases indexed by the elements of the corresponding poset (namely, $\nabla_{P(\tG)} (P(\tG_1), \, P_{G_2})$, $P(\tG_1)$, and $P(\tG_2)$, respectively).
We denote by $b_\tH$ the element of each of these bases corresponding the multipath~$\tH$.

With this notation, a generic element $x$ of $C^n(\nabla_{P(\tG)} (P(\tG_1), \, P_{G_2});\bK)$ is of the form 
\[ x = \sum_{\tH \in P(\tG)} \alpha_{\tH} b_{\imath_{1}\circ \jmath_1(\tH)} + \sum_{\tH' \in P(\tG_1)\setminus \jmath_1(P(\tG))} \beta_{\tH'} b_{\imath_{1}(\tH')} + \sum_{\tH'' \in P(\tG_2)\setminus \jmath_2(P(\tG))} \gamma_{\tH'} b_{\imath_{2}(\tH'')}  \]
with $\ell(\tH) =\ell(\tH') =n$. Note that $\imath_{1}\circ \jmath_1 = \imath_{2}\circ \jmath_2$, thus $ b_{\imath_{1}\circ \jmath_1(\tH)} = b_{\imath_{2}\circ \jmath_2(\tH)}$ for each $\tH \in P(\tG)$.
We are now ready to verify that $I^n$ is injective. 
With respect to the basis  above, we can write
 \[
  { I^n(x) =
  \left( \sum_{\tH \in P(\tG)} \alpha_{\tH} b_{\jmath_1(\tH)} + \sum_{\tH' \in P(\tG_1)\setminus \jmath_1(P(\tG))} \beta_{\tH'} b_{\tH'}, \sum_{\tH \in P(\tG) } \alpha_{\tH} b_{\jmath_2(\tH)} + \sum_{\tH'' \in P(\tG_2)\setminus \jmath_{2}(P(\tG)) } \gamma_{\tH''} b_{\tH''}\right) } \ .\]
 It follows that $I^n(x) = 0$ if and only if all the coefficients $\alpha_\tH$, $\beta_\tH$ and $\gamma_{\tH'}$, and thus $x$, are zero.
 
 It is left to show that $J^n$ is surjective and that $\mathrm{Ker}(I^n) = \mathrm{Im}(J^n)$. First, we write $J^n$ explicitly, as follows;
 \[ J^n\left( \sum_{\tH' \in P(\tG_1)} \alpha_{\tH'} b_{\tH'} , \sum_{\tH'' \in P(\tG_2)} \beta_{\tH''} b_{\tH''}\right) = \sum_{\tH \in P(\tG)} (\alpha_{\jmath_1(\tH)} - \beta_{\jmath_2(\tH)} )b_{\tH} \ . \]
Now, for $J^n(x)$ to be zero, we must have that $\alpha_{\jmath_1(\tH)} = \beta_{\jmath_2(\tH)}$, for all $\tH \in P(\tG)$, independently on the values of~$\alpha_{\tH'}$ and $\beta_{\tH''}$ for~$\tH'\notin \jmath_1(P(\tG))$, and $\tH'' \notin \jmath_2(P(\tG))$. It follows that the kernel $J^n$ is precisely the image of~$I^n$. Finally, $J^n$ is clearly surjective, concluding the proof.
 \end{proof}

The following observation is straightforward:

\begin{rem}\label{cohnabla}
Under the assumptions of Theorem~\ref{thm:MVposets}, if $\tG_1$ and $\tG_2$ have trivial multipath cohomology, then~$\H^{n}(\nabla_{P(\tG)} (P(\tG_1), \, P(\tG_2));\bK)$ is isomorphic to  $\Hmu^{n-1}(\tG;\bK)$ for all $n$.
\end{rem}

By iterated applications of the Mayer-Vietoris long exact sequence for the multipath cohomology, we obtain the corollary:

\begin{cor}\label{nullgluing}
Let $\tG_1, \dots, \tG_n$ be digraphs and suppose that for all $j\in \{1,\dots,n\}$ there exists a regular morphism $i_j\colon \tG \rightarrow \tG_j $. If all the cohomology groups $\Hmu^*(\tG;\bK)$ and $\Hmu^*(\tG_j;\bK)$ vanish for all $j$, then \[\H^*\left(\nabla_{P(\tG)} (P(\tG_1), \dots  P(\tG_n));\bK\right)=0 \ .\]
\end{cor}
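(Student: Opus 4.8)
The plan is to prove Corollary~\ref{nullgluing} by induction on $n$, using Theorem~\ref{thm:MVposets} as the inductive engine and Remark~\ref{cohnabla} to handle the base case. For $n=1$ there is nothing to glue (or, trivially, $\nabla_{P(\tG)}(P(\tG_1)) = P(\tG_1)$, which has vanishing cohomology by hypothesis). For $n=2$, the statement is exactly Remark~\ref{cohnabla}: since $\Hmu^*(\tG_1;\bK) = \Hmu^*(\tG_2;\bK) = \Hmu^*(\tG;\bK) = 0$, the Mayer--Vietoris long exact sequence of Theorem~\ref{thm:MVposets} forces $\H^n(\nabla_{P(\tG)}(P(\tG_1),P(\tG_2));\bK) \cong \Hmu^{n-1}(\tG;\bK) = 0$ for all $n$.

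For the inductive step, suppose the claim holds for any family of $n-1$ digraphs admitting regular morphisms from a common source with vanishing cohomology. Given $\tG_1,\dots,\tG_n$ and regular morphisms $i_j\colon \tG \to \tG_j$, set $\tG' \coloneqq \nabla_{P(\tG)}(\tG_1,\dots,\tG_{n-1})$ — more precisely, recall from the notation preceding Definition~\ref{nabla} that $\nabla_{P(\tG)}(P(\tG_1),\dots,P(\tG_n))$ is built as an iterated gluing $\nabla_{P(\tG)}\bigl(\nabla_{P(\tG)}(P(\tG_1),\dots,P(\tG_{n-1})),\,P(\tG_n)\bigr)$, so it suffices to apply the $n=2$ case to the two posets $Q \coloneqq \nabla_{P(\tG)}(P(\tG_1),\dots,P(\tG_{n-1}))$ and $P(\tG_n)$, glued along (the image of) $P(\tG)$. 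The inductive hypothesis gives $\H^*(Q;\bK) = 0$, and $\Hmu^*(\tG_n;\bK) = 0$ and $\Hmu^*(\tG;\bK) = 0$ by hypothesis; one more application of the Mayer--Vietoris long exact sequence (in the poset-gluing form of Theorem~\ref{thm:MVposets}, or directly of Remark~\ref{cohnabla}) then yields $\H^*(\nabla_{P(\tG)}(P(\tG_1),\dots,P(\tG_n));\bK) = 0$.

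The one point requiring a little care — and the main (mild) obstacle — is that Theorem~\ref{thm:MVposets} and Remark~\ref{cohnabla} are phrased for gluings of the form $\nabla_{P(\tG)}(P(\tG_1),P(\tG_2))$ coming from honest digraph gluings, whereas in the induction the first slot is occupied by an abstract poset $Q$ that is itself an iterated gluing, not a priori of the form $P(\tH)$ for a single digraph $\tH$. This is not a genuine problem: the short exact sequence in the proof of Theorem~\ref{thm:MVposets} is constructed purely at the level of posets and their associated chain complexes $C^*(-;\bK)$ (the bases are indexed by poset elements, the maps $I^n, J^n$ by the gluing combinatorics), so it applies verbatim with $\Cmu^*(\tG_1;\bK)$ replaced by $C^*(Q;\bK)$. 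One should either invoke this observation explicitly, noting that $Q$ contains $P(\tG)$ as a (faithful, downward closed) subposet with the same compatibility of levels, or — cleanly — realise $Q$ as $P(\nabla_{\tG}(\tG_1,\dots,\tG_{n-1}))$ is \emph{not} in general true, so the honest route is to state Theorem~\ref{thm:MVposets} at the level of a common subposet and reuse it. In the write-up I would simply remark that the proof of Theorem~\ref{thm:MVposets} only uses the poset structure, hence the same long exact sequence holds for the gluing of $Q$ and $P(\tG_n)$ along $P(\tG)$, and then the induction closes immediately.
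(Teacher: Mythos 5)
Your proof is correct and is essentially the paper's own argument: the corollary is stated there as following ``by iterated applications of the Mayer--Vietoris long exact sequence,'' with no further detail given. Your observation that the inductive step applies Theorem~\ref{thm:MVposets} to a glued poset $Q$ that need not be the path poset of any single digraph --- and that the theorem's proof goes through verbatim at the level of posets, since $P(\tG)$ sits inside $Q$ as a downward closed faithful subposet with compatible levels --- is precisely the point the paper leaves implicit, and you resolve it correctly.
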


{In the next subsection, we apply the results shown in this subsection to obtain vanishing criteria for multipath cohomology.}

\subsection{Acyclicity Criteria and Examples}
The aim of this subsection is to find sufficient conditions on a graph $\tG$, for $\Hmu^*(\tG;\bK)$ to be trivial. Using the same notation as in Theorem~\ref{finedimondo}, we obtain the first vanishing criterion;
\begin{crit}\label{crit:FDM} Assume that a digraph~$\tG$ satisfies the following conditions:
\begin{enumerate}
\item there exists a vertex $v$ that is the target (or source) of $k \geq 2$ edges;
\item the graphs $\tG_v^{(1)},\dots,\tG_v^{(k)}, \tG_v^{k} $ have trivial multipath cohomology.
\end{enumerate}
Then, $\Hmu^*(\tG;\bK)=0$.
\end{crit}
\begin{proof}
By Theorem~\ref{finedimondo}, we have the isomorphism of posets \[P(\tG)\cong \nabla_{P\left({\tG_v^{k}}\right)} \left(P\left(\tG_v^{(1)}\right), \dots, P\left(\tG_v^{(k)}\right)\right) \ .\] Then, the statement follows from Corollary~\ref{nullgluing}.
\end{proof}
{At a first glance Criterion~\ref{crit:FDM} looks quite technical. However, it can easily applied in practice. 
\begin{example}
Let $\tG$ be the graph shown in Figure~\ref{fig:H_n1n}. Recall from Example~\ref{ex:In} that a coherent linear graph has trivial cohomology. Then, by applying Criterion~\ref{crit:FDM}, we get that $\Hmu^*(\tG;\bK)=0$.
\begin{figure}[H]
    \centering
    \begin{tikzpicture}[thick]
        \node (a1) at (0,0.5) {};
        \node (b1) at (1.25,1) {};
        \node (b2) at (2.75,1) {};
        \node (c1) at (0,1.5) {};
        \node (c2) at (4,1) {};
        
        \node[below left]  at (0,0.5) {};
        \node[above] at (1.25,1) {$v$};
        \node[above] at (0,0.7) {$\vdots$};

        \node[above] at (2.75,1) {};

        \draw[fill] (a1) circle (0.03);
        \draw[fill] (b1) circle (0.03);
        \draw[fill] (c1) circle (0.03);
        \draw[fill] (b2) circle (0.03);
        \draw[fill] (c2) circle (0.03);

        \draw[bunired,-latex](b1) -- (b2);
        \draw[bunired,latex-] (b1) -- (a1);
        \draw[bunired,-latex] (c1) -- (b1);
        \draw[bunired,-latex] (b2) -- (c2);
        
    \end{tikzpicture}
    \caption{A graph with $n$ edges with target $v$ glued to a linear graph.}
    \label{fig:H_n1n}
\end{figure}
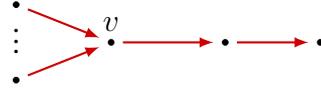
\end{example}}

Criterion~\ref{crit:FDM} says that we can infer the vanishing of multipath cohomology by looking at smaller pieces in the graph.
Our second criterion is based on the existence of suitably ``embedded'' subgraphs. To formalise this we need the notion of $\nu$-equivalence.

\begin{defn}\label{def:nu-equiv}
A  morphism of directed graphs $\phi\colon \tG \rightarrow \tG'$ is a \emph{$\nu$-equivalence} away from a (possibly empty) set of vertices $V \in V(\tG)$, if the valence of $v \in V(\tG)$ is the same as the valence of $\phi(v) \in V(\tG')$, for every $v \in V(\tG) \setminus V$.
\end{defn}

Note that a $\nu$-equivalence $\phi\colon \tG \rightarrow \tG'$ away from an empty set of vertices, is just the inclusion of $\tG$ as a connected component of $\tG'$.

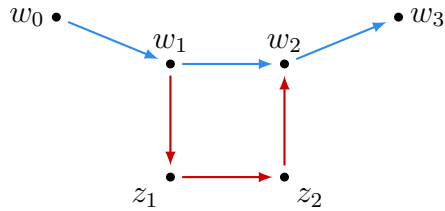
\begin{figure}[H]
    \centering
    \begin{tikzpicture}[thick,scale = 1.5]
        \node (a1) at (1,0) {};
        \node (a2) at (2,0) {};
        \node (b1) at (1,1) {};
        \node (b2) at (2,1) {};
        
        \node (c1) at (0,1.414) {};
        \node (c2) at (3,1.414) {};
        
        \node[below left]  at (1,0) {$z_1$};
        \node[below right] at (2,0) {$z_2$};
        \node[above] at (1,1) {$w_1$};
        \node[above] at (2,1) {$w_2$};
        
        \node[left] at (0,1.414) {$w_0$};
        \node[right] at (3,1.414) {$w_3$};
        
        \draw[fill] (a1) circle (0.03);
        \draw[fill] (b1) circle (0.03);
        \draw[fill] (c1) circle (0.03);
        \draw[fill] (a2) circle (0.03);
        \draw[fill] (b2) circle (0.03);
        \draw[fill] (c2) circle (0.03);

        \draw[bunired,-latex] (a1) -- (a2);
        \draw[bluedefrance,-latex] (b1) -- (b2);
        \draw[bunired,-latex] (a2) -- (b2);
        \draw[bunired,-latex] (b1) -- (a1);
        \draw[bluedefrance,-latex] (c1) -- (b1);
        \draw[bluedefrance,-latex] (b2) -- (c2);
        
    \end{tikzpicture}
    \caption{A graph $\tG$ and two copies of $\tI_3$ inside it. The morphism of digraphs~$\phi_{r}$, whose image is the red copy of $\tI_3$, is a $\nu$-equivalence away from $v_0$ and $v_3$, while morphism of digraphs $\phi_{b}$, whose image is the blue copy of $\tI_3$, is a $\nu$-equivalence away from $v_1$ and $v_2$.}
    \label{fig:nu_eq}
\end{figure}
\begin{example}
Let $\tG$ be the graph in Figure~\ref{fig:nu_eq}, and denote by $\tI_3$ the linear digraph illustrated in Figure~\ref{fig:nstep}, with vertices labelled as in the aforementioned figure.
Consider the morphisms of digraphs
$ \phi_{b},\phi_{r}\colon \tI_3 \to \tG, $
defined as follows: $\phi_{b}(v_i) = w_i$, for each $i\in\{0,1,2,3\}$, and
\[ \phi_{r}(v_i) = \begin{cases} w_1 & \text{if }i=0; \\ z_i & \text{if }i=1,2;\\ w_2 & \text{if }i=3.\end{cases} \]
Then, $\phi_b$ is a $\nu$-equivalence {away from} $v_1$ and $v_2$, while $\phi_b$ is so, {away from} $v_0$ and~$v_3$.
\end{example}
 In order to state our next criterion we need a new family of graphs $\tH_{n,m}$, illustrated in Figure~\ref{fig:H_nm}.
\begin{figure}[H]
    \centering
    \begin{tikzpicture}[thick]
        \node (a1) at (0,0.5) {};
        \node (a2) at (4,0.5) {};
        \node (b1) at (1.25,1) {};
        \node (b2) at (2.75,1) {};
        
        \node (c1) at (0,1.5) {};
        \node (c2) at (4,1.5) {};
        
        \node[below left]  at (0,0.5) {$w_n$};
        \node[below right] at (4,0.5) {$x_m$};
        \node[above] at (1.25,1) {$v_0$};
        \node[above] at (2.75,1) {$v_1$};
        \node[ ] at (0,1) {\raisebox{.5em}{$\vdots$}};
        \node[ ] at (4,1) {\raisebox{.5em}{$\vdots$}};
        
        \node[above left] at (0,1.5) {$w_1$};
        \node[above right] at (4,1.5) {$x_1$};
        
        \draw[fill] (a1) circle (0.03);
        \draw[fill] (b1) circle (0.03);
        \draw[fill] (c1) circle (0.03);
        \draw[fill] (a2) circle (0.03);
        \draw[fill] (b2) circle (0.03);
        \draw[fill] (c2) circle (0.03);

        \draw[bunired,-latex](b1) -- (b2);
        \draw[bunired,latex-] (a2) -- (b2);
        \draw[bunired,latex-] (b1) -- (a1);
        \draw[bunired,-latex] (c1) -- (b1);
        \draw[bunired,-latex] (b2) -- (c2);
        
    \end{tikzpicture}
    \caption{The graph $\tH_{n,m}$.}
    \label{fig:H_nm}
\end{figure}
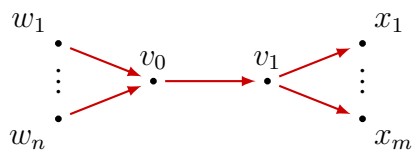
 \begin{crit}\label{crit:istmo}
 Let $\tG$ be a digraph and assume that there exists a {$\nu$-equivalence} $\phi\colon \tH_{n,m} \to \tG$ away from $w_1,...,w_n$ and $x_1, ...,x_m$.
If $(\phi(v_0),\phi(v_1))$ is not contained in any coherently oriented cycle\footnote{That is the image of a regular morphism $\tP_n \to \tG$, for some $n$.} of~$\tG$, then~$\Hmu^*(\tG;\bK)=0$.
\end{crit}
\proof
 Denote by  $e$  the edge $(\phi(v_0),\phi(v_1))$ of $\phi(\tH_{n,m})$  and set $\tG'\coloneqq \tG \setminus \{e\}$. 
We want to show that the poset~$P(\tG)$ is isomorphic to the cone~$\Cone P(\tG')$.
Consider the following subset of the path poset: $P(\tG)_0\coloneqq \{\tH \in P(\tG)\mid e \notin \H\}$ and  $P(\tG)_1\coloneqq\{\tH \in P(\tG)\mid e \in \H\}$ endowed with the poset structure induced by $P(\tG)$. 
We claim that $P(\tG)_i \cong P(\tG')\times \{ i \} \subset \Cone P(\tG')$, for $i=0,1$.
The isomorphism $P(\tG)_0 \cong P(\tG')$ is clear.
Now, to identify  $P(\tG)_1$ with $P(\tG')\times \{1\}$ we observe that:
\begin{enumerate}
    \item the edge~$e$ is coherently oriented with the other edges in $\phi(\tH_{n,m})$,
    \item no multipath contains two edges of the form $(\phi(x_i),\phi(v_1))$ nor contains two edges of the form~$(\phi(v_1),\phi(w_i))$,
    \item and $e$ is not contained in a coherently oriented cycle.
\end{enumerate}
It follows that $\tH \cup e$ is a multipath for every $\tH \in P(\tG)_0$, which implies that the map sending $\tH \in P(\tG)_0$  to  $\tH \cup e \in P(\tG)_1$ is well-defined. 
Note that this map is also a bijection, and that it preserves the inclusions, i.e.~if $\tH \prec \tH' \in P(\tG)_0$ then  $\tH \cup e \prec \tH'\cup e \in P(\tG)_1$. Consequently, we have a sequence of isomorphism of posets $P(\tG)_1 \cong P(\tG)_0 \cong P(\tG')\times \{ 0 \}  \cong P(\tG')\times \{ 1 \}$.
To complete the proof that $P(\tG) \cong \Cone P(\tG')$ we have to check that, under the above chain of identifications, a covering relation between two elements $\tH \in P(\tG)_0$ and $\tH' \in P(\tG)_1$ corresponds uniquely to a covering relation between the corresponding elements $(\tH,0) \in P(\tG')\times \{0\}$ and  $(\tH' \setminus \{e\},1)\in P(\tG')\times \{1\}$. 
This follows directly from  the description of the covering relation in $\Cone P(\tG')$.  As a consequence, the posets $P(\tG)$ and $\Cone P(\tG')$ are isomorphic. From Theorem~\ref{HomCono},  it follows  $\Hmu^*(\tG;\bK)=0$.
\endproof

\begin{rem}\label{rem:tail}
Criterion~\ref{crit:istmo} also holds if either $n =0$ or $m=0$.
In these cases, we say that the graph $\tG$ has \emph{a coherent tail}. With this terminology, we can restate the special case of Criterion~\ref{crit:istmo} when either $n=0$ or $m=0$ as follows; if $\tG$ has a coherent tail, then $\Hmu^*(\tG;\bK)=0$.
\end{rem}
We now provide some examples.
\begin{example}
 An \emph{arborescent graph} (or \emph{arborescence}) is a directed graph in which there is a vertex~$r$, called \emph{root}, and there is exactly one directed path from $r$ to any other vertex. If an arborescent graph~$\tT$ has a vertex at distance~$2$ from the root (i.e.~the unique path joining them has length $2$), then {up to orientation reversing of the edges} there is a $\nu$-equivalence $\tH_{0,m}\to \tT$ away from {$v_0$, for some $m>0$} . Applying Criterion~\ref{crit:istmo}, we get $\Hmu^*(\tT;\bK)=0$.
\end{example}
\begin{figure}[h]
	\begin{tikzpicture}[scale=0.4][baseline=(current bounding box.center)]
		\tikzstyle{point}=[circle,thick,draw=black,fill=black,inner sep=0pt,minimum width=2pt,minimum height=2pt]
		\tikzstyle{arc}=[shorten >= 8pt,shorten <= 8pt,->, thick]
		
		\node  (v0) at (0,0) {};
		\node[below] at (0,0) {$v_0$};
		\draw[fill] (0,0)  circle (.05);
		\node  (v1) at (0,3) { };
		\node[above]  at (0,3) {$v_1$};
		\draw[fill] (0,3)  circle (.05);
		\node  (v2) at (3,3) { };
		\node[above] at (3,3) {$v_{3}= v$};
		\draw[fill] (3,3)  circle (.05);
		\node  (v3) at (3,0) { };
		\node[below]  at (3,0) {$v_{2}$};
		\draw[fill] (3,0)  circle (.05);

		\node  (v5) at (6,3) { };
		\node[above] at (6,3) {$v_{5}$};
		\draw[fill] (6,3)  circle (.05);
		\node  (v4) at (6,0) { };
		\node[below]  at (6,0) {$v_{4}$};
		\draw[fill] (6,0)  circle (.05);
		
		\node  (v7) at (9,3) { };
		\node[above] at (9,3) {$v_{n-2}$};
		\draw[fill] (9,3)  circle (.05);
		\node  (v6) at (9,0) { };
		\node[below]  at (9,0) {$v_{n-3}$};
		\draw[fill] (9,0)  circle (.05);
		
		\node  (v9) at (12,3) { };
		\node[above] at (12,3) {$v_{n}$};
		\draw[fill] (12,3)  circle (.05);
		\node  (v8) at (12,0) { };
		\node[below]  at (12,0) {$v_{n-1}$};
		\draw[fill] (12,0)  circle (.05);
		
		\node  (p) at (7.5,1.8) { };
		\node[below]  at (7.5,1.8) {$\dots$};

		\draw[thick, bunired, -latex] (v1) -- (v0);
		\draw[thick, bunired, -latex] (v2) -- (v1);
		\draw[thick, bunired, -latex] (v2) -- (v3);
		\draw[thick, bunired, -latex] (v0) -- (v3);
		\draw[thick, bunired, -latex] (v5) -- (v4);
		\draw[thick, bunired, -latex] (v5) -- (v2);
		\draw[thick, bunired, -latex] (v3) -- (v4);
		
		\draw[thick, bunired, -latex] (v7) -- (v6);

		\draw[thick, bunired, -latex] (v9) -- (v7);
		\draw[thick, bunired, -latex] (v9) -- (v8);
		\draw[thick, bunired, -latex] (v6) -- (v8);

	\end{tikzpicture}
	\caption{The graph $\mathtt{O}_n$.}
	\label{fig:manysquares}
\end{figure}
\begin{example}
 Let $\mathtt{O}_n$ be the graph in Figure~\ref{fig:manysquares}, and set $v= v_3$, $e_1 = (v_3,v_2)$, and $e_2 = (v_3,v_1)$. Using the same notation of Criterion~\ref{crit:FDM}, we have that $\tG^2_v$ and $\tG_v^{(1)}$ have a coherent tail, while there is a $\nu$-equivalence $\tI_3 (\cong\tH_{1,1}) \to \tG^{(2)}_{v}$  whose image is the sub-graph with edges $(v_3,v_1),(v_1,v_0),(v_0, v_2)$. Thus, by Criterion \ref{crit:istmo}, $\tG^2_v$, $\tG_v^{(1)}$, and $\tG_v^{(2)}$ have trivial cohomology. 
 By Criterion~\ref{crit:FDM}, it follows that $\Hmu^{*}({\tt O}_n;\bK) =0$.
 \end{example}
 Another class of graphs, important to us, is given by the dandelion graphs:
\begin{defn}\label{dandelion}
Let $\tD_{n,m}$ the graph on $(n+m+1)$ vertices,  and $(m+n)$ edges defined as follows:
\begin{enumerate}
    \item $V(\tD_{n,m}) = \{ v_{0}, w_{1} ,...., w_{n}, x_{1}, ..., x_{m} \}$;
    \item $E(\tD_{n,m}) = \{ (w_i,v_0), (v_{0},x_j) \mid i=1,...,n; j=1,...,m \}$.
\end{enumerate}
The digraph $\tD_{n,m}$ is called a \emph{dandelion graph}.
\end{defn}
\begin{figure}[h]
	\begin{tikzpicture}[scale=0.5][baseline=(current bounding box.center)]
		\tikzstyle{point}=[circle,thick,draw=black,fill=black,inner sep=0pt,minimum width=2pt,minimum height=2pt]
		\tikzstyle{arc}=[shorten >= 8pt,shorten <= 8pt,->, thick]
		
		\node (v0) at (0,0) {};
		\node[above] at (0,0.1) {$v_0$};
		\draw[fill] (v0)  circle (.05);
		\node (w1) at (-2,1.5) {};
		\node[above] at (-2,1.5) {$w_1$};
		\draw[fill] (-2,1.5)  circle (.05);
		\node  (w2) at (-2,0) { };
		\node[left]  at (-2,0) {$w_{2}$};
		\draw[fill] (-2,0)  circle (.05);
		\node  (w3) at (-2,-1.5) { };
		\node[below]  at (-2,-1.5) {$w_{3}$};
		\draw[fill] (-2,-1.5)  circle (.05);
		
		\node  (x1) at (2,1.3) { };
		\node[right]  at (2,1.3) {$x_1$};
		\draw[fill] (2,1.3)  circle (.05);
		
		\node  (x2) at (2,-1.3) { };
		\node[right]  at (2,-1.3) {$x_2$};
		\draw[fill] (2,-1.3)  circle (.05);

		\draw[thick, bunired, -latex] (w1) -- (v0);
		\draw[thick, bunired, -latex] (w2) -- (v0);
		\draw[thick, bunired, -latex] (w3) -- (v0);
		\draw[thick, bunired, -latex] (v0) -- (x1);
		\draw[thick, bunired, -latex] (v0) -- (x2);
	\end{tikzpicture}
	\caption{The  graph $\tD_{3,2}$.}
	\label{fig:nmgraph}
\end{figure}
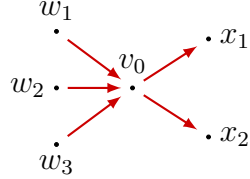
In other words we have a single $(m+n)$-valent vertex $v_0$, all remaining vertices are univalent, there are $n$ edges with target $v_0$, and there are $m$ edges with source $v_0$ -- cf.~Figure~\ref{fig:nmgraph}. 

\begin{rem}\label{rem:dandelion reversal}
If we reverse the orientation of all the edges in $\tD_{k,n-k}$ we obtain $\tD_{n-k,k}$. Then, we have an isomorphism  $P(\tD_{k,n-k}) \cong P(\tD_{n-k,k})$; hence, $\Hmu^n(\tD_{k,n-k};\bK) \cong \Hmu^n(\tD_{n-k,k};\bK)$ for all $n$. 
\end{rem}

The dandelion digraph~$\tD_{n,0}$ is a source with $n$ edges, and the dandelion digraph~$\tD_{0,n}$ is a sink with $n$ edges. 
The dandelion digraph $\tD_{1,1}$ is the 2-step graph $\tI_2$. 
\begin{rem}\label{rem:0dandelion}
If $\tG$  is a source or a sink with $n\geq2$ edges, then $\dim \Hmu^1(\tG;\bK)=n-1$  and $\dim \Hmu^i(\tG;\bK)=0$ for $i\neq 1$; in fact the path poset of a sink (or a source) with $n$ edges is given by a single multipath of length $0$, and $n$ multipaths of length $1$. It follows that, in the case at hand, the multipath chain complex is very simple:
\[0 \rightarrow C_{\mu}^0({\tt D}_{0,n};\bK) \cong  \bK \overset{d^0}{\longrightarrow} C_{\mu}^1({\tt D}_{0,n};\bK) \cong \bK^n \rightarrow 0.\]
Furthermore, the map $d^0$ is injective (since $d^0$ is the map $x\mapsto (\pm x, ...., \pm x)$ for an appropriate choice of signs), giving trivial {co}homology in degree $0$ and a {co}homology group of dimension~$n-1$ in degree $1$.
\end{rem}
An immediate consequence of Remark~\ref{rem:0dandelion} is that we can have multipath cohomology groups of arbitrary dimension (as $\bK$-vector space).  
\begin{prop}\label{prop:1dandelion}
Let $n\geq 1$ be an integer. Then, $\Hmu^*(\tD_{1,n-1};\bK) = 0$.
\end{prop}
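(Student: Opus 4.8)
The plan is to recognise $\tD_{1,n-1}$ as one of the graphs $\tH_{0,m}$ from Figure~\ref{fig:H_nm} and then invoke the ``coherent tail'' form of Criterion~\ref{crit:istmo} recorded in Remark~\ref{rem:tail}.

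First I would dispose of the base case $n=1$: the digraph $\tD_{1,0}$ has vertex set $\{v_0,w_1\}$ and the single edge $(w_1,v_0)$, hence is isomorphic to the coherently oriented linear graph $\tI_1$, so $\Hmu^*(\tD_{1,0};\bK)=\Hmu^*(\tI_1;\bK)=0$ by Example~\ref{ex:In}.

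For $n\geq 2$ set $m\coloneqq n-1\geq 1$ and consider the assignment on vertices $\psi\colon V(\tH_{0,m})\to V(\tD_{1,m})$ given by $v_0\mapsto w_1$, $v_1\mapsto v_0$, and $x_j\mapsto x_j$ for $j=1,\dots,m$. Checking the two families of edges of $\tH_{0,m}$ shows that $\psi$ is a morphism of digraphs, since $(v_0,v_1)\mapsto (w_1,v_0)\in E(\tD_{1,m})$ and $(v_1,x_j)\mapsto (v_0,x_j)\in E(\tD_{1,m})$; as $\psi$ is moreover a bijection on both vertices and edges, it is an isomorphism of digraphs, and in particular a $\nu$-equivalence away from $\{x_1,\dots,x_m\}$ (indeed away from the empty set, cf.~Remark~\ref{rem:nu over empty}). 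Furthermore $\tD_{1,n-1}$ is a tree on $n+1$ vertices, hence acyclic, so the edge $(\psi(v_0),\psi(v_1))=(w_1,v_0)$ is contained in no coherently oriented cycle. All the hypotheses of Criterion~\ref{crit:istmo} are thus met, in the degenerate case of Remark~\ref{rem:tail} in which there are no $w$-vertices, and we conclude $\Hmu^*(\tD_{1,n-1};\bK)=0$.

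There is no genuine obstacle here beyond spotting the isomorphism $\tD_{1,n-1}\cong\tH_{0,n-1}$. As an alternative to citing Criterion~\ref{crit:istmo}, one could argue directly as in its proof: deleting the edge $(w_1,v_0)$ from $\tD_{1,n-1}$ produces a digraph $\tG'$, namely the disjoint union of $\tD_{0,n-1}$ with the isolated vertex $w_1$, and the same cone argument identifies $P(\tD_{1,n-1})$ with $\Cone P(\tG')$, whence $\Hmu^*(\tD_{1,n-1};\bK)=0$ by Theorem~\ref{HomCono}.
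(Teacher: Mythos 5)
Your proof is correct and follows essentially the same route as the paper: identify $\tD_{1,0}$ with $\tI_1$, identify $\tD_{1,n-1}$ with $\tH_{0,n-1}$ for $n\geq 2$, and conclude via the coherent-tail case of Criterion~\ref{crit:istmo} (Remark~\ref{rem:tail}). Your version is in fact slightly more careful, since you spell out the isomorphism $\tD_{1,n-1}\cong\tH_{0,n-1}$ and cite the correct criterion, whereas the paper's one-line proof points to Criterion~\ref{crit:FDM} where Criterion~\ref{crit:istmo} is clearly intended.
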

\proof
Note that $\tD_{1,0}= \tI_1$, and that $\tD_{1,n-1} = \tH_{0,n-1}$. In the former case, the cohomology is trivial by direct computation. In the latter case, the cohomology is trivial by Criterion~\ref{crit:FDM}, as~$\tD_{1,n-1}$ has a coherent tail -- cf.~Remark~\ref{rem:tail}. 
\endproof
\begin{prop}\label{prop:kdandelion}
Let $n > 2$ and $k>0$ be two integers such that $n>k$. Then, we have
\[ \Hmu^i(\tD_{k,n-k};\bK) \cong \begin{cases} \bK^{(k-1)(n-k-1)} & \text{if }i=2\\ 0 & \text{otherwise}. \end{cases}\]
\end{prop}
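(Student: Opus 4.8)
The plan is to compute the cochain complex $\Cmu^*(\tD_{k,n-k};\bK)$ directly, exploiting that it is extremely small. Since every edge of $\tD_{k,n-k}$ is incident to the unique high-valence vertex $v_0$, a multipath can contain at most one edge with target $v_0$ and at most one edge with source $v_0$; hence the multipaths are exactly the spanning subgraphs whose edge set is $\emptyset$, $\{(w_i,v_0)\}$, $\{(v_0,x_j)\}$ or $\{(w_i,v_0),(v_0,x_j)\}$, with respective levels $0,1,1,2$. Writing $m=n-k$, the complex $\Cmu^*(\tD_{k,n-k};\bK)$ is therefore concentrated in degrees $0,1,2$, with $\dim C^0=1$, $\dim C^1=k+m$ and $\dim C^2=km$, so in particular the Euler characteristic is $1-(k+m)+km=(k-1)(m-1)$.

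First I would fix a convenient sign assignment. Since $P(\tD_{k,n-k})$ is downward closed in a Boolean poset, by Remark~\ref{thm:uni_signs} we may compute with the Boolean sign assignment of Example~\ref{exa:sign on Boolean} attached to any total order on $E(\tD_{k,n-k})$; I would pick one in which all edges $e_i\coloneqq(w_i,v_0)$ precede all edges $f_j\coloneqq(v_0,x_j)$. Writing $b_S$ for the basis element of $\Cmu^*(\tD_{k,n-k};\bK)$ indexed by the multipath with edge set $S$, a direct count of the sign exponents then gives $d^0(b_\emptyset)=\sum_i b_{\{e_i\}}+\sum_j b_{\{f_j\}}$, $d^1(b_{\{e_i\}})=-\sum_j b_{\{e_i,f_j\}}$ and $d^1(b_{\{f_j\}})=\sum_i b_{\{e_i,f_j\}}$. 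In particular $d^0$ is injective, so $\Hmu^0(\tD_{k,n-k};\bK)=0$.

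Next I would determine $\ker d^1$: for $v=\sum_i a_i b_{\{e_i\}}+\sum_j c_j b_{\{f_j\}}$ one has $d^1(v)=\sum_{i,j}(c_j-a_i)b_{\{e_i,f_j\}}$, which vanishes exactly when all the $a_i$ and $c_j$ coincide, so $\ker d^1$ is the line spanned by $d^0(b_\emptyset)$. (Equivalently, the transpose of $d^1$ is, up to signs on one of the two vertex classes, the vertex--edge incidence matrix of the connected bipartite graph $K_{k,m}$, which has rank $(k+m)-1$ over every field.) Hence $\Hmu^1(\tD_{k,n-k};\bK)=\ker d^1/\mathrm{im}\,d^0=0$, while $d^2=0$ and $C^{\geq 3}=0$ give
\[\dim\Hmu^2(\tD_{k,n-k};\bK)=\dim C^2-\dim\mathrm{im}\,d^1=km-\bigl((k+m)-1\bigr)=(k-1)(n-k-1),\]
and all remaining cohomology groups vanish. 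The same computation also covers the boundary cases $k=1$ and $n-k=1$, where it returns $0=(k-1)(n-k-1)$, consistently with Proposition~\ref{prop:1dandelion} and Remark~\ref{rem:dandelion reversal}.

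I do not expect a genuine obstacle: the two points needing care are the choice of sign assignment (settled by Remark~\ref{thm:uni_signs} and Example~\ref{exa:sign on Boolean}) and the rank of $d^1$ over an arbitrary field, which follows from connectedness of $K_{k,m}$ rather than from any bipartiteness or characteristic considerations. A more structural alternative would be to apply Theorem~\ref{finedimondo} at $v_0$, presenting $P(\tD_{k,n-k})$ as an iterated gluing over $P(\tG_{v_0}^{k})$ of copies of the posets $P(\tG_{v_0}^{(h)})$, and then feeding Remark~\ref{rem:0dandelion}, Proposition~\ref{prop:1dandelion} and the Mayer--Vietoris long exact sequence of Theorem~\ref{thm:MVposets} into an induction on $k$; the bookkeeping there is heavier, so I would present the direct computation.
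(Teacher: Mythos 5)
Your proof is correct, but it takes a different route from the paper. The paper proves Proposition~\ref{prop:kdandelion} by induction on $n$: it applies Theorem~\ref{finedimondo} at $v_0$ to write $P(\tD_{k,n-k})$ as a gluing of $P(\tD_{1,n-k})$ and $P(\tD_{k-1,n-k})$ over $P(\tD_{0,n-k})$, then feeds Proposition~\ref{prop:1dandelion}, Remark~\ref{rem:0dandelion} and the Mayer--Vietoris sequence of Theorem~\ref{thm:MVposets} into the induction, extracting the dimension of $\Hmu^2$ from additivity on a short exact sequence --- exactly the ``structural alternative'' you sketch and set aside. Your direct computation is sound: the enumeration of multipaths (at most one edge into $v_0$ and at most one out of it), the choice of the Boolean sign assignment justified by Remark~\ref{thm:uni_signs} and Example~\ref{exa:sign on Boolean}, the explicit formulas for $d^0$ and $d^1$, and the rank count $\dim\ker d^1 = 1$ (valid over any field since $K_{k,m}$ is connected) all check out, giving $\dim\Hmu^2 = km-(k+m-1)=(k-1)(n-k-1)$ and vanishing elsewhere. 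What your approach buys is self-containedness and an explicit description of the cocycles; what the paper's approach buys is a worked illustration of its main decomposition and Mayer--Vietoris machinery, which is part of the point of that section. Either argument is acceptable; yours is arguably the shorter and more transparent one for this particular family, and it is essentially the chain-level shadow of the later observation (Example~\ref{ex:K-nm}) that $X(\tD_{k,n-k})$ is the complete bipartite graph $K_{k,n-k}$.
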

\proof
We proceed by induction on $n$. If $n=3$,  Proposition~\ref{prop:1dandelion} and Remark~\ref{rem:dandelion reversal} imply  $\Hmu^*(\tD_{2,1};\bK)\cong\Hmu^*(\tD_{1,2};\bK) =0$.
Now, set $\tG = \tD_{k,n-k}$ and $v = v_0$.
By Theorem \ref{finedimondo}, we have
\[ P(\tD_{k,n-k}) \cong \nabla_{P(\tD_{0,n-k})}\underbrace{\left(P(\tD_{1,n-k}),\dots, P(\tD_{1,n-k})\right)}_{\text{$k$ copies}} \overset{(*)}{\cong} \nabla_{P(\tD_{0,n-k})}\left(P(\tD_{1,n-k}),P(\tD_{k-1,n-k})\right)\]
where the isomorphism marked with $(*)$ follows from the associativity of the gluing (and from Theorem \ref{finedimondo}).
By applying Theorem~\ref{thm:MVposets} and the inductive hypothesis, it follows that $\Hmu^i(\tD_{k,n-k};\bK) =0$ for $i\geq 3$, and that the sequence
\[ 0 \rightarrow \Hmu^1(\tD_{0,n-k};\bK) \rightarrow \Hmu^2(\tD_{k,n-k};\bK) \rightarrow \Hmu^2(\tD_{k-1,n-k};\bK) \rightarrow 0\ ,\]
is exact. 
The assertion is now immediate from the fact that the dimension function is additive on short exact sequences.
\endproof
We conclude the section showing that there exist directed graphs with multipath cohomology of arbitrary high rank in arbitrary high degree. 

\begin{figure}[h]
	\centering
	\begin{subfigure}[b]{0.3\textwidth}
		\centering
\begin{tikzpicture}[baseline=(current bounding box.center)]
		\tikzstyle{point}=[circle,thick,draw=black,fill=black,inner sep=0pt,minimum width=2pt,minimum height=2pt]
		\tikzstyle{arc}=[shorten >= 8pt,shorten <= 8pt,->, thick]
        \node[above] at (-.5,1) {$\tt{G'}$};
        \node at (-1,0) {$\tt{G''}$};
		\draw[dashed] (-1,0)  circle (.5);
		\draw[dashed] (-1,0)  circle (2.5 and 1);
		\node[above] (v0) at (0,0) {$v_0$};
		\draw[fill] (0,0)  circle (.05);
		\node[fill, white, below] at (1.5,0) {}  circle (.25);
		\node[below] (v1) at (1.5,0) {$w$};
		\draw[fill] (1.5,0)  circle (.05);
		\node[above] (v2) at (3,1) {$v_{1}$};
		\node[above] at (2.25,.55) {$e_{1}$};
		\node[below] at (2.25,-.55) {$e_{2}$};
		\draw[fill] (3,1)  circle (.05);
		\node[above] (v3) at (3,-1) {$v_{2}$};
		\draw[fill] (3,-1)  circle (.05);
		\draw[thick, bunired] (-0.35,0) -- (-0.15,0);
		\draw[thick, bunired, latex-] (1.35,0) -- (0.15,0);
		\draw[thick, bunired, -latex] (1.65,0.05) -- (2.85,0.95);
		\draw[thick, bunired, -latex] (1.65,-0.05) -- (2.85,-0.95);
	\end{tikzpicture}
		\caption{\phantom{  A   }}
	\end{subfigure}
	\hspace{0.1\textwidth}
	\begin{subfigure}[b]{0.5\textwidth}
	\centering
	\begin{tikzpicture}[baseline=(current bounding box.center)]
		\tikzstyle{point}=[circle,thick,draw=black,fill=black,inner sep=0pt,minimum width=2pt,minimum height=2pt]
		\tikzstyle{arc}=[shorten >= 8pt,shorten <= 8pt,->, thick]
		
        \node[above] at (-.5,1) {$\tt{G'}$};
		\node at (-1,0) {$\tt{G''}$};
		\draw[dashed] (-1,0)  circle (.5);
		\draw[dashed] (-1,0)  circle (2.5 and 1);
		\node[above] (v0) at (0,0) {$v_0$};
		\draw[fill] (0,0)  circle (.05);
		\node[fill, white, below] at (1.5,0) {}  circle (.25);
		\node[below] (v1) at (1.5,0) {$w$};
		\draw[fill] (1.5,0)  circle (.05);
		\node[above] (v2) at (3,1) {$v_{1}$};
		\node[above] at (2.25,.55) {$e_{1}$};
		\node[below] at (2.25,-.55) {$e_{2}$};
		\draw[fill] (3,1)  circle (.05);
		\node[above] (v3) at (3,-1) {$v_{2}$};
		\draw[fill] (3,-1)  circle (.05);

		\draw[thick, bunired] (-0.35,0) --  (-0.15,0);
		\draw[thick, bunired, -latex] (1.35,0) -- (0.15,0);
		\draw[thick, bunired, -latex] (2.85,0.95) -- (1.65,0.05);
		\draw[thick, bunired, -latex] (2.85,-0.95) -- (1.65,-0.05);
	\end{tikzpicture}
				\caption{ \phantom{C }}
\end{subfigure}
	\caption{Configurations of a digraph $\tG$ with subgraphs $\nu$ equivalent away from $v_0$ to  $\tD_{1,2}$ and $\tD_{2,1}$.}
\label{fig:y}
	\begin{subfigure}[b]{0.3\textwidth}
\begin{tikzpicture}[baseline=(current bounding box.center)]
		\tikzstyle{point}=[circle,thick,draw=black,fill=black,inner sep=0pt,minimum width=2pt,minimum height=2pt]
		\tikzstyle{arc}=[shorten >= 8pt,shorten <= 8pt,->, thick]
		
		\node at (-1,-2) {$\tt{G''}$};
		\draw[] (-1,-2)  circle (.5);
		\node[above] (v0) at (0,-2) {$v_0$};
		\draw[fill] (0,-2)  circle (.05);
		\node[above] (v1) at (1.5,-2) {$w$};
		\draw[fill] (1.5,-2)  circle (.05);
		\node[above] (v2) at (3,-1) {$v_{1}$};
		\draw[fill] (3,-1)  circle (.05);
		\node[above] (v3) at (3,-3) {$v_{2}$};
		\draw[fill] (3,-3)  circle (.05);
		
		\draw[thick, bunired] (-0.35,-2) -- (-0.15,-2);
		\draw[thick, bunired, -latex] (0.15,-2) -- (1.35,-2);
	\end{tikzpicture}
		\caption{\phantom{  A   }}
	\end{subfigure}
	\hspace{0.1\textwidth}
\begin{subfigure}[b]{0.3\textwidth}
	\begin{tikzpicture}[baseline=(current bounding box.center)]
		\tikzstyle{point}=[circle,thick,draw=black,fill=black,inner sep=0pt,minimum width=2pt,minimum height=2pt]
		\tikzstyle{arc}=[shorten >= 8pt,shorten <= 8pt,->, thick]
		
		\node at (-1,-2) {$\tt{G''}$};
		\draw[] (-1,-2)  circle (.5);
		\node[above] (v0) at (0,-2) {$v_0$};
		\draw[fill] (0,-2)  circle (.05);
		\node[above] (v1) at (1.5,-2) {$w$};
		\draw[fill] (1.5,-2)  circle (.05);
		\node[above] (v2) at (3,-1) {$v_{1}$};
		\draw[fill] (3,-1)  circle (.05);
		\node[above] (v3) at (3,-3) {$v_{2}$};
		\draw[fill] (3,-3)  circle (.05);

		\draw[thick, bunired] (-0.35,-2) --  (-0.15,-2);
		\draw[thick, bunired, -latex] (1.35,-2) -- (0.15,-2);
	\end{tikzpicture}
				\caption{ \phantom{C }}
\end{subfigure}
	\caption{The graph $\tG'$ as subgraph of $\tG$ in the two cases we consider.}\label{fig:yy}
\end{figure}
\begin{lem}\label{lem:shift homology construction}
Given a digraph $\tG'$ with  a vertex~$w$ of valence~$1$, there exists a digraph~$\tG$ such that $\Hmu^*(\tG';\bK)\cong\Hmu^{*-1}(\tG;\bK)$.
\end{lem}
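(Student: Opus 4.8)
The plan is to build $\tG$ by grafting a coherently oriented two-pronged fork onto $\tG'$ at the univalent vertex~$w$, and then to recognise $P(\tG)$ as a gluing over $P(\tG')$ whose two glued factors are acyclic; the Mayer--Vietoris machinery of Theorem~\ref{thm:MVposets} then produces the degree shift. Up to reversing every orientation (which does not change multipath cohomology, since $P(\tG)\cong P(\tG^{\mathrm{op}})$), I may assume the unique edge incident to~$w$ is $v_0\to w$, so that $w$ is a target. I then set $\tG$ to be $\tG'$ with two new vertices $v_1,v_2$ and the two new edges $w\to v_1$ and $w\to v_2$ adjoined. Thus $\tG'\subset\tG$, a neighbourhood of $w$ in $\tG$ is $\nu$-equivalent to the dandelion $\tD_{1,2}$, and this is precisely the configuration drawn in Figures~\ref{fig:y} and~\ref{fig:yy}. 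By construction $w$ is now the source of exactly the two edges $w\to v_1$ and $w\to v_2$.

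First I would apply the source version of Theorem~\ref{finedimondo} at the vertex~$w$, which yields the isomorphism of posets
\[
P(\tG)\;\cong\;\nabla_{P(\tG_w^{2})}\!\left(P(\tG_w^{(1)}),\,P(\tG_w^{(2)})\right),
\]
where $\tG_w^{2}=\tG\setminus\{w\to v_1,\,w\to v_2\}$ and $\tG_w^{(h)}=\tG_w^{2}\cup\{w\to v_h\}$ for $h=1,2$. The base $\tG_w^{2}$ is $\tG'$ together with the two isolated vertices $v_1,v_2$, so by Remark~\ref{DisjointUnion} one has $\Hmu^{*}(\tG_w^{2};\bK)\cong\Hmu^{*}(\tG';\bK)$.

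The crucial step is to verify that the two glued factors are acyclic. Each $\tG_w^{(h)}$ is obtained from $\tG'$ by attaching the single pendant edge $w\to v_h$ at the formerly univalent vertex~$w$; since $v_h$ is univalent, the path $v_0\to w\to v_h$ is coherently oriented, and $w$ carries no further outgoing edge, the edge $w\to v_h$ is a coherent tail. Consequently $\Hmu^{*}(\tG_w^{(h)};\bK)=0$ by Remark~\ref{rem:tail}. Equivalently, adjoining $w\to v_h$ to an arbitrary multipath of $\tG'$ always produces a multipath (no cycle can appear because $v_h$ is new), so that $P(\tG_w^{(h)})\cong\Cone P(\tG')$ and the vanishing follows from Theorem~\ref{HomCono}.

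Finally, with both glued factors acyclic and the base cohomologically identified with~$\tG'$, Remark~\ref{cohnabla} --- that is, the Mayer--Vietoris sequence of Theorem~\ref{thm:MVposets} with vanishing factor terms --- identifies $\H^{n}$ of the gluing with the cohomology of the base in degree $n-1$. This gives $\Hmu^{n}(\tG;\bK)\cong\Hmu^{n-1}(\tG';\bK)$ for all~$n$, i.e. the multipath cohomologies of $\tG'$ and $\tG$ agree up to the single degree shift recorded in the statement, which is what the lemma asserts. I expect the only real obstacle to be the acyclicity of the factors $\tG_w^{(h)}$: the whole argument relies on $w$ having valence one in $\tG'$, so that after adjoining a single prong the vertex $w$ lies on a genuine coherent tail and neither a new branching nor a new cycle is created; the treatment of the isolated vertices in the base and the orientation-reversal reduction are then routine.
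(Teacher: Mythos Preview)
Your proof is correct and follows essentially the same route as the paper's: construct $\tG$ by grafting a two-pronged source (or sink) at~$w$, decompose $P(\tG)$ via Theorem~\ref{finedimondo}, kill the two factors using the coherent-tail criterion (Remark~\ref{rem:tail}), and read off the degree shift from Remark~\ref{cohnabla}. The only cosmetic differences are that you reduce to one orientation case via $P(\tG)\cong P(\tG^{\mathrm{op}})$ rather than treating both, and that you handle the two isolated vertices in the base explicitly via Remark~\ref{DisjointUnion} (the paper silently identifies $P(\tG_w^{2})$ with $P(\tG')$); your alternative acyclicity argument via $P(\tG_w^{(h)})\cong\Cone P(\tG')$ is also valid and in the spirit of the paper.
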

\begin{proof}
 Let $e \in E(\tG')$ be the only edge incident to $w$. We define $\tG$ as follows;  if $s(e)=w$, glue  a linear sink over $w$ to $\tG'$  -- cf.~Figure \ref{fig:nnstep}, otherwise glue  a linear source.

In the notations of Figures \ref{fig:y} and \ref{fig:yy}, by Theorem \ref{finedimondo} the path poset of $\tG$ is the gluing of the path posets $P(\tG_w^{(1)})$ and $P(\tG_w^{(2)})$ over $P(\tG')$. By Remark~\ref{rem:tail} the subgraphs $ \tG_w^{(1)}$ and $ \tG_w^{(2)}$ have trivial cohomology; hence, by Remark~\ref{cohnabla}, we have $\Hmu^*(\tG;\bK)=\Hmu^{*-1}(\tG';\bK)$.
\end{proof}
{Observe that the digraph $\tG$ constructed in Lemma \ref{lem:shift homology construction} has again (at least) one vertex of valence 1 and consequently the construction can be iterated.}
\begin{prop}\label{prop:comphom}
For all $i, n \in \bN$, there exists a digraph $\tG$ such that $\mathrm{dim }_\bK \left(\Hmu^i(\tG; \bK)\right)=n$.
\end{prop}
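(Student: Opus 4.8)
The plan is to assemble the required digraph from two ingredients already available in the paper: sinks (equivalently, sources), whose multipath cohomology is concentrated in degree~$1$ and can there have any prescribed dimension by Remark~\ref{rem:0dandelion}; and the degree-raising construction of Lemma~\ref{lem:shift homology construction}, which converts a digraph possessing a vertex of valence one into a new digraph whose multipath cohomology is that of the original one shifted up by one cohomological degree and which, by the observation recorded immediately after that lemma, again possesses a vertex of valence one. Iterating the shift, starting from a sink with the right number of edges, then transports an $n$-dimensional cohomology group from degree~$1$ up to any prescribed degree~$i$.

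In detail, for $n=0$ one takes $\tG=\tI_1$, whose multipath cohomology is trivial in every degree by Example~\ref{ex:In}, so that $\dim_\bK\Hmu^i(\tG;\bK)=0$ for all $i$. For $n\ge 1$ I would prove, by induction on $i\ge 1$, that there is a digraph $\tG_i$ carrying a vertex of valence one with $\Hmu^i(\tG_i;\bK)\cong\bK^n$ and $\Hmu^j(\tG_i;\bK)=0$ for $j\neq i$. For the base case $i=1$, take $\tG_1$ to be a sink on $n+1\ge 2$ edges (for instance the dandelion $\tD_{n+1,0}$ of Definition~\ref{dandelion}): Remark~\ref{rem:0dandelion} gives $\dim_\bK\Hmu^1(\tG_1;\bK)=n$ with all other cohomology groups zero, and the $n+1$ pendant vertices of $\tG_1$ have valence one. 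For the passage from $i$ to $i+1$, feed $\tG_i$ into Lemma~\ref{lem:shift homology construction} to obtain a digraph $\tG_{i+1}$, again carrying a vertex of valence one, with
\[
\Hmu^{k}(\tG_{i+1};\bK)\;\cong\;\Hmu^{k-1}(\tG_i;\bK)\qquad\text{for every }k ;
\]
the inductive hypothesis then yields $\Hmu^{i+1}(\tG_{i+1};\bK)\cong\bK^n$ and vanishing in all other degrees. Taking $\tG=\tG_i$ proves the statement for every $i\ge 1$ and every $n$.

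I do not anticipate a real obstacle here, since the substance lives in Remark~\ref{rem:0dandelion} and Lemma~\ref{lem:shift homology construction}; the remaining work is to phrase it as an induction. The point that genuinely needs care is the iterability of the shift: the property ``having a vertex of valence one'' must be propagated along the induction, which is precisely what the remark following Lemma~\ref{lem:shift homology construction} guarantees, with the starting sink supplying this property at step~$1$. One also has to keep the direction of the shift straight -- the construction \emph{raises} the cohomological degree by one at each step, which is exactly what allows one to reach an arbitrary~$i$. Finally, the case $i=0$ is not covered by this scheme, because $\Hmu^0(-;\bK)$ is always at most one-dimensional (in cohomological degree~$0$ the multipath cochain complex is spanned by the single edgeless multipath); so the statement is to be read for $i\ge 1$, the admissible values $n\in\{0,1\}$ in degree~$0$ being realised by $\tI_1$ and by an isolated vertex respectively.
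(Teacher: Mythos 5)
Your proposal is correct and follows essentially the same route as the paper: start from a sink with $n+1$ edges, whose cohomology is $\bK^n$ concentrated in degree $1$ (Remark~\ref{rem:0dandelion}), and iterate Lemma~\ref{lem:shift homology construction} to push that group up to degree $i$. Your extra remarks -- that the shift can be iterated because a univalent vertex persists, and that degree $i=0$ only admits $n\in\{0,1\}$ -- are accurate refinements of the paper's (terser) argument rather than deviations from it.
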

\begin{proof}
The multipath cohomology of a  sink graph $\tG$ with $n+1$ edges is concentrated in degree one, where it is $\Hmu^1(\tG;\bK)\cong \bK^n$. By applying iteratively Lemma~\ref{lem:shift homology construction}, we obtain digraphs~{$\tG$ with $\mathrm{dim }_\bK \left(\Hmu^i(\tG; \bK)\right)=n$} for every $i$. 
\end{proof}

\section{Oriented linear graphs}\label{sec:dir_line_graphs}
This section is devoted to the study of the multipath cohomology of oriented linear graphs. 
{Firstly}, we focus on the case of  coefficients in a field~$\bK$. In this case, we achieve a complete description of their cohomology groups. Then, we analyse the general case of  coefficients in a graded algebra~$A$, and we prove some recursive formulae for the graded Euler characteristic.

\subsection{Multipath cohomology of linear graphs}

An \emph{oriented linear graph} \tL~(on $n$ vertices) is a directed graph with vertices $\{ v_0 , ..., v_{n-1}\}$, such that, for all~$i\in\{1,...,n-1\}$, exactly one among $(v_i,v_{i-1})$ and $(v_{i-1},v_i)$ belongs to~$E(\tL)$, and there are no other edges. 
An oriented linear graph~{$\tL$} is called \emph{alternating} if
whenever $(v_{i-1},v_i)\in E(\tL)$ for some~$i<n-1$, we have~$(v_{i+1},v_i)\in E(\tL)$ and, analogously, if $(v_{i},v_{i-1})\in E(\tL)$ then $(v_{i},v_{i+1})\in E(\tL)$. 
We denote by $\tA_n$ an alternating linear graph on $n+1$ vertices.
Observe that the alternating graph~$\tA_n$ is unique up to orientation reversing. 

\begin{defn}
A  vertex  of an oriented linear graph~$\tL$ is called \emph{unstable} if it is both a source and a target, and \emph{stable} otherwise. We denote by $\mathrm{SV}(\tL)$ the set of stable vertices of~$\tL$.
\end{defn}
 
\begin{figure}
    \centering
    \begin{subfigure}[t]{0.25\textwidth}
    \centering
    \begin{tikzpicture}
    \node (a) at (0,0) {};
    \draw[fill] (a) circle (.05);
    
    \node (b) at (1,0) {};
    \draw[fill] (b) circle (.05);

    \node (c) at (2,0) {};
    \draw[fill] (c) circle (.05);

    \draw[dashed] (2.75,0) circle (.75);
    \node at (2.75,0) {$\tG^{\prime \prime}$};

    \node[above] at (.5,0) {$e_1$};
    \node[above] at (1.5,0) {$e_2$};
    
    \draw[thick, bunired] (a) -- (b);
    \draw[thick, bunired] (c) -- (b);
    \end{tikzpicture}
    \subcaption{The graph $\tG$.}
    \label{subfig:G}
    \end{subfigure}
\hspace{.05\textwidth}
   \begin{subfigure}[t]{0.25\textwidth}
   \centering
    \begin{tikzpicture}
    
    \node (b) at (1,0) {};
    \draw[fill] (b) circle (.05);

    \node (c) at (2,0) {};
    \draw[fill] (c) circle (.05);

    \draw[dashed] (2.75,0) circle (.75);
    \node at (2.75,0) {$\tG^{\prime \prime}$};
    
    \node[above] at (1.5,0) {$e_2$};

    \draw[thick, bunired] (c) -- (b);
    \end{tikzpicture}
    \subcaption{The graph $\tG'$.}
    \label{subfig:Gprime}
    \end{subfigure}
\hspace{.05\textwidth}
        \begin{subfigure}[t]{0.25\textwidth}
    \centering
    \begin{tikzpicture}
    \node (a) at (0,0) {};
    \node[above]  at (0,0) {$v$};
    \draw[fill] (a) circle (.05);
    
    \node (b) at (1,0) {};
    \draw[fill] (b) circle (.05);

    \node (c) at (2,0) {};
    \draw[fill] (c) circle (.05);

    \draw[dashed] (2.75,0) circle (.75);
    \node at (2.75,0) {$\tG^{\prime \prime}$};

    \node[above] at (.5,0) {$e_1$};
    \node[above] at (1.5,0) {$e_2$};
    
    \draw[thick, gray, dashed] (a) -- (b);
    \draw[thick, bunired] (c) -- (b);
    \end{tikzpicture}
    \subcaption{The graph $\tG^\prime$ embedded as a spanning subgraph of~$\tG$ ($e_1$ is missing).}
    \label{subfig:GprimeInG}
    \end{subfigure}
    
    \vspace{1.5em}
       \begin{subfigure}[b]{0.4\textwidth}
   \centering
    \begin{tikzpicture}

    \node (a) at (0,0) {};
    \draw[fill] (a) circle (.05);

    \node (b) at (1,0) {};
    \draw[fill] (b) circle (.05);

    \node (c) at (2,0) {};
    \draw[fill] (c) circle (.05);

    \draw[dashed] (2.75,0) circle (.75);
    \node at (2.75,0) {$\tG^{\prime \prime}$};
    
    \node[above] at (.5,0) {$e_1$};
    \node[above] at (1.5,0) {$e_2$};

    \draw[thick, bunired, -latex] (a) -- (b);
    \draw[thick, bunired, latex-] (c) -- (b);
    \end{tikzpicture}
    \subcaption{Coherent orientation of $e_1$ and $e_2$.}
    \label{subfig:coherente1e2}
    \end{subfigure}
\hspace{.05\textwidth}
        \begin{subfigure}[b]{0.4\textwidth}
    \centering
    \begin{tikzpicture}
    \node (a) at (0,0) {};
    \draw[fill] (a) circle (.05);
    
    \node (b) at (1,0) {};
    \draw[fill] (b) circle (.05);

    \node (c) at (2,0) {};
    \draw[fill] (c) circle (.05);

    \draw[dashed] (2.75,0) circle (.75);
    \node at (2.75,0) {$\tG^{\prime \prime}$};

    \node[above] at (.5,0) {$e_1$};
    \node[above] at (1.5,0) {$e_2$};
    
    \draw[thick, bunired, -latex] (a) -- (b);
    \draw[thick, bunired, -latex] (c) -- (b);
    \end{tikzpicture}
    \subcaption{Non-coherent orientation of $e_1$ and $e_2$.}
    \label{subfig:noncohenrete1e2}
    \end{subfigure}
    \caption{The graphs $\tG$, $\tG^{\prime}$, $\tG^{\prime \prime}$, and the relative orientations of $e_1$ and $e_2$.}
    \label{fig:SECLeafRem}
\end{figure}
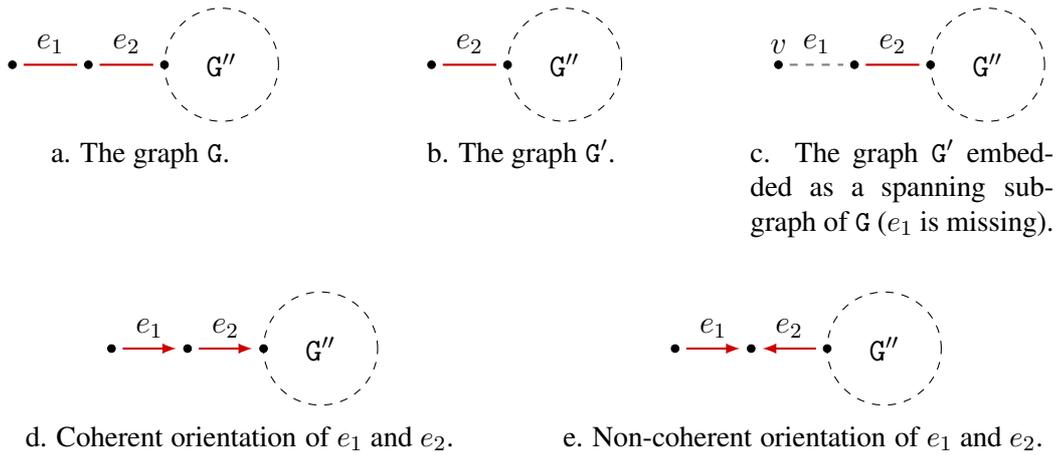

Our aim is to show that the cohomology of an oriented linear graph~$\tL$ is related to the number of stable vertices in $\tL$, and their relative distance.

\begin{defn}
Let $v_i, \, v_j$ be vertices of an oriented linear graph $\tL$. The distance $d(v_i,v_j)$ is the length of the unique simple path, if it exists, between them, and it is set to $-\infty$ otherwise.
\end{defn}
 
For an oriented linear graph~$\tL$ the property $D(k)$ is defined as follows: 
\begin{equation}\label{eq:propDk}
D(k):\quad
\forall \ v,w \in \mathrm{SV}(\tL), \quad d(v,w)\leq k \ .
\end{equation}
A disjoint union of oriented linear graphs satisfies the property $D(k)$ if each component does.
Observe that the set of oriented linear graphs is filtered by the above property; each linear graph satisfies $D(k)$ for some $k$, and if $\tL$ satisfies~$D(k)$, then it also satisfies $D(k+1)$. Furthermore, the alternating graphs satisfy the property $D(1)$, and they are the only connected graphs satisfying it. The graph $\tI_n$ satisfies the property~$D(n)$.
Observe that, if an oriented linear graph~$\tL$ satisfies the property $D(n)$ for $n>2$, then $\tL$ has trivial cohomology. In fact, if there exists a pair of stable vertices at distance grater than~$2$, then there exists a $\nu$-equivalence $f\colon \tI_3 \rightarrow \tL$ away from $v_0, v_3 \in V(\tI_3)$; by Criterion~\ref{crit:istmo}, $\Hmu^*(\tL, \bK)=0$.

\begin{rem}\label{subgraphstablepoints}
If $\tL$ satisfies the property $D(n)$ then each of its subgraphs also satisfies $D(n)$.
\end{rem}

By the above observations, a complete description of the multipath cohomology of oriented linear graphs can be achieved by studying graphs satisfying the property $D(2)$. As a first step, we start with oriented linear graphs satisfying $D(1)$.
Since the cohomology of a disjoint union of linear graphs is the tensor product over its components (cf.~Remark~\ref{DisjointUnion}), we restrict to the case of connected ones, i.e.~the alternating graphs.

\begin{thm}\label{LinAlt}
{Let $\tA_n$ be an alternating graph. Then, we have the following isomorphisms
 \begin{equation}
\Hmu^*(\tA_n, \bK) \cong
\left\{
	\begin{array}{lll}
    \Hmu^*(\tA_{n-1}, \bK) & \mbox{ if } \;  n \equiv 0 \mod 3,\\
    	0 & \mbox{ if } \; n \equiv 1 \mod 3, \\
      \Hmu^{*-1}(\tA_{n-2},\bK) & \mbox{ if } \;  n \equiv 2 \mod 3.\\
	\end{array}
\right.\label{eq:recursion_H_alternating}
\end{equation}
depending on the congruence class of $n$ modulo $3$.}
\end{thm}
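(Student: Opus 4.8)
The plan is to argue by strong induction on $n$, establishing simultaneously the stated recursion and the following auxiliary claim: \emph{for every $m\ge 3$ with $m\equiv 0\bmod 3$ and every leaf $\ell$ of $\tA_m$ with incident edge $e_\ell$, the restriction map $\Hmu^*(\tA_m)\to\Hmu^*(\tA_m\setminus\{e_\ell\})$ induced by the spanning-subgraph inclusion is an isomorphism} (here $\tA_m\setminus\{e_\ell\}\cong\{\ast\}\sqcup\tA_{m-1}$, so this is a map $\Hmu^*(\tA_m)\to\Hmu^*(\tA_{m-1})$). First I would normalise the orientation of $\tA_n$ -- legitimate since $P(\tG)\cong P(\tG^{\mathrm{op}})$ -- so that near the leaf $v_0$ one has $v_0\xrightarrow{e_1}v_1\xleftarrow{e_2}v_2$, making $v_1$ the target of exactly the two edges $e_1,e_2$. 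Applying Theorem~\ref{finedimondo} at $v_1$ yields $P(\tA_n)\cong\nabla_{P(\tG_0)}(P(\tG_1),P(\tG_2))$ with $\tG_0=\tA_n\setminus\{e_1,e_2\}$, $\tG_1=\tA_n\setminus\{e_2\}$, $\tG_2=\tA_n\setminus\{e_1\}$, and a direct inspection of Definition~\ref{def:olg} identifies $\tG_1\cong\tI_1\sqcup\tA_{n-2}$, $\tG_2\cong\{\ast\}\sqcup\tA_{n-1}$ and $\tG_0\cong\{\ast\}\sqcup\{\ast\}\sqcup\tA_{n-2}$. By Remark~\ref{DisjointUnion} and Example~\ref{ex:In} this gives $\Hmu^*(\tG_1)=0$, $\Hmu^*(\tG_2)\cong\Hmu^*(\tA_{n-1})$ and $\Hmu^*(\tG_0)\cong\Hmu^*(\tA_{n-2})$.

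Next I would feed $\tG_0,\tG_1,\tG_2$ into the Mayer--Vietoris sequence of Theorem~\ref{thm:MVposets}. Its leftmost term $C^*(\nabla_{P(\tG_0)}(P(\tG_1),P(\tG_2));\bK)$ coincides with $\Cmu^*(\tA_n;\bK)$, because the poset isomorphism of Theorem~\ref{finedimondo} respects the level function (the level of a multipath is its length). Cancelling the acyclic summand $\Hmu^*(\tG_1)$ then leaves the long exact sequence
\[
\cdots\to\Hmu^{i-1}(\tA_{n-2})\xrightarrow{\partial}\Hmu^i(\tA_n)\xrightarrow{\rho}\Hmu^i(\tA_{n-1})\xrightarrow{g}\Hmu^i(\tA_{n-2})\to\cdots,
\]
where $\rho$ is the restriction along $\tG_2\hookrightarrow\tA_n$ (deletion of the leaf edge $e_1$) and $g$ is the restriction along $\tG_0\hookrightarrow\tG_2$ which, up to the spectator isolated vertex, is the deletion of the leaf edge $e_2$ of $\tA_{n-1}$.

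The induction step then splits by the class of $n$ modulo $3$. If $n\equiv 0$ then $n-2\equiv 1$, so $\Hmu^*(\tA_{n-2})=0$ by the recursion hypothesis, the sequence collapses to an isomorphism $\rho\colon\Hmu^*(\tA_n)\xrightarrow{\sim}\Hmu^*(\tA_{n-1})$, and this is at once the recursion in this case and (since $\rho$ is a leaf-edge restriction) a verification of the auxiliary claim for $\tA_n$. If $n\equiv 2$ then $n-1\equiv 1$, so $\Hmu^*(\tA_{n-1})=0$, and the sequence collapses to $\partial\colon\Hmu^{*-1}(\tA_{n-2})\xrightarrow{\sim}\Hmu^*(\tA_n)$, which is the recursion in this case. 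If $n\equiv 1$ then $n-1\equiv 0$, so the auxiliary claim applied to $\tA_{n-1}$ tells us $g$ is an isomorphism; a diagram chase then forces $\rho=0$ and all connecting maps $\partial$ to vanish, whence $\Hmu^*(\tA_n)=0$, as required (the auxiliary claim is used only in this case, and only for $m=n-1\equiv 0$). The base cases $n=0,1,2$ are the point, $\tI_1$ (acyclic, Example~\ref{ex:In}), and the linear sink on three vertices ($\Hmu^1\cong\bK$, the rest zero, Remark~\ref{rem:0dandelion}); the auxiliary claim is vacuous below $n=3$.

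The main obstacle is the bookkeeping that identifies the map $g$ in the Mayer--Vietoris sequence for $\tA_n$ with precisely the leaf-edge restriction map of $\tA_{n-1}$ covered by the auxiliary claim: one must keep track of which edge is being deleted, discard the harmless isolated vertices generated by the decomposition, and -- when proving the auxiliary claim itself -- observe that the neighbour of the chosen leaf of $\tA_m$ may be a \emph{source} rather than a target of two edges, in which case the decomposition is obtained from the source-vertex variant of Theorem~\ref{finedimondo} (via $P(\tG)\cong P(\tG^{\mathrm{op}})$). Everything else is a routine exact-sequence argument.
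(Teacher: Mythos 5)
Your proposal is correct and follows essentially the same route as the paper: the same application of Theorem~\ref{finedimondo} at the $2$-valent target vertex adjacent to a leaf, yielding the gluing of $P(\tA_n\setminus\{e_2\})$ (acyclic -- the paper sees it as a cone over $P(\tA_n\setminus\{e_1,e_2\})$, you see it as $P(\tI_1\sqcup\tA_{n-2})$, which amounts to the same thing) and $P(\tA_n\setminus\{e_1\})\cong P(\{\ast\}\sqcup\tA_{n-1})$ over $P(\{\ast\}\sqcup\{\ast\}\sqcup\tA_{n-2})$, followed by the same Mayer--Vietoris long exact sequence. Your write-up is in fact more complete than the paper's, which compresses the conclusion into ``the assertion now follows by an induction argument'': in the case $n\equiv 1\bmod 3$ the two outer groups $\Hmu^*(\tA_{n-1})$ and $\Hmu^*(\tA_{n-2})$ are both one-dimensional in the same degree, so the exact sequence alone does not force $\Hmu^*(\tA_n)=0$, and your auxiliary claim that the leaf-edge restriction $\Hmu^*(\tA_m)\to\Hmu^*(\tA_{m-1})$ is an isomorphism for $m\equiv 0\bmod 3$ (carried along in the induction and correctly matched, via the orientation-reversal/source-vertex variant, with the map $g$ appearing in the sequence) is exactly the missing ingredient needed to close that case.
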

\proof
We use the notation illustrated in Figure~\ref{fig:SECLeafRem}, with $\tG = \tA_n$.   By Theorem~\ref{finedimondo}, the path poset~$P(\tA_n)$ is isomorphic to $\nabla_{P(\tG'')} (P(\tG \setminus \{{e_2}\}), P(\tG')) $. Observe that $P(\tG \setminus \{{e_2}\})$ is a cone over $P(\tG'')$ and that we have isomorphisms of graphs $\tG' \cong \tA_{n-1}$ and $\tG'' \cong \tA_{n-2} $. Consequently, there is an induced isomorphism
\[
P(\tA_n)=P(\tG)\cong \nabla_{P(\tA_{n-2})} (\Cone P(\tA_{n-2}), P(\tA_{n-1}))
\]
of posets. 
Since the cohomology groups $\Hmu^*(\Cone P(\tA_{n-2});\bK)$ are all trivial, from Theorem~\ref{thm:MVposets} (applied to the gluing of $\Cone P(\tt{A}_{n-2})$ and  $P(\tA_{n-1})$) we obtain the following exact sequence:
 \[
 \cdots \to \Hmu^i( P(\tA_{n-1});\bK) \rightarrow \Hmu^i( P(\tA_{n-2});\bK) \rightarrow \Hmu^{i+1}(P(\tA_{n}),\bK) \rightarrow \Hmu^{i+1}( P(\tA_{n-1});\bK)   \to 
 \cdots
 \]
A direct computation shows that the cohomology of the graphs $\tA_n$ for $n<5$ agrees with the isomorphisms in Equation~\eqref{eq:recursion_H_alternating}. The assertion now follows  by an induction argument.
\endproof

As a consequence, it is possible to obtain  a precise  description of the ranks of cohomology groups for alternating graphs:

\begin{cor}\label{linalt} 
Let $\tA_n$ be an alternating graph. Then:
 \begin{equation*}
 \dim_{\bK} \mathrm{H}_{\mu}^k(\tA_n; \bK) =
\begin{cases}
    1 & \mbox{if }  n=3(k-1)+2 \mbox{ or } n=3k,\\
    	0 & \mbox{otherwise.} \\
	\end{cases}
\end{equation*}
\end{cor}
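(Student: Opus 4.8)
The plan is to deduce the corollary from the recursion of Theorem~\ref{LinAlt} by induction on $n$. Set $f(n,k) := 1$ when $n = 3(k-1)+2$ or $n = 3k$, and $f(n,k):=0$ otherwise; the goal is to show $\dim_{\bK}\Hmu^k(\tA_n;\bK) = f(n,k)$. As a preliminary observation I would record how $k \mapsto f(n,k)$ depends on the residue of $n$ modulo $3$: it is identically zero when $n \equiv 1 \bmod 3$ (neither $3k$ nor $3k-1$ is $\equiv 1$), it equals $\mathbf 1_{k=m}$ when $n = 3m$, and it equals $\mathbf 1_{k=m}$ when $n = 3m-1$. These are exactly the three patterns that the recursion in Theorem~\ref{LinAlt} must reproduce.

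First I would settle the base cases $n \in \{0,1,2,3,4\}$. For $n=0$, $\tA_0$ is a single vertex, so $\Hmu^0 \cong \bK$ and $\Hmu^{k}=0$ for $k \neq 0$; for $n=1$, $\tA_1 = \tI_1$ has trivial cohomology by Example~\ref{ex:In}; for $n=2$, $\tA_2$ is a sink (or source) on three vertices, so $\dim_{\bK}\Hmu^1(\tA_2;\bK)=1$ and all other groups vanish by Remark~\ref{rem:0dandelion}. The values for $n=3,4$ then follow from Theorem~\ref{LinAlt} (respectively $\Hmu^*(\tA_3;\bK)\cong\Hmu^*(\tA_2;\bK)$ and $\Hmu^*(\tA_4;\bK)=0$). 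In each case one checks the output against $f(n,-)$.

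For the inductive step, fix $n \geq 5$ and assume the claim for all smaller indices. If $n \equiv 1 \bmod 3$, then $\Hmu^*(\tA_n;\bK)=0$ by Theorem~\ref{LinAlt}, matching $f(n,-)\equiv 0$. If $n = 3m$, then $\Hmu^k(\tA_n;\bK)\cong\Hmu^k(\tA_{n-1};\bK)$ and $n-1 = 3m-1$, so the inductive hypothesis gives dimension $\mathbf 1_{k=m} = f(3m,k)$. If $n = 3m-1$, then $\Hmu^k(\tA_n;\bK)\cong\Hmu^{k-1}(\tA_{n-2};\bK)$ and $n-2 = 3(m-1)$, so the inductive hypothesis gives dimension $\mathbf 1_{k-1=m-1} = \mathbf 1_{k=m} = f(3m-1,k)$. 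This closes the induction and proves the corollary.

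There is no real obstacle here: the content is entirely in Theorem~\ref{LinAlt}, and what remains is bookkeeping. The one point to watch is the index alignment in the class $n \equiv 2 \bmod 3$ — one must see that the degree shift by $1$ in the recursion matches the jump $n \mapsto n-2$ between successive alternating graphs with nonzero cohomology — and to make sure the base cases are numerous enough that the recursion never refers to $\tA_n$ with $n<0$.
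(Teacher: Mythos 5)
Your proof is correct and follows the same route as the paper: the corollary is deduced from the recursion of Theorem~\ref{LinAlt} by induction on $n$, with the cases $n<5$ checked directly and the three residue classes mod $3$ handled exactly as you do (including the degree shift by $1$ matching the jump $n\mapsto n-2$). The base-case values and index bookkeeping all check out.
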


 Before proceeding with our analysis of the cohomology of oriented linear graphs, we need the following definition. 

\begin{defn}\label{def:reduced}
Given a oriented linear graph $\tL$,  
its \emph{reduction}~$\mathrm{Red }~\tL$  is the (possibly disconnected) spanning subgraph of $\tL$, obtained {as follows; for each maximal simple path on (the ordered set of) vertices $\{v_h,\dots,v_{h+m}\}$, delete all edges, but the one between $v_{h+m-1}$ and $v_{h+m}$.}
\end{defn}

Note that if a maximal simple path is an edge of $\tL$, then it is still an edge of $\mathrm{Red }~\tL$.

\begin{example}
The reduction of $\tI_n$ is the spanning subgraph of $\tI_n$ with only edge~$(v_{n-1},v_n)$. The reduction $\mathrm{Red }~\tL$ is isomorphic to $\tL$ if, and only if, $\tL$ is alternating.
\end{example}

Observe that, by construction, the digraph $\mathrm{Red }~\tL$ is the disjoint union of $h$ connected components, 
where~$h-1$ is the number of edges deleted during the process of reduction.

We can linearly order the connected components of $\mathrm{Red}~\tL$ according to their minimal-index vertex. Denote by $C_i$ the $i$-th component with respect to this order. 
Notice that $\mathrm{Red}~\tL$ satisfies~$D(1)$, thence for each $i\in \{ 1 ,..., h\}$ there is a $k_{i}\geq 0$ such that $C_i\cong\tA_{k_i}$.

\begin{lem}\label{extr4compo}
Let $\tL$ be an oriented linear graph satisfying the property $D(2)$, 
and let $\tA_{k_1}, \dots, \tA_{k_h}$ be the connected components of $\mathrm{Red}~\tL$. If $k_h \equiv 1 \mod 3$, then $\Hmu^*(\tL; \bK)=0$. 
\end{lem}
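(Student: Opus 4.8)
\emph{The plan.} I would isolate the last block $C_h\cong\tA_{k_h}$ of $\mathrm{Red}~\tL$ and use that it is acyclic because $k_h\equiv 1\bmod 3$ (Corollary~\ref{linalt}); the handle to pull on is the single edge $f$ of $\tL$ that the reduction deletes ``just to the left of'' $C_h$. First I would dispatch the easy cases. Since the multipath cohomology of a disjoint union is the tensor product over its connected components (Remark~\ref{DisjointUnion}), and $C_h$ lies in one component, which still satisfies $D(2)$ and still has last reduced block $\tA_{k_h}$, I may assume $\tL$ connected; then every maximal simple path of $\tL$ has length $1$ or $2$. If $\tL$ is alternating then $\tL=\tA_{n-1}$ with $n-1=k_h\equiv 1\bmod 3$, so $\Hmu^*(\tL;\bK)=0$ by Theorem~\ref{LinAlt}. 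If $\tL$ is not alternating but its first or last maximal path has length $2$, then $\tL$ has a coherent tail and $\Hmu^*(\tL;\bK)=0$ by Remark~\ref{rem:tail}. The remaining case is: $\tL$ not alternating, first and last maximal paths of length $1$, so the last length-$2$ maximal path $P$ is internal. Say $P$ spans $v_{q-1},v_q,v_{q+1}$ (so $q\ge 2$); after possibly reversing all orientations (which changes neither $P(\tL)$ nor the $k_i$'s) the edge of $P$ deleted by $\mathrm{Red}$ (Definition~\ref{def:reduced}) is $f=(v_{q-1},v_q)$, the other edge of $P$ is $(v_q,v_{q+1})$, and the only edge of $\tL$ at $v_{q-1}$ besides $f$ is $(v_{q-1},v_{q-2})$. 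By maximality of $P$ among length-$2$ paths, every maximal path of $\tL$ to the right of $P$ has length $1$; hence, writing $\tL_{\le t}$ (resp.\ $\tL_{\ge t}$) for the subgraph induced on $\{v_0,\dots,v_t\}$ (resp.\ $\{v_t,\dots,v_{n-1}\}$), we get $\tL_{\ge q}\cong\tA_{k_h}$, the graph $\tL_{\ge q-1}$ begins with the coherent length-$2$ path $v_{q-1}\to v_q\to v_{q+1}$, and $\tL\setminus\{f\}=\tL_{\le q-1}\sqcup\tA_{k_h}$, while $\tL\setminus\{(v_{q-1},v_{q-2})\}=\tL_{\le q-2}\sqcup\tL_{\ge q-1}$.

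\emph{The main reduction.} Because $k_h\equiv 1\bmod 3$ we have $\Hmu^*(\tA_{k_h};\bK)=0$ (Corollary~\ref{linalt}), hence $\Hmu^*(\tL\setminus\{f\};\bK)=0$ by Remark~\ref{DisjointUnion} — this is the one place the hypothesis enters. Applying Proposition~\ref{prop:ses subgraphs} to the regular inclusion $\tL\setminus\{f\}\hookrightarrow\tL$ and feeding in this vanishing, the long exact sequence identifies $\Hmu^*(\tL;\bK)$, up to a grading shift, with $\H^*(Q;\bK)$, where $Q\subseteq P(\tL)$ is the subposet of multipaths of $\tL$ that contain $f$; in particular $\Hmu^*(\tL;\bK)=0$ iff $\H^*(Q;\bK)=0$. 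Now a multipath containing $f$ cannot contain $(v_{q-1},v_{q-2})$ (two edges with source $v_{q-1}$), so splitting a multipath into its part on $E(\tL_{\le q-2})$ and its part on $E(\tL_{\ge q-1})$ identifies $Q$, as a poset, with the product poset $P(\tL_{\le q-2})\times Q^{R}$, where $Q^{R}\subseteq P(\tL_{\ge q-1})$ is the subposet of multipaths of $\tL_{\ge q-1}$ containing $f$. As in Remark~\ref{DisjointUnion} this passes to a tensor factorisation of cochain complexes, giving $\H^*(Q;\bK)\cong\Hmu^*(\tL_{\le q-2};\bK)\otimes\H^*(Q^{R};\bK)$; in particular $\Hmu^*(\tL;\bK)=0$ as soon as $\H^*(Q^{R};\bK)=0$.

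\emph{Closing the loop.} I would run the same device on $\tL_{\ge q-1}$. Here $\tL_{\ge q-1}\setminus\{f\}=\{v_{q-1}\}\sqcup\tA_{k_h}$ is again acyclic (Remark~\ref{DisjointUnion}, Corollary~\ref{linalt}), so Proposition~\ref{prop:ses subgraphs} applied to $\tL_{\ge q-1}\setminus\{f\}\hookrightarrow\tL_{\ge q-1}$ identifies $\H^*(Q^{R};\bK)$, up to a shift, with $\Hmu^*(\tL_{\ge q-1};\bK)$. Finally $\Hmu^*(\tL_{\ge q-1};\bK)=0$ because $\tL_{\ge q-1}$ has a coherent tail: its leaf $v_{q-1}$ together with the length-$2$ path $v_{q-1}\to v_q\to v_{q+1}$ gives a $\nu$-equivalence $\tH_{0,1}\to\tL_{\ge q-1}$ away from a single vertex, and $\tL_{\ge q-1}$ is linear, hence has no coherently oriented cycle, so Criterion~\ref{crit:istmo} (equivalently Remark~\ref{rem:tail}) applies. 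Chaining these identifications backwards yields $\Hmu^*(\tL;\bK)=0$.

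\emph{Main obstacle.} The only step requiring a genuine argument rather than bookkeeping is the identification $Q\cong P(\tL_{\le q-2})\times Q^{R}$ together with the resulting splitting $C^*(Q;\bK)\cong\Cmu^*(\tL_{\le q-2};\bK)\otimes C^*(Q^{R};\bK)$: one must check that restriction to the two halves is a bijection on multipaths containing $f$, that it respects covering relations and levels, and that compatible sign assignments can be chosen so the tensor splitting holds at chain level — this is the exact analogue, for the ``contains $f$'' subposets, of the disjoint-union splitting of Remark~\ref{DisjointUnion}. The rest is a careful unwinding of $\mathrm{Red}~\tL$ around its last block (identifying $q$, the deleted edge $f$, and the shapes of $\tL_{\ge q}$, $\tL_{\ge q-1}$, $\tL_{\le q-2}$), which I expect to be routine but needs to be done cleanly, including the mild case analysis according to whether the length-$2$ path $P$ is internal or terminal.
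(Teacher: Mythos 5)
Your argument is correct, and it isolates exactly the same local configuration as the paper's proof: after dispatching the alternating and coherent--tail cases in the same way, both proofs zoom in on the edge $f$ that the reduction deletes immediately to the left of the last block and on the other edge $g=(v_{q-1},v_{q-2})$ at the branch vertex, and both rely on the same three acyclicity inputs ($\tL\setminus\{f\}$ and $\tL\setminus\{f,g\}$ are acyclic because they contain $\tA_{k_h}$ as a connected component, and $\tL\setminus\{g\}$ is acyclic because $\tL_{\ge q-1}$ has a coherent tail). The difference lies only in the homological packaging. The paper applies Theorem~\ref{finedimondo} at the $2$-valent branch vertex to obtain $P(\tL)\cong\nabla_{P(\tL\setminus\{f,g\})}\bigl(P(\tL\setminus\{f\}),P(\tL\setminus\{g\})\bigr)$ and concludes in one stroke from Mayer--Vietoris (Theorem~\ref{thm:MVposets}), since all three pieces are acyclic. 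You instead run Proposition~\ref{prop:ses subgraphs} twice, with a chain-level product splitting $Q\cong P(\tL_{\le q-2})\times Q^{R}$ of the ``contains $f$'' subposet in between; that splitting is true and is proved exactly as the disjoint-union splitting of Remark~\ref{DisjointUnion}, but it is also avoidable, which removes the one step you flag as delicate: since a multipath containing $f$ cannot contain $g$, your $Q$ is literally the complement $P(\tL\setminus\{g\})\setminus P(\tL\setminus\{f,g\})$, so a second application of Proposition~\ref{prop:ses subgraphs} to the inclusion $\tL\setminus\{f,g\}\hookrightarrow\tL\setminus\{g\}$ gives $\H^{*}(Q;\bK)\cong\Hmu^{*+1}(\tL\setminus\{g\};\bK)=0$ directly, with no K\"unneth argument and no sign bookkeeping. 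Either way the proof closes; the gluing-plus-Mayer--Vietoris route of the paper is essentially your two short exact sequences assembled into a single long exact sequence.
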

\proof 
Note that if $\tL$ satisfies $D(1)$, then it is an alternating graph, and the statement follows by Proposition~\ref{LinAlt}. Suppose $\tL$ satisfies $D(2)$, but not $D(1)$ and denote by $w_0, \dots, w_{k_h}$ the vertices of $\tA_{k_h}$.
First, observe that, by Definition~\ref{def:reduced}, if $k_h=1$ or if~$|E(\tL)|=k_h+1$, then the linear graph~$\tL$ has a coherent tail. Thus, $\Hmu^*(\tL; \bK)=0$ by Remark~\ref{rem:tail}.

In all the other cases, up to orientation reversing, the graph~$\tL$ contains a subgraph as in Figure~\ref{pippo}. 
\begin{figure}[h]
	\begin{tikzpicture}[baseline=(current bounding box.center)]
		\tikzstyle{point}=[circle,thick,draw=black,fill=black,inner sep=0pt,minimum width=2pt,minimum height=2pt]
		\tikzstyle{arc}=[shorten >= 8pt,shorten <= 8pt,->, thick]
		
		\node   at (-.5,0) {$\dots$};
		\node[above] (v0) at (0,0) {$v$};
		\draw[fill] (0,0)  circle (.05);
		\node[above] (v1) at (1.5,0) {$x$};
		\draw[fill] (1.5,0)  circle (.05);
		\node[above] (v2) at (3,0) {$w_{0}$};
		\draw[fill] (3,0)  circle (.05);
		\node[above] (v4) at (4.5,0) {$w_{1}$};
		\draw[fill] (4.5,0)  circle (.05);
		\node[above] (v5) at (6,0) {$w_2$};
		\draw[fill] (6,0)  circle (.05);
		\node[above] (v6) at (7.5,0) {$w_3$};
		\draw[fill] (7.5,0)  circle (.05);
		 \node[above]  at (9,0) {$w_{k_h -1}$};
		 \node (v7) at (9,0) {};
		\draw[fill] (9,0)  circle (.05);
		
		\node   at (8.25,0) {$\dots$};
		 \node (v8) at (10.5,0) {};
		 \node[above]  at (10.5,0) {$w_{k_h}$};
		\draw[fill] (10.5,0)  circle (.05);
		
		\draw[thick, bunired, -latex] (0.15,0) -- (1.35,0);
		\draw[thick, bunired, -latex] (2.75,0) -- (1.65,0);
		\draw[thick, bunired, -latex] (4.35,0) -- (3.15,0);
		\draw[thick, bunired, -latex] (4.65,0) -- (5.85,0);
		\draw[thick, bunired, -latex] (7.35,0) -- (6.15,0);
	    \draw[thick, bunired, ] (v7) -- (v8);
	\end{tikzpicture}
	\caption{The sub-graph $\tA_{k_h}$ inside $\tL$. The edge $(w_{k_h -1}, w_{k_h})$ can be oriented either way depending on the parity of $k_h$. }
	\label{pippo}
\end{figure}
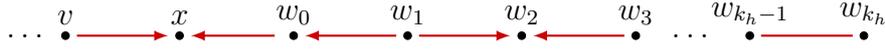
We can apply Theorem~\ref{finedimondo} choosing the vertex~$x$ illustrated in Figure~\ref{pippo}, 
and obtain the isomorphism
\[P(\tL)\cong \nabla_{P(\tL''')}(P(\tL'), P(\tL'')) \ , \]
{where $\tL'=\tL \setminus \{(w_0,x)\}$, $\tL''=\tL \setminus \{(v,x)\}$ and $\tL'''=\tL \setminus \{(w_0,x),(v,x) \}$.}
Now, Remark~\ref{rem:tail} implies $\Hmu^*(\tL''; \bK)=0$.
Furthermore, $\Hmu^*(\tL'; \bK)=\Hmu^*(\tL'''; \bK)=~0$, since $\tL'$ and $\tL'''$ have~$\tA_{k_h}$ as a connected component -- cf.~Remark~\ref{DisjointUnion} and~Proposition~\ref{LinAlt}; in fact, $w_{k_h}$ is univalent, since~$k_h\neq 0$. The statement now follows from Theorem~\ref{thm:MVposets}.
\endproof

Denote by $\lfloor x \rfloor $ the integer part of $x$.

\begin{thm}\label{thm:linearcohomology}
Let $\tL$ be an oriented linear graph satisfying the property $D(2)$, but not $D(1)$. 
Denote by $\tA_{k_1}, \dots, \tA_{k_h}$ the connected components of $\mathrm{Red }~\tL$. Then:
\begin{enumerate}[label={\rm (\arabic*)}]
\item if there exists an index $j \in \{1, \dots h-1\}$ such that $k_j \equiv 0 \mod  3$, then $\Hmu^*(\tL,\bK)=0$;
\item otherwise, the cohomology groups of $\tL$ decompose as
\[\quad \Hmu^{* + h - 1}(\tL;\bK)= \Hmu^*(\mathtt{A}_{3\lfloor k_1/3 \rfloor}; \bK)\otimes \dots \otimes \Hmu^*(\mathtt{A}_{3\lfloor k_{h-1}/3 \rfloor}; \bK)\otimes \Hmu^*(\mathtt{A}_{k_h} ; \bK) \ , \]
where $\otimes$ here denotes the graded tensor product over $\bK$. 
\end{enumerate}
\end{thm}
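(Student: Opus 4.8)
The plan is to argue by induction, peeling off one connected component of $\mathrm{Red}~\tL$ at a time, following the strategy of the proof of Proposition~\ref{LinAlt}. By Remark~\ref{DisjointUnion} we may assume $\tL$ connected, so that $h\ge 2$ since $\tL$ does not satisfy $D(1)$. Write $C_i\cong\tA_{k_i}$, and let $\tL_0$ be the oriented linear graph obtained from $\tL$ by deleting the vertices of the first component $C_1$. As $\tL$ satisfies $D(2)$, every maximal simple path of $\tL$ has length at most $2$, and a short combinatorial check then gives $\mathrm{Red}~\tL_0=C_2\sqcup\cdots\sqcup C_h$; in particular $\tL_0$ still satisfies $D(2)$ by Remark~\ref{subgraphstablepoints}, has $h-1$ reduced components, and its \emph{last} component is again $C_h\cong\tA_{k_h}$, so the inductive hypothesis applies to $\tL_0$ (for $h=2$ this just records $\tL_0=\tA_{k_2}$). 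It therefore suffices to prove the following \textbf{peeling lemma}: if $k_1\equiv 0\pmod 3$ then $\Hmu^*(\tL;\bK)=0$, and otherwise $\Hmu^{*+1}(\tL;\bK)\cong\Hmu^*(\tA_{3\lfloor k_1/3\rfloor};\bK)\otimes\Hmu^*(\tL_0;\bK)$. Indeed, inserting the inductive formula for $\Hmu^*(\tL_0;\bK)$ and tracking the degree shifts reproduces statements~(1) and~(2); note that case~(1) for $\tL$ holds exactly when $k_1\equiv 0$, or else $k_1\not\equiv 0$ but some $k_j\equiv 0$ with $2\le j\le h-1$, in which case the second alternative of the peeling lemma reads $\Hmu^{*+1}(\tL;\bK)\cong\Hmu^*(\tA_{3\lfloor k_1/3\rfloor};\bK)\otimes 0=0$.

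To prove the peeling lemma I would induct on $k_1$. If $k_1=0$ then $C_1$ is a single vertex, so $\tL$ has a coherent tail at that end and $\Hmu^*(\tL;\bK)=0$ by Remark~\ref{rem:tail}. For $k_1\ge 1$, up to reversing orientations let $v$ be the vertex of $C_1$ adjacent to $v_0$; since $C_1$ is alternating $v$ is a sink or a source, and we may assume it is a sink, with edges $e_1=(v_0,v)$ (the edge of $C_1$ at $v_0$) and $e_2$ the other edge with target $v$. Applying Theorem~\ref{finedimondo} at $v$ (with $T=\{e_1,e_2\}$) gives
\[
P(\tL)\cong\nabla_{P(\tL\setminus\{e_1,e_2\})}\bigl(P(\tL\setminus\{e_2\}),\,P(\tL\setminus\{e_1\})\bigr).
\]
Here $\tL\setminus\{e_2\}$ has the edge $e_1$ as a connected component $\cong\tI_1$, hence $P(\tL\setminus\{e_2\})$ is a cone over $P(\tL\setminus\{e_1,e_2\})$ and $\Hmu^*(\tL\setminus\{e_2\};\bK)=0$ (Theorem~\ref{HomCono}, or Example~\ref{ex:In} with Remark~\ref{DisjointUnion}); and, for $k_1=1$, $\tL\setminus\{e_1\}$ acquires a coherent tail, so it too has trivial cohomology and Theorem~\ref{thm:MVposets} gives $\Hmu^i(\tL;\bK)\cong\Hmu^{i-1}(\tL\setminus\{e_1,e_2\};\bK)=\Hmu^{i-1}(\tL_0;\bK)$ at once. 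For $k_1\ge 2$, both $e_1,e_2$ lie in $C_1$, and another $D(2)$-dependent combinatorial verification identifies $\tL\setminus\{e_1\}$ and $\tL\setminus\{e_1,e_2\}$, up to isolated vertices, with oriented linear graphs whose reductions are those of $\tL$ with $C_1$ shrunk to $\tA_{k_1-1}$, respectively $\tA_{k_1-2}$, and $C_2,\dots,C_h$ unchanged; in particular the peeling lemma applies to them by induction on $k_1$.

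Feeding the computed cohomologies into the Mayer--Vietoris sequence of Theorem~\ref{thm:MVposets}, and using Corollary~\ref{linalt} (so that $\Hmu^*(\tA_{3\ell};\bK)=\bK$ concentrated in degree $\ell$), a direct case analysis according to $k_1\bmod 3$ finishes the recursion. When $k_1\equiv 1$ or $k_1\equiv 2\pmod 3$, the inductive hypothesis forces one of $\Hmu^*(\tL\setminus\{e_1\};\bK)$, $\Hmu^*(\tL\setminus\{e_1,e_2\};\bK)$ to vanish, so the long exact sequence collapses to an isomorphism $\Hmu^i(\tL;\bK)\cong\Hmu^{i-m-1}(\tL_0;\bK)$ with $m=\lfloor k_1/3\rfloor$, which is the second alternative of the peeling lemma. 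When $k_1\equiv 0\pmod 3$ both outer terms become the same shift of $\Hmu^*(\tL_0;\bK)$, and $\Hmu^*(\tL;\bK)=0$ is equivalent to the restriction map $\Hmu^*(\tL\setminus\{e_1\};\bK)\to\Hmu^*(\tL\setminus\{e_1,e_2\};\bK)$ being an isomorphism. This is the one step the exact sequence does not give for free, and I would settle it by naturality: the isomorphisms provided by the peeling lemma for $\tL\setminus\{e_1\}$ and $\tL\setminus\{e_1,e_2\}$ are compatible with the inclusion between them, because that inclusion is the identity on the common ``tail'' $\tL_0$ while the isomorphisms themselves arise from Mayer--Vietoris sequences of inclusions of this tail. (The case $k_h\equiv 1$ of the theorem is already covered by Lemma~\ref{extr4compo}, consistently with the above, since then $\Hmu^*(\tA_{k_h};\bK)=0$.)

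The main obstacle is therefore, as in Proposition~\ref{LinAlt}, not the existence of the Mayer--Vietoris long exact sequences but the identification of their connecting homomorphisms: to read off $\Hmu^*(\tL;\bK)$ when $k_1\equiv 0\pmod 3$ one must know that a specific restriction map is an isomorphism, which forces one to track the peeling isomorphisms functorially in the tail subgraph. A secondary, purely bookkeeping nuisance is the repeated verification that, under the standing hypothesis $D(2)$, deleting the front component (or the edges $e_1,e_2$) alters $\mathrm{Red}~\tL$ only in the controlled manner used above.
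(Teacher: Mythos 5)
Your overall strategy -- induct on the number of reduced components, cut at a vertex of $\tA_{k_1}$ via Theorem~\ref{finedimondo}, and run the Mayer--Vietoris sequence of Theorem~\ref{thm:MVposets} with one piece a cone -- is the same as the paper's. But your choice of where to cut creates a genuine gap in the case $k_1\equiv 0\bmod 3$, and you have correctly located it yourself: with $e_1,e_2$ the two edges at the front of $C_1$, the subgraphs $\tL\setminus\{e_1\}$ and $\tL\setminus\{e_1,e_2\}$ have first reduced components $\tA_{k_1-1}$ and $\tA_{k_1-2}$ with $k_1-1\equiv 2$ and $k_1-2\equiv 1$, and (by your own peeling formula) both have cohomology isomorphic to the \emph{same} nonzero shift of $\Hmu^*(\tL_0;\bK)$. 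The exact sequence then reduces the theorem to showing that the restriction map between them is an isomorphism. Your appeal to ``naturality'' of the peeling isomorphisms is not a proof: those isomorphisms are connecting maps of two \emph{different} Mayer--Vietoris sequences, and checking that they commute with the restriction requires comparing the two pushout decompositions explicitly -- this is precisely the hard step, not a formality. Worse, the gap propagates: your treatment of $k_1\equiv 1$ uses the vanishing alternative of the peeling lemma for $\tL\setminus\{e_1\}$ (whose first component has index $\equiv 0$), so the entire induction rests on the unproved case. A concrete instance where the problem bites is $v_0\to v_1\leftarrow v_2\to v_3\leftarrow v_4\leftarrow v_5\to v_6$ (here $k_1=3$, $k_2=2$): both outer terms of your sequence are $\bK$ in degree $2$, and vanishing of $\Hmu^*(\tL;\bK)$ is exactly the assertion that the map $\bK\to\bK$ is nonzero.

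The paper sidesteps this entirely by treating the two cases with different decompositions. For case~(1) it cuts at a vertex just \emph{past} the offending component $\tA_{k_j}$ (the vertex $v$ of Figure~\ref{fig:caseB}), so that all three subgraphs $\tL'$, $\tL''$, $\tL'''$ demonstrably have trivial cohomology -- one by a coherent tail (Remark~\ref{rem:tail}), the other two because they acquire a connected component whose terminal reduced piece is $\tA_{k_j+1}$ with $k_j+1\equiv 1\bmod 3$, so Lemma~\ref{extr4compo} and Remark~\ref{DisjointUnion} apply -- and then Corollary~\ref{nullgluing} gives vanishing with no connecting-map analysis. For case~(2) it deletes the joining edge $e_1\in E(\tL)\setminus E(\mathrm{Red}~\tL)$ together with its neighbour $e\in E(\tA_{k_1})$, producing the pieces $\tA_{k_1}\sqcup\tL_2$ and $\tA_{k_1-1}\sqcup\tL_2$; since $k_1\not\equiv 0\bmod 3$ there, exactly one of $k_1$, $k_1-1$ is $\equiv 1\bmod 3$, so by Corollary~\ref{linalt} one outer term of the Mayer--Vietoris sequence always vanishes and the sequence collapses to an isomorphism. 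If you want to salvage your route, you must either prove the compatibility of the peeling isomorphisms with restriction honestly, or replace your treatment of the $k_1\equiv 0$ case by a direct vanishing argument of the paper's type.
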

\proof
We linearly order the edges $e_1, \dots,e_{h-1}$ in $E(\tL)\setminus E(\mathrm{Red}~\tL)$, i.e.~ the edges in the complement of the components $\tA_{k_1}, \dots, \tA_{k_h}$, according to their minimal-index vertex. The addition of $e_i$ to ${\rm Red}~L$ merges the component $\tA_{k_i}$ with the component $\tA_{k_{i+1}}$.
\begin{enumerate}[label={\rm (\arabic*)}]
\item 
To prove the first item, observe that if $k_j \equiv 0  \mod 3$, then either $\tL$ has a coherent tail (when $k_1=0$)  or, up to orientation reversing, it contains a subgraph as in Figure~\ref{fig:CaseA}, where the vertices $w_0, \dots, w_3$ are in $\tA_{k_j}$.
\begin{figure}[h]
	\begin{tikzpicture}[baseline=(current bounding box.center)]
		\tikzstyle{point}=[circle,thick,draw=black,fill=black,inner sep=0pt,minimum width=2pt,minimum height=2pt]
		\tikzstyle{arc}=[shorten >= 8pt,shorten <= 8pt,->, thick]
		
		\node[above] (v0) at (0,0) {$w_0$};
		\draw[fill] (0,0)  circle (.05);
		\node[above] (v1) at (1.5,0) {$w_1$};
		\draw[fill] (1.5,0)  circle (.05);
		\node[above] (v2) at (3,0) {$w_2$};
		\draw[fill] (3,0)  circle (.05);
		\node[above] (v4) at (4.5,0) {$w_{3}$};
		\draw[fill] (4.5,0)  circle (.05);
		\node[above] (v5) at (6,0) {$x$};
		\draw[fill] (6,0)  circle (.05);
		\node[above] (v6) at (7.5,0) {$v$};
		\draw[fill] (7.5,0)  circle (.05);
		
		\draw[thick, bunired, -latex] (0.15,0) -- (1.35,0);
		\draw[thick, bunired, -latex] (2.75,0) -- (1.65,0);
		\draw[thick, bunired, -latex] (3.15,0) -- (4.35,0);
		\draw[thick, bunired, -latex] (5.85,0) -- (4.65,0);
			\draw[thick, bunired, -latex] (7.35,0) -- (6.15,0);
	\end{tikzpicture}
	\caption{A possible sub-graph of $\tL$.}
	\label{fig:CaseA}
\end{figure}
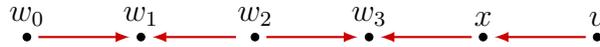
 Now, if $v$ is an univalent vertex of the graph~$\tL$, then $\tL$ has a coherent tail of length two and, again, the cohomology groups~$\Hmu^*(\tL,\bK)$ are trivial. In all remaining cases, $\tL$ contains a subgraph as in Figure~\ref{fig:caseB}.  
  \begin{figure}[h!]
	\begin{tikzpicture}[baseline=(current bounding box.center)]
		\tikzstyle{point}=[circle,thick,draw=black,fill=black,inner sep=0pt,minimum width=2pt,minimum height=2pt]
		\tikzstyle{arc}=[shorten >= 8pt,shorten <= 8pt,->, thick]
		
		\node[above] (v0) at (0,0) {$w_0$};
		\draw[fill] (0,0)  circle (.05);
		\node[above] (v1) at (1.5,0) {$w_1$};
		\draw[fill] (1.5,0)  circle (.05);
		\node[above] (v2) at (3,0) {$w_2$};
		\draw[fill] (3,0)  circle (.05);
		\node[above] (v4) at (4.5,0) {$w_{3}$};
		\draw[fill] (4.5,0)  circle (.05);
		\node[above] (v5) at (6,0) {$x$};
		\draw[fill] (6,0)  circle (.05);
		\node[above] (v6) at (7.5,0) {$v$};
		\draw[fill] (7.5,0)  circle (.05);
        \node[above] (v7) at (9,0) {$y$};
		\draw[fill] (9,0)  circle (.05);
		
		\draw[thick, bunired, -latex] (0.15,0) -- (1.35,0);
		\draw[thick, bunired, -latex] (2.75,0) -- (1.65,0);
		\draw[thick, bunired, -latex] (3.15,0) -- (4.35,0);
		\draw[thick, bunired, -latex] (5.85,0) -- (4.65,0);
        \draw[thick, bunired, -latex] (7.35,0) -- (6.15,0);
        \draw[thick, bunired, -latex] (7.65,0) -- (8.85,0);
			
	\end{tikzpicture}
	\caption{Another possible sub-graph of $\tL$.}
	\label{fig:caseB}
\end{figure}
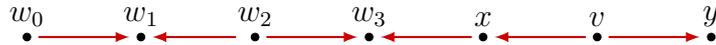
We are in the hypothesis of Theorem~\ref{finedimondo}, choosing the vertex~$v$, to decompose~$P(\tL)$ as 
\[P(\tL)\cong \nabla_{P(\tL''')}(P(\tL'), P(\tL'')) \ , \]
where $\tL'=\tL \setminus \{(v,x)\}$, $\tL''=\tL \setminus \{(v,y)\}$, and $\tL'''=\tL \setminus \{(v,x), (v,y)\}$.

The first assertion follows from Criterion~\ref{crit:FDM}: $\Hmu^*(\tL''; \bK)=0$ because $\tL''$ contains a coherent tail, and the cohomologies $\Hmu^*(\tL';\bK)$ and $\Hmu^*(\tL'''; \bK)$ are both trivial in virtue of Lemma~\ref{extr4compo} and Remark~\ref{DisjointUnion}.

\item The proof of second statement proceeds by induction. We are in the case $k_j\not\equiv 0 \mod 3$ for $j\in \{ 1 ,\dots, h-1\}$.

The graph obtained from~$\tL$ by deleting $e_1$ has two connected components $\tL_1=\tA_{k_1}$ and $\tL_2$. Observe that $\tL_1$ and $\tL_2$ are oriented linear graphs satisfying the property $D(2)$ -- cf.~Remark~\ref{subgraphstablepoints}. 
Since $k_1\neq 0$, we have that $\tL_1$ has at least one edge.
Furthermore, by definition of the reduction, we can assume that $e_1$ is contained in a linear subgraph of the form \raisebox{-.2em}{\begin{tikzpicture}
\node (a) at (0,0) {}  ;
\node[above] at (a) {};
\draw[fill] (a)  circle (.05);

\node (b) at (1,0) {} ;
\node[above] at (b) {};

\draw[fill] (b)  circle (.05);

\node (c) at (2,0) {};
\node[above] at (c) {};
\draw[fill] (c)  circle (.05);
\node[above] at (.5,0) {$e$};
\node[above] at (1.5,0) {$e_1$};

\draw[thick, -latex, bunired] (a) -- (b);

\draw[thick, -latex, bunired] (c) -- (b);

\end{tikzpicture}} or \raisebox{-.2em}{\begin{tikzpicture}
\node (a) at (0,0) {}  ;
\node[above] at (a) {};
\draw[fill] (a)  circle (.05);

\node (b) at (1,0) {} ;
\node[above] at (b) {};

\draw[fill] (b)  circle (.05);

\node (c) at (2,0) {};
\node[above] at (c) {};
\draw[fill] (c)  circle (.05);
\node[above] at (.5,0) {$e$};
\node[above] at (1.5,0) {$e_1$};

\draw[thick, -latex, bunired] (b) -- (a);

\draw[thick, -latex, bunired] (b) -- (c);

\end{tikzpicture}} with $e\in E(\tL_1)$. 
By Theorem~\ref{finedimondo}, we have the decomposition
\[P(\tL)\cong\nabla_{P((\tL_1 \sqcup \tL_2) \setminus \{e\} )}(P(\tL_1 \sqcup \tL_2), P(\tL \setminus \{e\})) \ . \]
Observe now that $ P(\tL \setminus \{e\}) \cong \Cone P\left((\tL_1 \sqcup \tL_2) \setminus \{e\}\right)$, hence 
\[P(\tL)\cong \nabla_{P(\tA_{k_1-1} \sqcup \tL_2)}\left(
P(\tA_{k_1} \sqcup \tL_2), \Cone  P(\tA_{k_1-1} \sqcup \tL_2) \right) \ . \] 
Using  Theorem~\ref{thm:MVposets}, we get the following exact sequence 
\begin{equation}\label{MValt}\Scale[0.90]{\qquad
    \cdots \to \Hmu^i({\tA}_{k_1-1} \sqcup \tL_2; \bK) \rightarrow \Hmu^{i+1}( \tL; \bK) \rightarrow \Hmu^{i+1}({\tA}_{k_1} \sqcup \tL_2; \bK) \rightarrow \Hmu^{i+1}( {\tA}_{k_1-1} \sqcup \tL_2; \bK)  \to \cdots }
\end{equation}  

In the following, set first $k_1=3j+1$. 
By Corollary~\ref{linalt} and Remark~\ref{DisjointUnion}, the cohomology group $\Hmu^i(\tA_{k_1} \sqcup \tL_2; \bK)$ is trivial. 
Analogously, the  group $\Hmu^i(\tA_{k_1-1} \sqcup \tL_2; \bK)$ is isomorphic to the product $\Hmu^{j}(\tA_{k_1-1}; \bK)\otimes \Hmu^{i-j}(\tL_2; \bK)$. 
{In the case $k_1=3j+2$ instead,} the group $\Hmu^i(\tA_{k_1} \sqcup \tL_2; \bK)$ is isomorphic to the product $\Hmu^{j+1}(\tA_{k_1}; \bK)\otimes \Hmu^{i-j-1}(\tL_2; \bK)$, and $\Hmu^i(\tA_{k_1-1} \sqcup \tL_2; \bK)$ is trivial.

As a consequence, using the Mayer Vietoris sequence in Equation~\eqref{MValt}, we have 
 \begin{equation*}
\Hmu^{i+1}\left(\tL; \bK\right) \cong
	\begin{cases}
     \Hmu^{j}({\tA}_{k_1-1};\bK)\otimes \Hmu^{i-j}(\tL_2; \bK) & \mbox{ if }  k_1=3j+1,\\
      \Hmu^{j+1}({\tA}_{k_1};\bK)\otimes \Hmu^{i-j}(\tL_2; \bK) & \mbox{ if } k_1=3j+2\\
	\end{cases}
\end{equation*}
If $k_1=3j+1$, we can rewrite the first isomorphism, using that the multipath cohomology of ${\tA}_{k_1-1} = \tA_{3j}$ is concentrated in cohomological degree $j$, 
as follows:
\[
\quad \Hmu^{i+1}\left(\tL; \bK\right) \cong \left(\Hmu^{j}({\tA}_{k_1-1}, \bK)\otimes \Hmu^{i-j}(\tL_2, \bK)\right) \cong \bigoplus_{r+s = i} \left(\Hmu^{r}({\tA}_{3j}, \bK)\otimes \Hmu^{s}(\tL_2, \bK)\right)
\]
On the other hand, if $k_1=3j+2$, by Theorem~\ref{LinAlt} and Corollary~\ref{linalt}, we obtain the isomorphism
\begin{equation*}
\Hmu^{i}\left(\tL; \bK\right) \cong  \Hmu^{j+1}({\tA}_{k_1}, \bK)\otimes \Hmu^{i-j -1}(\tL_2, \bK) \cong \Hmu^{j}({\tA}_{3j}, \bK)\otimes \Hmu^{i-j-1}(\tL_2, \bK).
\end{equation*}
Finally, we have also that
\[ \Hmu^{j}({\tA}_{3j}, \bK)\otimes \Hmu^{i-j-1}(\tL_2, \bK) \cong \bigoplus_{r+s = i -1} \left(\Hmu^{r}({\tA}_{3j}, \bK)\otimes \Hmu^{s}(\tL_2, \bK)\right)
\]
since the multipath cohomology of ${\tA}_{k_1-2} = \tA_{3j}$ is concentrated in degree $j$.
We have shown that, in either case, we have
\[\quad \Hmu^{*+1}\left(\tL; \bK\right) \cong \bigoplus_{r+s = *} \left(\Hmu^{r}({\tA}_{3j}, \bK)\otimes \Hmu^{s}(\tL_2, \bK)\right) = \left(\Hmu^{*}({\tA}_{3j}, \bK)\otimes \Hmu^{*}(\tL_2, \bK)\right).\]
The statement now follows by induction.
\end{enumerate}
\endproof

\subsection{Graded characteristic of linear graphs}

We now analyse the cohomology of linear graphs from a different perspective;
instead of considering multipath cohomology with coefficient in a field, we fix a principal ideal domain~$R$ as a base ring,  and take coefficients in a unital $R$-algebra~$A$.
We are interested in analysing the (graded) Euler characteristic in the case where~$A$ is graded.
Firstly, let us prove a general result.

\begin{thm}\label{thm:secremoveleaves}
Let $\tG$, $\tG^{\prime}$, and $\tG^{\prime \prime}$ be three digraphs as illustrated in Figure~\ref{fig:SECLeafRem}, and  let $A$ be a unital $R$-algebra.
Then, one of the following holds:
\begin{enumerate}[label ={\rm (\arabic*)}]
\item if the edges $e_1$ and $e_2$ are coherently oriented, as in Figure \ref{subfig:coherente1e2}, then the sequence
\[ 0 \to C_{\mu}^{*-1}(\tG^\prime;A) \longrightarrow C_{\mu}^*(\tG ; A ) \longrightarrow C_{\mu}^*(\tG^\prime;A) \otimes A \to 0 \]
is exact;
\item if the edges $e_1$ and $e_2$ are not coherently oriented, see
Figure~\ref{subfig:noncohenrete1e2}, then the sequence
\[ 0 \to C_{\mu}^{*-1}(\tG^{\prime\prime};A)\otimes A \longrightarrow C_{\mu}^*(\tG ; A ) \longrightarrow C_{\mu}^*(\tG^\prime;A)\otimes A \to 0 \]
is exact;
\end{enumerate}
where all tensor products are over $R$.
\end{thm}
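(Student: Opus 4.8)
The plan is to identify, for each case, an explicit edge in~$\tG$ whose presence or absence splits the multipath poset $P(\tG)$ into two pieces, and to verify that the resulting decomposition of the cochain complex is exactly the short exact sequence claimed. The key combinatorial input is Theorem~\ref{finedimondo}: setting $v = w$ (the vertex shared by $e_1$ and $e_2$), we have that $w$ is the target (in the coherent case, after possibly reversing orientations) of $k=2$ edges, so $P(\tG) \cong \nabla_{P(\tG_w^2)}\!\left(P(\tG_w^{(1)}), P(\tG_w^{(2)})\right)$, where $\tG_w^2 = \tG \setminus \{e_1,e_2\}$, and $\tG_w^{(i)} = (\tG\setminus\{e_1,e_2\})\cup e_i$. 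The point is to recognise these auxiliary graphs in terms of $\tG'$ and $\tG''$: since $\tG'' = \tG\setminus\{e_1,e_2\}$ (with the relevant vertices removed/isolated) and $\tG' = \tG\setminus\{e_1\} = \tG_w^{(2)}$, we can feed this into Proposition~\ref{prop:ses subgraphs} or directly analyse the chain-level splitting.

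First I would treat case~(1), the coherent orientation, where $e_1$ and $e_2$ form a length-two coherent path through~$w$ (say $e_1 = (v_{\text{left}}, w)$, $e_2 = (w, \cdot)$, up to reversal). Here the crucial observation is that a multipath $\tH \in P(\tG)$ either does not contain $e_1$ — in which case $\tH \in P(\tG')$ — or contains $e_1$; and if it contains $e_1$ it may or may not also contain $e_2$ (both are allowed, since together they form a coherent simple path). I would partition $P(\tG)$ accordingly and match the pieces: the multipaths not containing $e_1$ give the quotient complex $C_\mu^*(\tG';A)$, but tensored with $A$ because the component structure near~$w$ differs (in $\tG'$ the vertex $w$ sits at the end of a path, contributing an extra tensor factor $A$ relative to its rôle in the subcomplex); while the multipaths containing $e_1$ give a subcomplex isomorphic to $C_\mu^{*-1}(\tG';A)$ with the degree shift coming from the extra edge $e_1$. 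I would check that the differential respects this filtration — adding $e_2$ to a multipath containing $e_1$ stays inside the subcomplex, adding $e_1$ to one without it is the connecting map — so that we genuinely get a short exact sequence of complexes, with the maps being the inclusion of the $e_1$-containing part and the restriction (quotient) to the $e_1$-free part.

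Then I would treat case~(2), the non-coherent orientation (both $e_1$ and $e_2$ have $w$ as target, or both as source). Now a multipath containing $e_1$ cannot also contain $e_2$, since two edges with common target cannot lie in a single simple path or in disjoint simple paths sharing that vertex. So the multipaths containing $e_1$ are in bijection with multipaths of $\tG\setminus\{e_2\}$ containing $e_1$, hence (deleting $e_1$) with $P(\tG'')$ together with the labelling data of the endpoints of $e_1$ — which accounts for the extra tensor factor $A$ in $C_\mu^{*-1}(\tG'';A)\otimes A$. The quotient is again $C_\mu^*(\tG';A)\otimes A$ as in case~(1). Again I would verify that the differential is upper-triangular with respect to the splitting "contains $e_1$" versus "does not contain $e_1$".

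The main obstacle I anticipate is bookkeeping the tensor factors of $A$ correctly: the functor $\mathcal{F}_A$ of Equation~\eqref{eq:fun_obj} assigns a copy of $A$ to each connected component, so deleting an edge can either merge-or-split components or leave the count unchanged depending on whether the edge is a bridge, and one must track how the level function $\ell$ shifts. I would handle this by carefully describing, for a generic multipath $\tH$ in each stratum, the components of $\tH$, of $\tH\setminus e_1$, and of the corresponding multipath in $\tG'$ or $\tG''$, and reading off both the shift in $\ell$ (giving the $[-1]$) and the discrepancy in the number of $A$-factors (giving the extra $\otimes A$). The sign/order issues in the ordered tensor product are routine given that the relevant vertex $w$ and edges $e_1, e_2$ can be placed last in the chosen vertex ordering, so I would fix such an ordering at the outset to trivialise them.
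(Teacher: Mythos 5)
Your proof is correct and follows essentially the same route as the paper: both arguments partition $P(\tG)$ according to whether a multipath contains $e_1$, identify the $e_1$-free (downward closed) part with $P(\tG^\prime)$ plus an isolated vertex — whence the quotient $C_{\mu}^*(\tG^\prime;A)\otimes A$ — and identify the $e_1$-containing part with $C_{\mu}^{*-1}(\tG^\prime;A)$ or $C_{\mu}^{*-1}(\tG^{\prime\prime};A)\otimes A$ according to coherence, the paper merely outsourcing the first step to a general sub/quotient exact sequence for downward closed subposets. Two small corrections to your framing: Theorem~\ref{finedimondo} is neither applicable (in the coherent case the middle vertex is the target of only one of $e_1,e_2$, and in any case the $\nabla$-gluing produces a Mayer--Vietoris-type decomposition rather than a sub/quotient one) nor needed, and the extra tensor factor $A$ in the quotient term comes from the univalent endpoint of $e_1$ persisting as an isolated connected component of each $e_1$-free multipath, not from the position of the middle vertex $w$.
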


\begin{proof}
Before dwelling into the details of each case, we first discuss the general picture.
Denote for simplicity by $P$, $P'$, and $P''$ the path posets of $\tG$, $\tG^{\prime}$, and $\tG^{\prime \prime}$, respectively.
We have an embedding of $\tG^\prime$ into $\tG$ that induces (as a spanning subgraph, cf.~Figure~\ref{subfig:GprimeInG}) an injective morphism of poset $\imath \colon P' \to P$. We remark that $\imath(P')$ is a downward closed faithful subposet of~$P$, and that the minimal length of an element in $P\setminus \imath(P')$ is $1$. By \cite[Proposition~5.12]{primo},  we have the following sequence of cochain complexes
\[ 0 \to C^{*-1}_{\mathcal{F}_{A,A}}(P\setminus \imath(P')) \longrightarrow  C^*_{\mathcal{F}_{A,A}}(P)  \longrightarrow C^*_{\mathcal{F}_{A,A}}(P')  \otimes A \to 0 \]
which is exact.
By definition, we have that $C^*_{\mathcal{F}_{A,A}}(P) = C_{\mu}^*(\tG ; A )$ and  $C^*_{\mathcal{F}_{A,A}}(P') = C_{\mu}^*(\tG' ; A )$.

To conclude it is enough to identify the complex $C^*_{\mathcal{F}_{A,A}}(P\setminus {\imath(P')})$. 
Observe that the elements of $P\setminus \imath(P')$ are precisely the multipaths in $\tG$  containing $e_1$. The proof splits now in two cases;
\begin{enumerate}[label ={\rm (\arabic*)}]
    \item\label{case:seci} if $e_1$ and $e_2$ are coherently oriented, then $\tH \cup \{ e_1 \}\in P\setminus \imath(P')$ for each $\tH\in \imath(P')$. Thus, we have an order-preserving bijection between $P(\tG^\prime)$ and $P\setminus \imath(P')$, which also preserves the number of connected components. Therefore, we obtain the isomorphism
    \[C^*_{\mathcal{F}_{A,A}}(P\setminus \imath(P')) \cong C^*_{\mathcal{F}_{A,A}}(P') = C_{\mu}^*(\tG^\prime;A)\]
   of cochain complexes;
    \item if $e_1$ and $e_2$ are  not coherently oriented, then a multipath which contains $e_1$ cannot contain $e_2$. Thus, we have an identification between multpaths in $P\setminus \imath(P)$ and multipath in $\tG^{\prime\prime} \cup \{ e_1\}$, which gives  the isomorphisms
    \[C^*_{\mathcal{F}_{A,A}}(P\setminus \imath(P')) \cong C^*_{\mathcal{F}_{A,A}}(P'') \otimes A = C_{\mu}^*(\tG^{\prime\prime};A)\otimes A \ . \]
    The tensor factor $\otimes A$ arises from an extra connected component in each element of the poset $P\setminus \imath(P')$ with respect to the corresponding element in $P''$ (namely the edge~$e_1$).
\end{enumerate}
The statement is now immediate from the above identifications.
\end{proof}

The short exact sequence in Theorem \ref{thm:secremoveleaves} \ref{case:seci} holds also in a slightly different case;

\begin{rem}\label{rem:seci}
Assume that the edges $e_1$, $e_2$, and $e_3$  in $\tG$ form a path, and that $e_2$ is not contained in any coherently oriented cycle in~$\tG$, then the sequence in Theorem \ref{thm:secremoveleaves} \ref{case:seci} holds.
In this case, the role of $\tG'$ is played by the graph obtained from $\tG$ by contracting the ``middle edge'' $e_2$.
Then, we have that
\[P(\tG)\setminus P(\tG \setminus \{ e_2\}) \cong  P(\tG') \quad\text{and} \quad P(\tG)\setminus P(\tG \setminus \{ e_2\}) \cong  P(\tG \setminus \{ e_2\}) ,\]
where the first identification is given by contracting $e_2$, while the second is given by deleting it.
Note that, in the former case, the number of connected components of multipaths is preserved, while in the second case a multipath $\tH\in P(\tG)\setminus P(\tG \setminus \{ e_2\})$ is sent to a multipath with one more connected component.
At this point, the same reasoning as in the proof of Theorem~\ref{thm:secremoveleaves} provides the desired exact sequence.
\end{rem}

We can use Theorem~\ref{thm:secremoveleaves} to reprove a result of Przytycki~\cite{Prz} which computes the cohomology of $\tI_n$ (cf.~\cite[Corollary~7.5]{primo}). We observe that Przytycki obtains this result using the chromatic polynomial as intermediate step, while we prove it directly by induction. Moreover, the following corollary, if $A$ is commutative, can also be  proved as an application of the deletion-contraction exact sequence for the chromatic homology \cite[Theorem~3.2]{HGRong}.

\begin{cor}\label{cor:homology linear graphs}
Let~$\tI_n$ be the coherently oriented linear graph (cf.~Figure~\ref{fig:nstep}). For each (unital) $R$-algebra~$A$, we have 
\[ \Hmu^*(\tI_n;A) = \Hmu^0(\tI_n;A),\]
and \[{\rm rank}_R(\Hmu^*(\tI_n;A)) = {\rm rank}_R(\Hmu^0(\tI_n;A)) =  \begin{cases}{\rm rank}_R (A) ( {\rm rank}_R (A) -1 )^n & n\geq 1 \\ {\rm rank}_R (A)  & n =0 \end{cases}\ . \]
\end{cor}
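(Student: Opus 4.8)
The plan is to prove the statement by induction on $n$, using the short exact sequence of Theorem~\ref{thm:secremoveleaves}~(1), exactly as suggested by the text preceding the corollary. Label the vertices of $\tI_n$ by $v_0,\dots,v_n$ and, for $n\ge 2$, apply Theorem~\ref{thm:secremoveleaves}~(1) to $\tI_n$ with the two terminal and coherently oriented edges $e_1=(v_0,v_1)$, $e_2=(v_1,v_2)$ (so $\tG=\tI_n$ and, in the notation of Figure~\ref{fig:SECLeafRem}, $\tG^{\prime}\cong\tI_{n-1}$ is the path on $v_1,\dots,v_n$, embedded in $\tI_n$ as the spanning subgraph with $e_1$ deleted). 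This yields the short exact sequence of cochain complexes
\[ 0\longrightarrow C_{\mu}^{*-1}(\tI_{n-1};A)\longrightarrow C_{\mu}^*(\tI_n;A)\longrightarrow C_{\mu}^*(\tI_{n-1};A)\otimes_R A\longrightarrow 0. \]
The base cases are $n=0$, where $C_{\mu}^*(\tI_0;A)=A$ is concentrated in degree $0$, and $n=1$, where $C_{\mu}^*(\tI_1;A)$ is the two-term complex $A\otimes_R A\xrightarrow{\ \mu\ }A$; since $A$ is unital, $\mu$ is surjective, so $\Hmu^1(\tI_1;A)=0$ and $\Hmu^0(\tI_1;A)=\ker\mu$ has rank ${\rm rank}_R(A)({\rm rank}_R(A)-1)$.

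For the inductive step, take the long exact sequence in cohomology associated to the short exact sequence above. Using the identifications $H^i(C_{\mu}^{*-1}(\tI_{n-1};A))=\Hmu^{i-1}(\tI_{n-1};A)$ and $H^i(C_{\mu}^*(\tI_{n-1};A)\otimes_R A)=\Hmu^{i}(\tI_{n-1};A)\otimes_R A$ (using exactness of $-\otimes_R A$; for the numerical statement only additivity of ranks is needed), the inductive hypothesis that $\Hmu^*(\tI_{n-1};A)$ is concentrated in degree $0$ forces $\Hmu^i(\tI_n;A)=0$ for all $i\ge 2$ and collapses the long exact sequence to
\[ 0\longrightarrow \Hmu^0(\tI_n;A)\longrightarrow \Hmu^0(\tI_{n-1};A)\otimes_R A \xrightarrow{\ \delta\ } \Hmu^0(\tI_{n-1};A)\longrightarrow \Hmu^1(\tI_n;A)\longrightarrow 0. \]
Thus concentration in degree $0$ for $\tI_n$ is equivalent to surjectivity of the connecting homomorphism $\delta$.

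To compute $\delta$, unwind its definition at the chain level. In cohomological degree $0$ every multipath has length $0$ and hence avoids $e_1$, so the quotient map is an isomorphism there; a cocycle $\eta$ of $C_\mu^0(\tI_{n-1};A)\otimes_R A\cong C_\mu^0(\tI_n;A)$ therefore lifts to itself, one applies the multipath differential $d_{\mu}$ of $\tI_n$, and $\delta[\eta]$ is the component of $d_\mu\eta$ supported on the unique length-$1$ multipath of $\tI_n$ that contains $e_1$ (the components on length-$1$ multipaths avoiding $e_1$ vanish because $\eta$ is a cocycle in the quotient). Tracking the canonical identification of connected components, this component is, up to sign, the element obtained from $\eta$ by multiplying the tensor factor attached to $v_0$ (the $\otimes A$-factor) with the tensor factor attached to $v_1$. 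Hence $\delta$ is, up to sign, induced by the multiplication $A\otimes_R A\to A$; since $A$ is unital, $\eta\mapsto 1_A\otimes\eta$ is a section on cocycles, so $\delta$ is surjective, $\Hmu^1(\tI_n;A)=0$, and $\Hmu^*(\tI_n;A)=\Hmu^0(\tI_n;A)$. I expect this identification of $\delta$ with the multiplication of $A$ — in particular getting the component-bookkeeping right for which embedding $\tI_{n-1}\hookrightarrow\tI_n$ is used in Theorem~\ref{thm:secremoveleaves} — to be the only genuinely delicate point; everything else is formal.

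Finally, the rank formula drops out of the Euler characteristic. Write $\chi_R(\tI_m):=\sum_i(-1)^i\,{\rm rank}_R C_\mu^i(\tI_m;A)=\sum_i(-1)^i\,{\rm rank}_R\Hmu^i(\tI_m;A)$. Additivity of ${\rm rank}_R$ on the short exact sequence of complexes, together with ${\rm rank}_R(M\otimes_R A)={\rm rank}_R(M)\cdot{\rm rank}_R(A)$, gives $\chi_R(\tI_n)=({\rm rank}_R(A)-1)\,\chi_R(\tI_{n-1})$, and $\chi_R(\tI_0)={\rm rank}_R(A)$, so $\chi_R(\tI_n)={\rm rank}_R(A)({\rm rank}_R(A)-1)^n$. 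Combining this with the concentration result already proved yields ${\rm rank}_R\Hmu^0(\tI_n;A)=\chi_R(\tI_n)={\rm rank}_R(A)({\rm rank}_R(A)-1)^n$ for $n\ge 1$, while $\Hmu^0(\tI_0;A)=A$ settles the case $n=0$.
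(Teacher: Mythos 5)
Your proof is correct and follows essentially the same route as the paper: induction on $n$ via the short exact sequence of Theorem~\ref{thm:secremoveleaves}(1) with $\tG'=\tI_{n-1}$, surjectivity of the connecting map (which the paper simply asserts as $\delta^*(x\otimes 1)=x$ and you verify by unwinding it at the chain level), and rank additivity — your Euler-characteristic bookkeeping is just a repackaging of the paper's rank count on the resulting four-term exact sequence. The implicit identification $\mathrm{H}^i(C_\mu^*(\tI_{n-1};A)\otimes_R A)\cong \Hmu^i(\tI_{n-1};A)\otimes_R A$ that you flag is used at the same level of (in)formality in the paper's own argument, so this is not a gap relative to the source.
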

\begin{proof}
The statement is true for $\tI_0$ and it is easily proved for $\tI_1$; in fact, we have
\[ 0\to C^0_{\mu}(\tI_1;A) = A\otimes A \overset{d^0}{\longrightarrow} C^1_{\mu}(\tI_1;A) = A \to 0,\]
where $d^0$ is the map $a\otimes b\mapsto ab$, 
which is surjective since $A$ is unital.

We proceed by induction. Assume  that the statement is true for $n=k-1$. Then, we can apply Theorem~\ref{thm:secremoveleaves} to $\tG = \tI_k$, $\tG^\prime = \tI_{k-1}$, and $e = (v_0,v_1)$ -- cf.~Figure~\ref{fig:nstep}.
From the inductive hypothesis, we obtain the exact sequences
\[ 0 = {\rm H}_{\mu}^{i-1}(\tI_{k-1};A) \to{\rm H}_{\mu}^i(\tI_{k} ; A ) \to {\rm H}_{\mu}^{i}(\tI_{k-1};A) = 0,\quad \text{for }i>1,\]
and
\begin{equation}
\label{eq:lespartial}
0 \to {\rm H}_{\mu}^0(\tI_{k}; A ) \to{\rm H}_{\mu}^0(\tI_{k-1};A) \otimes A \overset{\delta^*}{\to} {\rm H}_{\mu}^0(\tI_{k-1};A)  \to {\rm H}_{\mu}^1(\tI_{k} ; A ) \to 0,
\end{equation}
where $\delta^*(x\otimes 1) = x$.
As a consequence we have
\begin{enumerate}
\item ${\rm H}_{\mu}^i(\tI_{k} ; A ) = 0$, for all $i>1$;
\item the exact sequence in Equation~\eqref{eq:lespartial}, since $\delta^*$ is surjective, splits into the exact sequences
\[ 0 \to {\rm H}_{\mu}^0(\tI_{k}; A ) \to{\rm H}_{\mu}^0(\tI_{k-1};A) \otimes A \overset{\delta^*}{\to} {\rm H}_{\mu}^0(\tI_{k-1};A)  \to 0 \]
and
\[ 0 \to {\rm H}_{\mu}^1(\tI_{k} ; A ) \to 0.  \]
\end{enumerate}
Since the rank is additive on short exact sequences\footnote{To see this one can tensor for the quotient field, or localise -- cf.~\cite[Definition~1.4.2 and Proposition~1.4.5]{HB}.}, this concludes the proof.
\end{proof}

Let $R$ be a PID, and $A^* = \bigoplus_{i\in \bZ} A^i$ be a finitely generated $\bZ$-graded $R$-algebra. 
The \emph{graded dimension} of $A$ is the Laurent polynomial 
\[\qdim(A^*) \coloneqq \sum_{i\in\bZ} \mathrm{rank}_{R} (A^i)q^i \in \bZ\left[ q,q^{-1} \right] \ , \]
where $\mathrm{rank}_{R}(M)$ indicates the maximal number of non-torsion, linearly independent, elements. 
A graded algebra has, by definition, an homogeneous multiplication. As a consequence, the multipath cochain complex inherits from $A^*$ a second $\bZ$-grading, which is  preserved by the differential. This gives the multipath cohomology the structure of bi-graded cohomology theory. 
Define the \emph{graded Euler characteristic} of a graph $\tG$ (with respect to $A^*$) as
\[\chi_{\rm gr} (\tG;A^*) = \sum_{i,j\in\bZ} (-1)^i \mathrm{rank}_{R} ({\rm H}_{\mu}^{i,j}(\tG;A^*))q^j \in \bZ\left[ q,q^{-1} \right] \ . \]
Note that if we evaluate $\qdim(A^*)$ (resp.~$\chi_{\rm gr}$) in $q =1$, we obtain the rank of $A^*$ as $R$-module (resp. the usual Euler characteristic $\chi$).
It is well-known that the Euler characteristic is additive under exact sequences, and the same holds for the graded Euler characteristic \footnote{For each fixed value $j$ of the second grading we have a short exact sequence of chain complexes. Notice that $\Cmu^{i,j}(\tG;A)\neq 0$ for finitely many values of $i$ and $j$. Thence, we can re-arrange the sum and write 
\[\chi_{\rm gr} (\tG;A^*) = \sum_{j\in\bZ} \chi (\Cmu^{*,j}(\tG;A))q^j.\]
Now, additivity follows from the additivity of the (usual) Euler characteristic.}.
With the above notation in place, the following corollary is an immediate consequence of Theorem~\ref{thm:secremoveleaves}.

\begin{cor}\label{cor:ChiLinear}
Let $\tG$, $\tG^{\prime}$, and $\tG^{\prime \prime}$ be three digraphs as illustrated in Figure~\ref{fig:SECLeafRem}. Let $A^*$ be a finitely generated free $\bZ$-graded $R$-algebra with graded dimension $\alpha  = \qdim(A^*)$.
\begin{enumerate}[label ={\rm (\arabic*)}]
\item\label{case:chigr1} If the edges $e_1$ and $e_2$ are coherently oriented (cf.~Figure~\ref{subfig:coherente1e2}), then
\[ \chi_{\rm gr} (\tG;A^*) = (\alpha -1)  \chi_{\rm gr} (\tG^\prime ;A^*). \]
\item If the edges$e_1$ and $e_2$ are { not} coherently oriented (cf.~Figure \ref{subfig:noncohenrete1e2}), then
\[ \chi_{\rm gr} (\tG;A^*) = \alpha \left(  \chi_{\rm gr} (\tG^\prime;A^* ) - \chi_{\rm gr} (\tG^{\prime\prime};A^*) \right).\]
\end{enumerate}
\end{cor}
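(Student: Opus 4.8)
The plan is to derive both identities directly from the short exact sequences of cochain complexes furnished by Theorem~\ref{thm:secremoveleaves}, using only the additivity of the graded Euler characteristic under short exact sequences (as recorded in the footnote preceding this corollary) together with its behaviour under tensor products. The key preliminary observation is that for a finitely generated free $\bZ$-graded $R$-algebra $A^*$ and any bi-graded cochain complex $C^{*,*}$ with finitely many nonzero bi-degrees, one has $\chi_{\rm gr}(C^{*,*}\otimes_R A^*) = \chi_{\rm gr}(C^{*,*})\cdot \qdim(A^*)$; this is immediate from the definition of $\chi_{\rm gr}$ once one notes that the grading on the tensor product is the sum of the gradings and that tensoring a free module by a free module multiplies ranks, so the generating functions multiply. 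I would state this as a one-line remark at the start of the proof.

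For part~\ref{case:chigr1}, I would take the exact sequence
\[ 0 \to C_{\mu}^{*-1}(\tG^\prime;A) \longrightarrow C_{\mu}^*(\tG ; A ) \longrightarrow C_{\mu}^*(\tG^\prime;A) \otimes A \to 0 \]
of Theorem~\ref{thm:secremoveleaves}~\ref{case:seci}, apply $\chi_{\rm gr}$, and use additivity to get
\[ \chi_{\rm gr}(\tG;A^*) = \chi_{\rm gr}(C_{\mu}^{*,*}(\tG^\prime;A^*)\otimes A^*) - \chi_{\rm gr}(C_{\mu}^{*-1,*}(\tG^\prime;A^*)). \]
The first term equals $\alpha\,\chi_{\rm gr}(\tG^\prime;A^*)$ by the tensor-product remark, and the shift by one in cohomological degree contributes a sign $-1$, so the second term equals $-\chi_{\rm gr}(\tG^\prime;A^*)$, yielding $\chi_{\rm gr}(\tG;A^*) = (\alpha - 1)\chi_{\rm gr}(\tG^\prime;A^*)$. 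Here one must be careful that the shift $[-1]$ or $*-1$ appearing in the first slot of the sequence only affects the homological grading, not the internal $q$-grading, so it introduces a factor $(-1)$ and not a factor of $q$; I would remark on this explicitly.

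For part~(2), I would argue identically starting from
\[ 0 \to C_{\mu}^{*-1}(\tG^{\prime\prime};A)\otimes A \longrightarrow C_{\mu}^*(\tG ; A ) \longrightarrow C_{\mu}^*(\tG^\prime;A)\otimes A \to 0, \]
obtaining $\chi_{\rm gr}(\tG;A^*) = \alpha\,\chi_{\rm gr}(\tG^\prime;A^*) - \alpha\,\chi_{\rm gr}(\tG^{\prime\prime};A^*) = \alpha\big(\chi_{\rm gr}(\tG^\prime;A^*) - \chi_{\rm gr}(\tG^{\prime\prime};A^*)\big)$, where again the homological shift supplies the minus sign on the $\tG^{\prime\prime}$ term. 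The only genuine point requiring care — and the closest thing to an obstacle — is bookkeeping the interaction of the homological shift with the bi-grading and confirming that the footnote's additivity statement applies degreewise in the internal grading $j$; once that is pinned down, both computations are one line each. Everything else (finiteness of the complexes, freeness ensuring ranks multiply) is already guaranteed by the hypotheses on $A^*$ and by the construction of the multipath complex recalled earlier in the paper.
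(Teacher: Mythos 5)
Your proposal is correct and follows exactly the route the paper intends: the paper offers no written proof, declaring the corollary ``an immediate consequence'' of Theorem~\ref{thm:secremoveleaves} via the additivity of $\chi_{\rm gr}$ on short exact sequences (the footnote) and the multiplicativity $\chi_{\rm gr}(C\otimes A^*)=\qdim(A^*)\,\chi_{\rm gr}(C)$ for free finitely generated $A^*$, which is precisely what you spell out. The only blemish is the intermediate display in part~(1): additivity gives $\chi_{\rm gr}(\tG;A^*)=\chi_{\rm gr}\bigl(C_\mu^{*-1}(\tG';A)\bigr)+\chi_{\rm gr}\bigl(C_\mu^{*}(\tG';A)\otimes A\bigr)$ with a plus sign, and the minus sign should appear only once, coming from the homological shift $\chi_{\rm gr}\bigl(C_\mu^{*-1}(\tG';A)\bigr)=-\chi_{\rm gr}(\tG';A^*)$; as written you record the minus in the display and then attribute it to the shift again, though your prose and final formulas are nonetheless correct.
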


Corollary~\ref{cor:ChiLinear} allows us to compute the (graded) characteristic of any oriented linear graph, recursively. We provide, as an example of this process, the graded characteristic of some alternating graphs.

\begin{example}
Let $\tA_n$ be the alternating graph on $n$ vertices, with $n\geq 3$; by Corollary~\ref{cor:ChiLinear} the graded characteristic can be expressed as:
\[ \chi_{\rm gr} (\tA_n;A^*) = \alpha \left(  \chi_{\rm gr} (\tA_{n-1};A^* ) - \chi_{\rm gr} (\tA_{n-2};A^*) \right).\]
For $n=0$ and $n=1$, the graded characteristic of $\tA_n$ are $\qdim(A^*) =: \alpha$ and $\alpha(\alpha -1)$, respectively. Some further examples are listed in Table~\ref{tab:AltChar}.
\begin{table}[h]
    \centering
    \begin{tabular}{cl}
       $n$ & $\chi_{\rm gr}(\tA_n;A^*)$ \\
       \hline\hline
       0 & $\alpha$\\
       1 & $\alpha(\alpha -1)$ \\
       2 & $\alpha^{2}(\alpha - 2)$\\
3 & $\alpha^{2}(\alpha^2 - 3\alpha + 1)$\\
4 & $\alpha^{3}(\alpha - 1)(\alpha - 3)$\\
5 & $\alpha^{3}(\alpha^3 - 5\alpha^2 + 6\alpha - 1)$\\
6 & $\alpha^{4}(\alpha - 2)(\alpha^2 - 4\alpha + 2)$\\
7 & $\alpha^{4}(\alpha - 1)(\alpha^3 - 6\alpha^2 + 9\alpha - 1)$\\
8 & $\alpha^{5}(\alpha^2 - 5\alpha + 5)(\alpha^2 - 3\alpha + 1)$\\
9 & $\alpha^{5}(\alpha^5 - 9\alpha^4 + 28\alpha^3 - 35\alpha^2 + 15\alpha - 1)$\\
10 & $\alpha^{6}(\alpha - 1)(\alpha - 2)(\alpha - 3)(\alpha^2 - 4\alpha + 1)$\\
11 & $\alpha^{6}(\alpha^6 - 11\alpha^5 + 45\alpha^4 - 84\alpha^3 + 70\alpha^2 - 21\alpha + 1)$
    \end{tabular}
    \caption{The graded Euler characteristic of some alternating linear graphs.}
    \label{tab:AltChar}
\end{table}

The usual Euler characteristic over $R = \bK$ can be obtained by evaluating the graded Euler characteristic in~$\alpha =1$. 
Thus, by Corollary~{\ref{linalt}}, $(\alpha -1)$ divides $\chi_{\rm gr} (\tA_n;A^*)$ if, and only if,~$n\equiv 1$ modulo $3$. Observe that our computations are in perfect accordance with this fact.
\end{example}

Despite not having a closed formula for the graded Euler characteristics of alternating graphs, we can compute its associated  generating function 
$A(t)=\sum \chi_{\rm gr}(\tA_n) t^n {\in (\bZ[\alpha])[[t]]}$ (cf. \cite{genfun}). In fact, if we denote by $S_h$ the classical shift for power series
\[ S_{h}\left(\sum_{n=0}^{\infty} c_n t^{n}\right) = \sum_{n=0}^{\infty} c_{n+h} t^{n}\ ,\]
from Theorem \ref{thm:secremoveleaves} one obtains the relation
$S_2(A(t))=\alpha(S_1(A(t))-A(t))$.
It is also immediate to see that 
\[S_1(A(t))=\frac{A(t)-\alpha}{t} \ . \]
Iterating $S_1$ and using the fact that $A(1)=A(0)(\alpha -1)$ one easily obtains that the ordinary generating function of the graded Euler characteristic of alternating graphs is 
\[A(t)=\frac{\alpha (1-t)}{1 - \alpha t  ( 1-t)} \ . \]
Furthermore, in some special cases, we can actually obtain a closed formula.

\begin{cor}
Let $A^*$ be a free $\bZ$-graded $R$-algebra with graded dimension $\alpha\in\bZ\left[ q,q^{-1} \right]$.
If~$\tI_n$ is the coherently oriented linear graph of length $n$, then 
\[ \chi_{\rm gr} (\tI_n;A^*) = \alpha (\alpha - 1)^n\text{, for }n>0\text{,}\]
and  $\chi_{\rm gr} (\tI_0;A^*) = \alpha$.
\end{cor}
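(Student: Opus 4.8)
The plan is to induct on $n$, using as the engine the coherently oriented case of Corollary~\ref{cor:ChiLinear} (which, being an Euler‑characteristic statement, does not even require knowing the homology). First I would settle the two smallest cases directly. The graph $\tI_0$ is a single vertex, so $\Cmu^{*,*}(\tI_0;A^*)$ is concentrated in cohomological degree $0$ where it is a copy of $A^*$; hence $\chi_{\rm gr}(\tI_0;A^*)=\qdim(A^*)=\alpha$. For $n=1$, as recalled in the proof of Corollary~\ref{cor:homology linear graphs}, the multipath complex is
\[ 0 \to A^*\otimes_R A^* \xrightarrow{\ d^0\ } A^* \to 0 , \]
so, since the graded dimension is multiplicative under tensor products over $R$, additivity of the graded Euler characteristic on this two–term complex gives $\chi_{\rm gr}(\tI_1;A^*)=\qdim(A^*\otimes_R A^*)-\qdim(A^*)=\alpha^2-\alpha=\alpha(\alpha-1)$.

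For the inductive step I would fix $n\geq 2$, assume the formula for $n-1$, label the vertices of $\tI_n$ as in Figure~\ref{fig:nstep}, and set $e_1=(v_0,v_1)$, $e_2=(v_1,v_2)$. These two edges are coherently oriented in $\tI_n$, and deleting $e_1$ together with the now pendant vertex $v_0$ leaves exactly the coherently oriented linear graph on $v_1,\dots,v_n$, i.e.\ a copy of $\tI_{n-1}$; the subgraph $\tG''$ on $v_2,\dots,v_n$ is a copy of $\tI_{n-2}$. Thus $\tI_n,\tI_{n-1},\tI_{n-2}$ instantiate $\tG,\tG',\tG''$ in the configuration of Figure~\ref{fig:SECLeafRem}\,(d). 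Applying the coherently oriented case of Corollary~\ref{cor:ChiLinear} and the inductive hypothesis yields
\[ \chi_{\rm gr}(\tI_n;A^*) = (\alpha-1)\,\chi_{\rm gr}(\tI_{n-1};A^*) = (\alpha-1)\cdot\alpha(\alpha-1)^{n-1} = \alpha(\alpha-1)^n , \]
which closes the induction and proves the formula for all $n>0$, the case $n=0$ having been treated above.

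One can also bypass the induction: by Example~\ref{ex:In} the poset $P(\tI_n)$ is the Boolean poset on the $n$ edges, a multipath with edge set $S$ is a spanning forest with $n+1-|S|$ connected components and level $|S|$, so $\mathcal{F}_{A^*}$ assigns to it a module of graded dimension $\alpha^{\,n+1-|S|}$; summing the signed contributions and applying the binomial theorem gives
\[ \chi_{\rm gr}(\tI_n;A^*) = \sum_{S\subseteq\{e_1,\dots,e_n\}}(-1)^{|S|}\alpha^{\,n+1-|S|} = \alpha\sum_{k=0}^{n}\binom{n}{k}(-1)^k\alpha^{\,n-k} = \alpha(\alpha-1)^n . \]
The computations are entirely routine; the only points deserving a moment's care are checking that the ambient graph obtained from $\tI_n$ by removing the leaf edge $e_1$ and its pendant vertex is genuinely $\tI_{n-1}$ (and not $\tI_{n-1}$ together with a stray isolated vertex, which would spuriously introduce an extra factor of $\alpha$), and noting that the case $n=1$ is not reachable from the recursion and so must be computed by hand as above.
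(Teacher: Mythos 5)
Your argument is correct and is exactly the derivation the paper intends: the recursion $\chi_{\rm gr}(\tI_n;A^*)=(\alpha-1)\,\chi_{\rm gr}(\tI_{n-1};A^*)$ from the coherently oriented case of Corollary~\ref{cor:ChiLinear}, anchored by the hand computations for $n=0,1$, and you rightly flag the two subtleties (the pendant vertex must be discarded with $e_1$, and $n=1$ is not reachable by the recursion). The closing direct computation over the Boolean poset $P(\tI_n)\cong\mathbb{B}(n)$ via the binomial theorem is a valid, self-contained alternative that bypasses Theorem~\ref{thm:secremoveleaves} entirely, though it is not needed.
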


From Remark~\ref{rem:seci} and Corollary~\ref{cor:ChiLinear} it follows that $(\alpha - 1)^{x}$ divides $\chi_{\rm gr}(\tL)$, where $x$ is the number of edges incident only to vertices which are either univalent or unstable.
This, similar divisibility properties, and the decomposition shown in Theorem~\ref{LinAlt}, seem to hint to the fact that~$\chi_{\rm gr}(\tL)$ is sensible  to ``dynamical properties'' of the graph.

\begin{q} 
Does it exist a closed formula for $\chi_{\rm gr}(\tL)$, with $\tL$ a linear graph, which features only dynamical data (e.g.~stable and unstable vertices, change of stability \emph{etc.}) and the polynomial of an alternating~$\chi_{\rm gr}(\tA_n)$ ($n$ also depending on dynamical data)?\end{q}

\section{Relations with simplicial homology}\label{sec:rel with simpl hom}

In this section we give a topological description of multipath cohomology. More specifically, we  see that~$\Hmu^*(\tG;R)$ is the ordinary  cohomology of a certain simplicial complex~$X(\tG)$ associated to $\tG$ -- cf.~Theorem~\ref{thm:multipath is simplicial}. For instance, this approach leads to a reinterpretation of the Mayer-Vietoris exact sequence for multipath cohomology, in topological terms. Furthermore, we also discuss which simplicial complexes can be realised as $X(\tG)$ for some $\tG$.

\subsection{Background material}
Recall that 
a \emph{regular CW-complex} is a CW-complex for which all the characteristic maps are homeomorphisms 
-- cf.~\cite[Section~3]{BJOR}. 
Recall also that the \emph{face poset}~$\mathcal{F}(X)$ of a CW-complex~$X$ is the poset on the set of cells of $X$, ordered by containment and augmented with a minimum element $\hat 0$ corresponding to the empty cell.
%
A poset $P$ with at least two elements is said to be a \emph{CW-poset} if it has a minimum~$\hat 0$,  and, for all $x\in P\setminus \hat 0$, the (geometric realisation\footnote{By geometric realisation of a poset we mean the geometric realisation of its order complex (i.e.~the abstract simplicial
complex whose faces are the totally ordered sub-posets of our poset), see~\cite[Chapter 9]{Kozlov}. } of the) interval $(\hat 0,x)\coloneqq \{z\in P\mid \hat 0<z<x\}$ is  a sphere.

\begin{example}\label{kjaf}
    A Boolean poset is a CW-poset. More generally, by \cite[Proposition~2.6 (b)]{BJOR}, every downward closed subposet of a Boolean poset is a CW-poset. By Remark~\ref{rem: PG squared and faithful}, the path poset $P(\tG)$  is a downward closed subposet of a Boolean poset, hence it is a CW-poset.
\end{example}


 A poset~$P$ is a CW-poset if, and only if, it is isomorphic to the face poset of a regular CW-complex -- see \cite[Proposition~3.1]{BJOR}.
As a consequence, for a digraph~$\tG$ there exists a regular CW-complex $X(\tG)$ whose face poset is isomorphic to $P(\tG)$. We can actually be more specific. Recall that 
an \emph{(augmented abstract) simplicial complex} $K$ on a vertex set $V$ is a simplicial complex augmented with a unique $(-1)$-simplex given by the empty set $\emptyset$. 

\begin{defn}\label{def:pathcomplx}
Given a digraph $\tG$, its \emph{multipath complex} is the augmented abstract simplicial complex~$X(\tG)$ on~$E(\tG)$ whose $k$-simplices are given by the multipaths in $\tG$ of length~$k-1$.
\end{defn}
Since each spanning sub-graph of a multipath is a multipath, it is clear that $X(\tG)$ is indeed an augmented abstract simplicial complex. Furthermore, we have the following observation.

\begin{rem}\label{rem:functor}
A  morphism of digraphs induces a morphism between the corresponding multipath complexes, which sends multipaths of length~$1$ to multipaths of length~$1$. 
It follows that for each  morphism of digraphs $\phi \colon\tG \to \tG'$ there is an associated  simplicial map $X(\phi)\colon X(\tG) \to X(\tG')$.
Clearly, we have that~$X(\mathrm{id}_{\tG})= \mathrm{id}_{X(\tG)}$, and that $X(\phi \circ \psi) =X(\phi)\circ X(\psi)$.
Hence, taking the multipath complex defines a functor~$X\colon \mathbf{ Digraph} \to \mathbf{ SimpComp}$ from the category of digraphs, and  morphisms of digraphs, to the category of augmented abstract simplicial complexes, and simplicial maps. 
\end{rem}

We can give a more explicit description of $X(\tG)$.

\begin{construction}
We construct the CW-complex which has~$X(\tG)$ as face poset by explicitly describing its cells and their gluing maps. 
We start by associating to the empty multipath of $\tG$, i.e.~the set of vertices of $\tG$, the empty simplex, i.e.~the $(-1)$-skeleton of $X(\tG)$. We build the complex $X(\tG)$ by attaching $n$-cells to the discrete set $X(\tG)^0\coloneqq E(\tG)$, i.e.~the $0$-skeleton of $X(\tG)$ is given by the set of edges of $\tG$. 

Suppose to have iteratively constructed the $(n-1)$-skeleton $X(\tG)^{(n-1)}$ of $X(\tG)$.
Each multipath~$\tH$ of length~$n+1$ -- i.e.~with $n+1$ edges -- is identified by edges $e_{i_0},\dots,e_{i_n}$ of $\tG$; hence, by $n+1$ points of  $X(\tG)^0$. We associate to $\tH$ the (abstract) $n$-dimensional simplex $\Delta_{\tH}^n \coloneqq [e_{i_0} , \cdots, e_{i_n}]$, which is an $n$-cell of $X(\tG)^n$. In this way, each multipath~$\tH$ of length~$n$ gives an $n$-cell of $X(\tG)$. Note that the multipath~$\tH$ can be obtained from exactly $n+1$ (sub-)multipaths of length $n$, say $\tH_0 ,\dots, \tH_n$, by adding an edge; more precisely, $\tH_{j}$ is the multipath of $\tG$ with edges $e_{i_1}, \dots ,\widehat{e_{i_j}}, ... ,e_{i_n}$, where $\widehat{e_{i_j}}$ indicates that the edge~$e_{i_j}$ is not counted. The multipaths $\tH_0 ,\dots, \tH_n$ correspond to $(n-1)$-cells of $X(\tG)$, and, moreover, the boundary $\partial(\Delta_\tH^n)$ of $\Delta_\tH^n$ can be identified with the union of its faces $\Delta_{\tH_j}^{n-1}$. The characteristic map of $\Delta_{\tH}^n$ is then defined by gluing each face $[e_{i_0}, \dots ,\widehat{e_{i_j}}, ... ,e_{i_n}]$ in $\partial(\Delta_\tH^n)$ with the corresponding $(n-1)$-cell in  $X(\tG)^{(n-1)}$.
More generally, we glue the simplices $\{ \Delta_{\tH}^n \}_{\lgt(\tH) =n+1}$ to $X(\tG)^{(n-1)}$ by identifying the facets of each $\Delta_{\tH}$ with the simplices $\Delta_{\tH_0} ,\dots, \Delta_{\tH_n}$, concluding the construction of the $n$-skeleton, hence of $X(\tG)$.
\end{construction}

Observe that maximal simplices in $X(\tG)$ correspond to maximal multipaths in $P(\tG)$. Therefore, to construct the geometric realisation of $X(\tG)$ we can proceed by finding the  maximal multipaths in $P(\tG)$, look at the intersection of each pair of maximal multipath, then glue the simplices associated to maximal multipaths along the faces determined by their intersections.

\begin{example}
In this example we explicitly describe the geometric realisation of the simplicial complex~$X(\tG)$, for \tG the H-shaped digraph illustrated in Figure~\ref{fig:CW-Hgraph}. Observe that we have five $0$-cells $E^{0}_{01}$, $E^{0}_{14}$, $E^{0}_{21}$, $E^{0}_{34}$, and  $E^{0}_{54}$, where the cell $E^{0}_{ij}$ corresponds to the edge $(v_i,v_j)$.
Each $1$-cell in $X(\tG)$ corresponds to a multipath of length two. Thus, there are precisely six $1$-cells $E^{1}_{01,14}$,  $E^{1}_{01,34}$,  $E^{1}_{01,54}$,  $E^{1}_{21,14}$, $E^{1}_{21,34}$, and $E^{1}_{21,54}$. The $1$-cell $E^{1}_{x,y}$ bounds the $0$-cells $E^0_x$ and $E^0_y$. 
    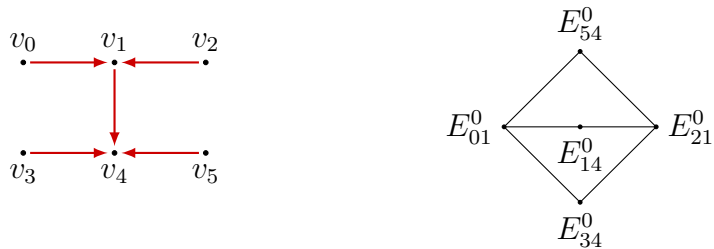
\begin{figure}[h]
    \begin{subfigure}[]{0.4\textwidth}
	\centering
		\begin{tikzpicture}[baseline=(current bounding box.center), scale =.6]
			\tikzstyle{point}=[circle,thick,draw=black,fill=black,inner sep=0pt,minimum width=2pt,minimum height=2pt]
			\tikzstyle{arc}=[shorten >= 8pt,shorten <= 8pt,->, thick]
			
			\node[above] (v0) at (-1,1) {$v_0$};
			\draw[fill] (-1,1)  circle (.05);
			\node[above] (v1) at (1,1) {$v_1$};
			\draw[fill] (1,1)  circle (.05);
			\node[above] (v2) at (3,1) {$v_{2}$};
			\draw[fill] (3,1)  circle (.05);
			\node[below] (v3) at (-1,-1) {$v_{3}$};
			\draw[fill] (-1,-1)  circle (.05);
			\node[below] (v4) at (1,-1) {$v_{4}$};
			\draw[fill] (1,-1)  circle (.05);
			\node[below] (v5) at (3,-1) {$v_{5}$};
			\draw[fill] (3,-1)  circle (.05);
			
			\draw[thick, bunired, -latex] (-0.85,1) -- (0.85,1);
			\draw[thick, bunired, -latex] (-0.85,-1) -- (0.85,-1);
			\draw[thick, bunired, -latex] (2.85,1) -- (1.15,1);
			\draw[thick, bunired, -latex] (2.85,-1) -- (1.15,-1);
			\draw[thick, bunired, -latex] (1,0.85) -- (1,-0.85);
			\node at (0,-2.5) {};
		\end{tikzpicture}
		\end{subfigure}
		\hspace{.05\textwidth}
		\begin{subfigure}[]{0.4\textwidth}
\begin{tikzpicture}[scale=0.5][thick]
		\draw[fill] (0,0)  circle (.05);
		\node[left] at (0,0)  {$E^{0}_{01}$};
		
		\draw[fill]  (2,0) circle (.05);
		\node[below] at (2,0)  {$E^{0}_{14}$};
		 
		\draw[fill] (4,0) circle (.05);
		\node[right] at (4,0)  {$E^{0}_{21}$};

		\draw[fill] (2,2) circle (.05);
		\node[above] at (2,2)  {$E^{0}_{54}$};

		\draw[fill] (2,-2) circle (.05);
		\node[below] at (2,-2)  {$E^{0}_{34}$};

		\draw (0,0) --  (2,0);
		\draw (4,0) --  (2,0);
		\draw (0,0) -- (2,2);
		\draw (0,0) -- (2,-2);
		\draw (4,0) -- (2,-2);
		\draw (4,0) -- (2,2);
		\end{tikzpicture}
		\end{subfigure}
			\caption{The $H$ digraph and its geometric realisation. }\label{fig:CW-Hgraph}
	\end{figure}
\end{example}

Let $K$ be an augmented abstract simplicial complex. The \emph{$n$-th reduced simplicial chain group} $\widetilde{C}_n(K;R)$  (with coefficients in $R$)  is the free $R$-module generated by all the $n$-simplices in $K$. 
We  assume, from now on, a linear ordering $ v_1 < v_2 < ... < v_k$ of the $0$-simplices of $K$ to be fixed.
Given an~$n$-simplex in $K$, say $\sigma = \{ v_{i_0}, ... , v_{i_n}\}$, we  denote $\sigma$ as $[v_{i_0},...,v_{i_n}]$, where $i_0 < i_2 < ... < i_n$, and define
\[ [v_{i_{s(0)}},...,v_{i_{s(n)}}] : = (-1)^{\sign (s)} [v_{i_0},...,v_{i_n}],\]
for each $s$ in the symmetric group over $\{ 0,...,n\}$.
The \emph{$n$-th simplicial differential} is defined as
\[ \delta_n\colon \widetilde{C}_n(K;R) \longrightarrow \widetilde{C}_{n-1}(K;R) : [v_{i_0},...,v_{i_n}] \longmapsto \sum_{j=0}^{n} (-1)^j  [v_{i_0},...,\widehat{v_{i_j}},...,v_{i_n}]\ ,   \]
where the hat $\widehat{x}$ indicates that $x$ is missing.
The \emph{$n$-th reduced simplicial co-chain group} is
\[ \widetilde{C}^n(K;R) = {\rm Hom}_R(\widetilde{C}_n(K;R); R) \]
and, given $f\in C^{n+1}(K;R)$, we define the \emph{co-boundary map} $\delta$ by setting 
$ \delta^n (f) = f(\delta_n(\sigma))$.

\subsection{Multipath cohomology is simplicial}

Let $X(\tG)$ be the multipath complex associated to the digraph~$\tG$, and let $\bK$ be a field.

\begin{thm}\label{thm:multipath is simplicial}
The multipath cohomology $\mathrm{H}_\mu^n(\tG;\bK)$ of $\tG$ is isomorphic to~$\widetilde{\rm H}^{n-1}(X(\tG);\bK)$, that is, the reduced (simplicial) cohomology of $X(\tG)$.
\end{thm}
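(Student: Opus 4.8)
The plan is to identify the multipath cochain complex $C_\mu^*(\tG;\bK)$ with the reduced simplicial cochain complex of $X(\tG)$, shifted by one in degree. Recall that, by Remark~\ref{rem: PG squared and faithful} and the construction preceding the theorem, the path poset $P(\tG)$ is (the augmentation of) the face poset of the simplicial complex $X(\tG)$: a multipath $\tH$ with $\ell(\tH) = n$ edges corresponds to an $(n-1)$-simplex $\Delta_\tH^{n-1} = [e_{i_0},\dots,e_{i_{n-1}}]$, and the empty multipath corresponds to the $(-1)$-simplex $\emptyset$. Fixing a total ordering of $E(\tG)$ (which induces the sign assignment of Example~\ref{exa:sign on Boolean} on $SSG(\tG)$, hence by restriction on $P(\tG)$), we get for each $n$ a canonical identification of free $\bK$-modules
\[ C_\mu^n(\tG;\bK) = \bigoplus_{\ell(\tH)=n} \mathcal{F}_\bK(\tH) \;\cong\; \bigoplus_{\ell(\tH)=n} \bK\cdot b_\tH \;\xrightarrow{\ \sim\ }\; \widetilde{C}^{\,n-1}(X(\tG);\bK), \qquad b_\tH \mapsto (\Delta_\tH^{n-1})^\vee, \]
where $(\Delta_\tH^{n-1})^\vee$ denotes the dual basis element. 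Here we use that with coefficients in $\bK$ the functor $\mathcal{F}_\bK$ assigns a copy of $\bK$ to every multipath, so the cochain groups are free on the multipaths of a given level, exactly as the reduced simplicial cochain groups of $X(\tG)$ are free on the simplices of a given dimension (with the degree shift $n \leftrightarrow n-1$ coming from the fact that the empty multipath of length $0$ is the $(-1)$-simplex).

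The heart of the argument is then to check that this degree-shifted identification intertwines the differentials, i.e.~that under the isomorphism above the multipath differential $d^n_\mu$ corresponds to the simplicial coboundary $\delta^{n-1}$. On the multipath side, $d^n_\mu(b_\tH) = \sum_{\tH \prec \tH'} (-1)^{\epsilon(\tH,\tH')} b_{\tH'}$ where, for $\tH' = \tH \cup e$, the sign $\epsilon(\tH,\tH')$ counts (mod $2$) the number of edges of $\tH$ preceding $e$ in the chosen total order on $E(\tG)$. On the simplicial side, the coboundary of $(\Delta_\tH^{n-1})^\vee$ evaluated on an $n$-simplex $\Delta_{\tH'}^{n} = [e_{i_0},\dots,e_{i_n}]$ (with $i_0<\dots<i_n$) is nonzero precisely when $\tH = \tH' \setminus \{e_{i_j}\}$ for some $j$, in which case the coefficient is $(-1)^j$ — and $j$ is exactly the number of edges of $\tH$ that precede the deleted edge $e_{i_j}$ in the total order, since the $e_{i_t}$ are listed in increasing order. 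Hence $\epsilon(\tH,\tH') \equiv j \pmod 2$, and the two differentials agree on the nose (not merely up to the usual sign-assignment ambiguity, thanks to the specific choice of $\epsilon$ from Example~\ref{exa:sign on Boolean}). This gives an isomorphism of cochain complexes $C_\mu^*(\tG;\bK) \cong \widetilde{C}^{\,*-1}(X(\tG);\bK)$, and passing to cohomology yields $\mathrm{H}_\mu^n(\tG;\bK) \cong \widetilde{\mathrm{H}}^{\,n-1}(X(\tG);\bK)$; by Remark~\ref{thm:uni_signs} the left-hand side is independent of the sign assignment used, so no generality is lost.

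The main obstacle, such as it is, is purely bookkeeping: making sure the degree shift is handled consistently (the empty multipath/$(-1)$-simplex is what forces the $-1$, and one must confirm that $\ell(\tH)$ for a multipath equals one plus the simplicial dimension of the corresponding simplex, which is immediate from $\ell(\tH) = \#E(\tH)$ on a path poset) and that the sign conventions line up — in particular verifying the claim ``$\epsilon(\tH,\tH') = $ position of the new edge'' against ``$(-1)^j$ in the simplicial coboundary'' with the same ordering conventions. One should also note that since $\bK$ is a field, reduced simplicial cohomology is the $\bK$-linear dual of reduced simplicial homology, so the statement could equivalently be phrased homologically; but working directly with cochain complexes avoids any universal-coefficient detour. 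Everything else — functoriality in $\tG$, independence of the ordering — is already recorded in Remark~\ref{rem:functor} and the cited results from \cite{primo}.
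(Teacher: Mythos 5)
Your proof is correct and follows essentially the same route as the paper: identify multipaths of length $n$ with $(n-1)$-simplices of $X(\tG)$, match the cochain groups, and check the differentials agree. The only (minor) difference is that you verify the signs match on the nose for the explicit Boolean-ordering sign assignment of Example~\ref{exa:sign on Boolean}, whereas the paper matches the differentials only up to sign and then invokes the uniqueness of sign assignments on path posets; both resolutions are valid.
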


Note that in the isomorphism between the multipath cohomology of $\tG$ and the simplicial cohomology of $X(\tG)$ there is a shift of degree one.

\begin{proof}
Let $X$ be a finite simplicial complex. 
The reduced simplicial cohomology cochain complex $\widetilde{C}^n (X;\bK)$ can be seen as the vector space over $\bK$ generated by the duals of all simplices (including the empty simplex) of $X$, with co-boundary map $\delta$ given by
\[ \delta (\sigma^*) = \sum_{\tau} (-1)^{\epsilon'(\tau,\sigma)} \tau^* \ ,\]
where $\tau$ ranges among all  simplices admitting $\sigma$ as a face, and $\epsilon'(\tau,\sigma)$ is $0$ or $1$ depending on whether or not the orientation of $\sigma$ matches with the orientation induced by $\tau$ -- for a more detailed construction, the reader can consult~\cite[Section 3.4.3]{Kozlov}.

By construction, the simplices of $X(\tG)$ correspond to multipaths in $\tG$; more precisely, the points of $X(\tG)^0$ are the edges of $\tG$, and  the multipath $\tH$ identified by the edges $e_{i_1}, ..., e_{i_n}$ corresponds to the simplex $[e_{i_1}, ..., e_{i_n}]$. The vector space $C^{n}_{\mu}(\tG;\bK)$ has one generator $b_{\tH}$ for each multipath $\tH$ of length $n$, and the differential is given by
\[ d(b_{\tH}) = \sum_{\tH' \supset \tH} (-1)^{\epsilon(\tH' , \tH)} b_{\tH'},\]
for a certain sign assignment $\epsilon$. 
It follows that the correspondence $b_{\tH} \mapsto [e_{i_1}, ..., e_{i_n}]$, where $\tH$ is identified by the edges $e_{i_1}, ..., e_{i_n}$, extends to an isomorphism of graded vector spaces which commutes with the differentials up to a sign. Note that a multipath of length $n$ corresponds to an $(n-1)$-dimensional simplex, which gives the shift in cohomological degree.

We conclude by observing that  $\epsilon'$ is a sign assignment on the face poset of $X(\tG)$, which is isomorphic to $P(\tG)$; in fact, the statement now follows from the uniqueness (up to isomorphism) of the sign assignment on the path posets \cite[Corollary~3.17]{primo}.
\end{proof}

An alternative way to associate to~$P(\tG)$ a simplicial complex $X'(\tG)$, having the same cohomology groups as $X(\tG)$, is to use the \emph{order complex}~$\Delta(P(\tG))$ -- see~\cite{Wachs}. 
It is a standard fact that, for a simplicial complex~$X$, the realisation of $\Delta(\mathcal{F}(X))$, that is the order complex of the face poset of $X$, is the barycentric subdivision of~$X$. Consequently, $X'(\tG)$ is the barycentric subdivision of $X(\tG)$. 
It is well-known that a simplicial complex and its barycentric subdivision have the same simplicial (co)homology.
As an immediate consequence we have the following corollary.

\begin{cor}
The  multipath cohomology $\mathrm{H}_\mu^*(\tG;\mathbb{K})$ is the reduced (simplicial) cohomology of the order complex~$\Delta(P(\tG))$ associated to $P(\tG)$.
\end{cor}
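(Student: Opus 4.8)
The plan is to assemble the corollary from Theorem~\ref{thm:multipath is simplicial} together with the two classical facts recalled in the paragraph just above it: that the order complex of a face poset is a barycentric subdivision, and that barycentric subdivision does not change simplicial (co)homology. First I would record what is already available. By the Construction of $X(\tG)$ — in agreement with Example~\ref{kjaf} and Lemma~\ref{lemma:hgag} — the path poset $P(\tG)$ is isomorphic to the face poset $\mathcal{F}(X(\tG))$ of the simplicial complex $X(\tG)$, the empty multipath corresponding to the minimum $\hat 0$ (the empty simplex); and by Theorem~\ref{thm:multipath is simplicial}, $\mathrm{H}_\mu^{n}(\tG;\bK)\cong\widetilde{\rm H}^{n-1}(X(\tG);\bK)$.

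Next I would invoke the identification $\Delta(\mathcal{F}(X))\cong\mathrm{sd}(X)$, valid for any finite simplicial complex $X$, where $\Delta(\mathcal{F}(X))$ denotes the order complex formed on the nonempty faces of $X$ and $\mathrm{sd}(X)$ is the barycentric subdivision (cf.~\cite{Wachs}, see also \cite{Kozlov}). Specialising to $X=X(\tG)$ and using $\mathcal{F}(X(\tG))\cong P(\tG)$ identifies the order complex $\Delta(P(\tG))$ — understood, as usual, on the nonempty multipaths — with $\mathrm{sd}(X(\tG))$. Since $|\mathrm{sd}(X(\tG))|$ is homeomorphic to $|X(\tG)|$ and simplicial cohomology is a topological invariant, this yields $\widetilde{\rm H}^{*}(\Delta(P(\tG));\bK)\cong\widetilde{\rm H}^{*}(X(\tG);\bK)$, which is precisely the invariance of cohomology under barycentric subdivision recalled before the corollary. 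Chaining the isomorphisms gives $\mathrm{H}_\mu^{n}(\tG;\bK)\cong\widetilde{\rm H}^{n-1}(X(\tG);\bK)\cong\widetilde{\rm H}^{n-1}(\Delta(P(\tG));\bK)$, with the same degree shift by one as in Theorem~\ref{thm:multipath is simplicial}.

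Since everything here reduces to facts already in play, I do not expect a serious obstacle; the one place that needs genuine care — and the natural spot for a referee to probe — is the treatment of the augmentation. If $\hat 0 = \emptyset$ were kept inside the order complex, that complex would be a cone (every maximal chain contains $\hat 0$), hence contractible, with vanishing reduced cohomology; so the order complex must be formed on the proper part $P(\tG)\setminus\{\hat 0\}$, i.e.\ on the nonempty multipaths. Symmetrically, the degree shift by one and the use of \emph{reduced} cohomology on the $X(\tG)$ side are exactly what compensate for discarding $\hat 0$. Making this bookkeeping explicit, and citing \cite{Wachs} for $\Delta(\mathcal{F}(X))\cong\mathrm{sd}(X)$ together with the passage preceding the corollary for subdivision-invariance, completes the argument.
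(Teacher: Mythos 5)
Your proposal is correct and follows essentially the same route as the paper: identify $P(\tG)$ with the face poset of $X(\tG)$, use the standard fact that the order complex of a face poset is the barycentric subdivision, invoke subdivision-invariance of simplicial cohomology, and chain these with Theorem~\ref{thm:multipath is simplicial}. Your explicit remark that the order complex must be formed on the proper part $P(\tG)\setminus\{\hat 0\}$ (lest it be a contractible cone) is a worthwhile piece of bookkeeping that the paper leaves implicit.
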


Observe that the isomorphism of Theorem~\ref{thm:multipath is simplicial} is well-behaved in a functorial sense:

\begin{rem}\label{rem:naturality}
By Remark~\ref{def:pathcomplx}, associating the simplicial complex $X(\tG)$ to a graph $\tG$ is functorial with respect to morphisms of digraphs.  Therefore, since taking cohomology groups is functorial with respect to simplicial maps, the isomorphism in cohomology provided by Theorem~\ref{thm:multipath is simplicial} induces a natural isomorphism  of functors $\eta\colon\Hmu^*(-;\bK) \Rightarrow \widetilde{\rm H}^{*-1}(-;\bK)\circ X$. More concretely, given a  morphism of digraphs $\phi\colon \tG\to\tG'$, we obtain the following square
     \[\begin{tikzcd}
\Hmu^*(\tG';\bK)\arrow[r, "\phi^*"]\arrow[d, "\eta_{\tG^\prime}"']& \Hmu^*(\tG;\bK)\arrow[d, "\eta_\tG"] \\
\widetilde{\rm H}^{*-1}(X(\tG');\bK)\arrow[r,  "\phi^*"' ]& \widetilde{\rm H}^{*-1}(X(\tG);\bK)
	\end{tikzcd}\]
that is commutative, with vertical arrows which are isomorphisms. This, in particular, extends the functoriality result \cite[Theorem~1.3]{primo} also to non-regular morphisms of digraphs.
\end{rem}

\subsection{Mayer-Vietoris from the topological viewpoint}\label{sec:MVtop}

In this subsection we reinterpret the Mayer-Vietoris-type theorem for multipath cohomology, that is Theorem~\ref{thm:MVposets}, using the simplicial description given in Theorem~\ref{thm:multipath is simplicial}.

Let $X$ be a simplicial complex and assume that there are  sub-complexes\footnote{A \emph{sub-complex} $Y$ of a simplicial complex $X$ is a subset of $X$ which is itself a simplicial complex.} $Y_1$ and $Y_2$ such that: 
\begin{enumerate}
\item $X = Y_1 \cup Y_2$;
\item their intersection of $Y_1$ and $Y_2$ is a sub-complex of $X$, $Y_1$, and $Y_2$;
\end{enumerate}
then, we have a long exact sequence in (reduced) simplicial cohomology
\[ \cdots \to \widetilde{\rm H}^i(X;\bK) \longrightarrow  \widetilde{\rm H}^i(Y_1;\bK) \oplus  \widetilde{\rm H}^i(Y_2; \bK) \longrightarrow  \widetilde{\rm H}^i(Y_1 \cap Y_2;\bK) \longrightarrow  \widetilde{\rm H}^{i+1} (Y_1 \cap Y_2;\bK) \to \cdots\]
called \emph{Mayer-Vietoris sequence} \cite[Chapter~2.2]{hatcher}. Intuitively, this sequence ``describes'' the cohomology of the space $X$ in terms of the cohomology of $Y_1$, $Y_2$, and $Y_1 \cap Y_2$.

In Theorem~\ref{thm:MVposets}, we obtained a similar sequence for path posets associated to regular morphisms of digraphs. We can actually use the correspondence between multipath and simplicial cohomologies to re-prove this result; we can interpret the path poset of a graph \tG as the face poset of a simplicial complex $X(\tG)$. Furthermore, a downward closed sub-poset $S$ of $P(\tG)$ corresponds to a (face poset of a) sub-complex of $X(\tG)$ -- the correspondence being described by taking the simplices of $X(\tG)$ given by the multipaths belonging to $S$.
By Theorem~\ref{thm:multipath is simplicial}, since we have $\Hmu^{*}(\tG;\bK) \cong \widetilde{\rm H}^{*-1}(X(\tG);\bK)$, we can replace the cohomology of spaces with the multipath cohomology of the corresponding graph with the corresponding multipath cohomology, obtaining the sequence in Theorem~\ref{thm:MVposets}.
To a more intimate level, and in a more abstract language, this is due to the fact that the gluing construction in Definition~\ref{nabla} represents the pushout in $\mathbf{Digraph}$ -- cf.~Remark~\ref{rem:nabla is pushout}.

\begin{rem}\label{rem:mvtop}
The Mayer-Vietoris long exact sequence in homology is classically obtained using the long exact sequence of the pair applied to homotopy pushouts \cite[Theorem~6.3]{Rotman1979AnIT}. Since the gluing construction in Definition~\ref{nabla} gives a pushout diagram of graphs, it is reasonable to think that a Mayer-Vietoris long exact sequence can be obtained also for multipath cohomology. In fact,
let $\tG, \tG_1, \tG_2$ as in Definition~\ref{nabla}; by Remark~\ref{rem:nabla is pushout}, the square
\begin{equation}\tag{$\spadesuit$}\label{eq:square path poset}
\begin{tikzcd}
 P(\tG) \arrow[r, "i_1"] \arrow[d, "i_2"'] & P(\tG_1) \arrow[d, "j_1"]\\
P(\tG_2)\arrow[r, "j_2"] & P(\tG_1) \nabla_{P(\tG)} P(\tG_2)
	\end{tikzcd}
		\end{equation}
	is a pushout square. Observe that also the square 
	\begin{equation}\tag{$\diamondsuit$}\label{eq:square simpl}
	    \begin{tikzcd}
 X(\tG) \arrow[r, "\iota_1"] \arrow[d, "\iota_2"'] & X(\tG_1) \arrow[d, "\jmath_1"]\\
X(\tG_2)\arrow[r, "\jmath_2"] &  X(\tG_1)\coprod_{X(\tG)} X(\tG_2)
	\end{tikzcd}
	\end{equation}
	is a pushout square of simplicial complexes, where $X\coprod_{Z} Y$ indicates the gluing of $X$ and~$Y$ along $Z\hookrightarrow X,Y$. 
	Furthermore, taking the face poset takes the square~\eqref{eq:square simpl} to the pushout square~\eqref{eq:square path poset}; i.e.~$\mathcal{F}(X(\tG_1) \coprod_{X(\tG)} X(\tG_2)) = P(\tG_1) \nabla_{P(\tG)} P(\tG_2)$. 
	As inclusions of simplicial complexes are cofibrations, the square~\eqref{eq:square simpl} is (equivalent to) a homotopy pushout square. Hence, Diagram~\eqref{eq:square simpl} gives rise to the Mayer-Vietoris long exact sequence in reduced cohomology. Naturality of Theorem~\ref{thm:multipath is simplicial}, as in  Remark~\ref{rem:naturality}, now gives the Mayer-Vietoris long exact sequence in multipath cohomology. In the light of the above discussion, we can think informally that ``inclusions as downward closed sub-posets are cofibrations in the category of posets, and the corresponding nablas are actually homotopy pushouts'' -- for a more precise treatment of the homotopy theory of posets, see for example~\cite{hha/1296223882}. 
\end{rem}

\subsection{Realisability of path posets and examples}

It is interesting to understand which simplicial complexes can be realised as $X(\tG)$ for some digraph~$\tG$. In this subsection we investigate this problem and some of its consequences.
First, we observe that all the spheres can be realised.

\begin{example}\label{ex:spheres}
Let $n\geq 1$ be a natural number.
The boundary of the $n$-dimensional standard simplex $\Delta^{n}$ is homeomorphic to the $(n-1)$-dimensional sphere $\bS^{n-1}$; we argue that $\partial \Delta^{n} $ is the simplicial complex~$ X(\tP_{n})$, where $\tP_{n}$ is the coherently oriented polygonal graph on $n$ vertices as illustrated in Figure~\ref{fig:poly}. By \cite[Section~2.3]{primo}, the path poset~$P(\tP_{n})$ is isomorphic (as posets) to the Boolean poset~$\mathbb{B}(n+1) $ with its maximum removed. Note that the $(n+1)$-Boolean poset~$\mathbb{B}(n+1) $ is the face poset~$\mathcal{F}(\Delta^n)$ of $\Delta^{n}$. Hence, it follows that $P(\tP_{n}) $ is the face poset of $\Delta^{n}$ minus its unique $(n+1)$-dimensional cell, which turn to be precisely the sphere~$\partial \Delta^n\cong \bS^{n-1}$. As a consequence, by Theorem~\ref{thm:multipath is simplicial}, we get
\[ 
\Hmu^i (\tP_n;\bK) \cong \widetilde{\rm H}^{i-1}(\bS^{n-1};\bK)
\cong \begin{cases} \bK & \text{ if } i = n, \\ 0 & \text{ otherwise}\end{cases} \]
the computations of the  cohomology of $\tP_n$, with coefficients in $\bK$, for every $n\in\bN$. Alternatively, this could have been obtained as a consequence of the isomorphism with Hochschild homology of $\bK$ -- cf.~\cite[Proposition~1.4]{primo}.
\end{example}

Another example is given by the dandelion graphs; their multipath complexes (or, better, their geometric realisations) are homotopy equivalent to wedges of $1$-dimensional spheres.

\begin{example}\label{ex:K-nm}
Consider the dandelion graph~$\tD_{n,m}$ 
in~Figure~\ref{fig:nmgraph}. We assume $nm>0$.
The multipath complex of $\tD_{n,m}$ is easily described as follows;
we have two sets $e_{1},...,e_{n}$ and $e'_{1},...,e'_{m}$ of  $0$-cells  in $X(\tD_{n,m})$, and each $e_i$ is joined with each $e'_j$ by a $1$-cell, and there are no other $1$- or higher cells.
Thus,  $X(\tD_{n,m})$ is the complete bipartite graph $K_{n,m}$. It follows that
\[ \Hmu^i (\tD_{n,m};\bK) \cong \widetilde{\H}^{i-1}(K_{n,m};\bK) \cong \begin{cases} \bK^{(n-1)(m-1)} & i =2, \\ 0 & \text{otherwise}.\end{cases} \]
In fact, since $K_{n,m}$ is a connected $1$-dimensional complex, we have that: 
$\widetilde{\H}^{0}(K_{n,m};\bK) = 0$, and ${\rm rank}(\widetilde{\H}^{1}(K_{n,m};\bK) )$ equals the number of edges minus the number of vertices plus one, which yields exactly $(n-1)(m-1)$.
\end{example}

It is yet not clear whether multipath cohomology can be supported in different degrees. In the next example we show that this is  also possible.

\begin{example}\label{ex:squarediag}
Consider the graph ${\tt Q}$ to be the coherently oriented polygon $\tP_3$ with a diagonal, as illustrated in Figure~\ref{fig:diag square}. 
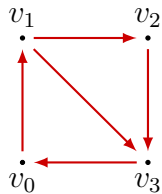
\begin{figure}[h]
	\begin{tikzpicture}[scale=0.55][baseline=(current bounding box.center)]
		\tikzstyle{point}=[circle,thick,draw=black,fill=black,inner sep=0pt,minimum width=2pt,minimum height=2pt]
		\tikzstyle{arc}=[shorten >= 8pt,shorten <= 8pt,->, thick]

		\node  (v0) at (0,0) {};
		\node[below] at (0,0) {$v_0$};
		\draw[fill] (0,0)  circle (.05);
		\node  (v1) at (0,3) { };
		\node[above]  at (0,3) {$v_1$};
		\draw[fill] (0,3)  circle (.05);
		\node  (v2) at (3,3) { };
		\node[above] at (3,3) {$v_{2}$};
		\draw[fill] (3,3)  circle (.05);
		\node  (v3) at (3,0) { };
		\node[below]  at (3,0) {$v_{3}$};
		\draw[fill] (3,0)  circle (.05);
		
		\draw[thick, bunired, -latex] (v0) -- (v1);
		\draw[thick, bunired, -latex] (v1) -- (v2);
		\draw[thick, bunired, -latex] (v2) -- (v3);
		\draw[thick, bunired, -latex] (v3) -- (v0);
		\draw[thick, bunired, -latex] (v1) -- (v3);
	\end{tikzpicture}
	\caption{The coherently oriented polygon $\tP_3$ with a diagonal.}
	\label{fig:diag square}
\end{figure}
The path poset associated to ${\tt Q}$ has multipaths of length at most~$3$, corresponding to having at most $2$-simplices in the realisation of $X({\tt Q})$. The realisation of the multipath complex associated to $\tP_3$ is isomorphic to the $2$-sphere $\bS^2$, and adding the diagonal results in adding two $1$-cells to $\bS^2$. A depiction of the geometric realisation of $X({\tt Q})$ is given in Figure~\ref{fig:CW-square-graph}; we have decorated the vertices in the realisation corresponding to the edges~$(v_1,v_2)$ and $(v_1,v_3)$ of ${\tt Q}$, all the others being interchangeable in the realisation. 
 \begin{figure}[h]
	\centering
		\begin{tikzpicture}[thick]
		\draw[fill] (0,0)  circle (.05);
		\node[below] at (0,0)  {$[v_1,v_2]$};

		\draw[fill] (2.7,0) circle (.05);

		\draw[fill] (1.2,2.5) circle (.05);

		\draw (0,0) --  (2.7,0);
		\draw (0,0) -- (1.2,2.5);
		\draw (2.7,0) -- (1.2,2.5);
		
	    \draw[fill=green,opacity=0.15] (0,0) -- (2.7,0) -- (1.2,2.5) -- (0,0) --cycle; 	
		
		\draw[fill] (2.8,0.8) circle (.05);

		\draw[dotted] (2.8,0.8) --  (0,0);

		\draw (2.7,0) -- (2.8,0.8);
		\draw (2.8,0.8) -- (1.2,2.5);
		
	    \draw[fill=green,opacity=0.25] (2.8,0.8) -- (2.7,0) -- (1.2,2.5) -- (2.8,0.8) --cycle;
	    
		\draw[fill] (4.5,1.7) circle (.05);
		\node[right] at (4.5,1.7)  {$[v_1,v_3]$};

		\draw (4.5,1.7) -- (2.8,0.8);
		\draw (1.2,2.5) -- (4.5,1.7);
		\end{tikzpicture}
		\caption{The geometric realisation of $X(\tG)$, where $\tG$ is the graph in Figure~\ref{fig:diag square}. In green is illustrated the boundary of a tetrahedron.}\label{fig:CW-square-graph}
	\end{figure}
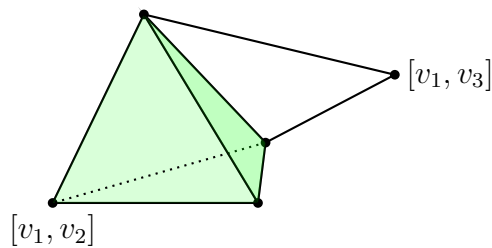
The simplicial complex~$X(\tG)$ is {homotopically} equivalent to the wedge of $\bS^2$ with a copy of $\bS^1$, showing that the multipath cohomology
\[ \Hmu^*({\tt Q};\bK) \cong \begin{cases} \bK & \text{ for } n = 2,\\ \bK & \text{ for } n = 3,\\ 0 & \text{ otherwise.}\\  \end{cases}\]
 of ${\tt Q}$  is non-zero in degrees $2$ and $3$ by Theorem~\ref{thm:multipath is simplicial}.
\end{example}

Let $\tG'$ be a digraph with a univalent vertex $w$. In Lemma~\ref{lem:shift homology construction}, we have proved that there exists a graph $\tG$, obtained by gluing a linear sink or source to $w$ (cf.~Figure~\ref{fig:y}), such that $\Hmu^*(\tG;\bK) \cong \Hmu^{*+1}(\tG';\bK)$.
Classically, this is the property of the suspension\footnote{the topological space $\Sigma X$ obtained from $X$ by taking the cylinder $X\times [0,1]$, and collapsing each of the faces $X\times \{0\}$ and $X\times \{ 1\}$ to a point. Alternatively, the suspension can be seen as two copies of the cone $(X\times [0,1])/(x,1)\sim (y,1)$ glued along $X\times \{ 0 \}$.}  $\Sigma X$. 
In the proof of the next proposition we will see that these two constructions are related. Let $\bigvee^{k} \mathbb{S}^{n}$ be the wedge of $k$ $n$-dimensional spheres.
\begin{prop}\label{prop:wedgsofspheres}
For all $k, n \in \mathbb{N}$, there exists a graph $\tG_{k,n}$ such that
\[ \bigvee^{k} \mathbb{S}^{n} \simeq | X(\tG_{k,n}) | \]
where $| X(\tG_{k,n}) |$ is the geometric realisation of the (abstract) simplicial complex $X(\tG_{k,n})$.
\end{prop}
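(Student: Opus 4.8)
The plan is to construct $\tG_{k,n}$ by induction on $n$. The dandelion graphs of Example~\ref{ex:K-nm} will furnish the base cases, and the ``coherent tail'' cone construction used in the proof of Lemma~\ref{lem:shift homology construction} will furnish the inductive step, once it is reinterpreted at the level of spaces: I want to show that the graph operation there realises, on multipath complexes, the \emph{topological suspension}. Since $\Sigma$ commutes with wedges and $\Sigma\mathbb{S}^m=\mathbb{S}^{m+1}$, iterating the step starting from a suitable $n=1$ (or $n=0$) base case produces all the wedges $\bigvee^k\mathbb{S}^n$.

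First I would settle the base cases. For $n=1$ take $\tG_{k,1}\coloneqq \tD_{k+1,2}$: by Example~\ref{ex:K-nm} the realisation $|X(\tD_{k+1,2})|$ is the complete bipartite graph $K_{k+1,2}$, which is homotopy equivalent to $\bigvee^{(k+1-1)(2-1)}\mathbb{S}^1=\bigvee^k\mathbb{S}^1$ (a point when $k=0$). For $n=0$ take $\tG_{k,0}\coloneqq \tD_{0,k+1}$: its $k+1$ edges all share a common endpoint, so no two of them lie in a common multipath, hence $X(\tD_{0,k+1})$ has exactly $k+1$ vertices and no higher simplices, so $|X(\tD_{0,k+1})|$ is $k+1$ isolated points, i.e.\ $\bigvee^k\mathbb{S}^0$. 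Both families of graphs have a univalent vertex, which is what makes the induction run.

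The inductive step is the claim: if $\tG'$ has a univalent vertex $w$ and $\tG$ is obtained from $\tG'$ exactly as in the proof of Lemma~\ref{lem:shift homology construction} (glue a linear sink at $w$ if $w$ is a source of its edge, a linear source if $w$ is a target), then $|X(\tG)|\simeq\Sigma|X(\tG')|$, and $\tG$ again has a univalent vertex (a leaf of the glued sink/source). To prove it I would run the argument of Lemma~\ref{lem:shift homology construction}: Theorem~\ref{finedimondo} applied at $w$ — now the common target (resp.\ source) of the two new edges $f_1,f_2$ — gives $P(\tG)\cong\nabla_{P(\tG')}\bigl(P(\tG_w^{(1)}),P(\tG_w^{(2)})\bigr)$ with $\tG_w^{(h)}=\tG'\cup\{f_h\}$. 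Each $\tG_w^{(h)}$ has a coherent tail (the new leaf together with the old edge at $w$), so by the cone analysis in the proof of Criterion~\ref{crit:istmo} and Remark~\ref{rem:tail} one gets $P(\tG_w^{(h)})\cong\Cone P(\tG')$, with the subposet $P(\tG')$ along which one glues corresponding to the ``base'' copy $P(\tG')\times\{0\}$, not to the apex. Matching covering relations (Remark~\ref{coveringcone}) identifies $\Cone P(\tG')$ with the face poset of the simplicial cone $CX(\tG')$ and the base copy with the face poset of $X(\tG')\hookrightarrow CX(\tG')$; so by the poset/complex dictionary of Remark~\ref{rem:mvtop}, $P(\tG)$ is the face poset of $CX(\tG')\cup_{X(\tG')}CX(\tG')=\Sigma X(\tG')$, whence $|X(\tG)|\cong\Sigma|X(\tG')|$. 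Then, for $n\ge 2$, I would let $\tG_{k,n}$ be the result of iterating this step $n-1$ times from $\tG_{k,1}$; since $\bigvee^k\mathbb{S}^1$ is a connected CW complex (unreduced and reduced suspension agree up to homotopy, and $\Sigma$ commutes with wedges), $|X(\tG_{k,n})|\simeq\Sigma^{\,n-1}\bigl(\bigvee^k\mathbb{S}^1\bigr)\simeq\bigvee^k\mathbb{S}^n$, which together with the $n=0$ base case finishes the proof.

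The main obstacle — the part I would spend care on — is the inductive step: checking that $\tG_w^{(h)}$ really has a coherent tail in the sense of Remark~\ref{rem:tail} (so that Criterion~\ref{crit:istmo} applies and $P(\tG_w^{(h)})$ is genuinely a cone over $P(\tG')$), and, crucially, that the isomorphism $P(\tG_w^{(h)})\cong\Cone P(\tG')$ identifies the glued-in subposet with the \emph{base} rather than the apex, so that $\nabla$ glues two cones along their common base and produces a suspension rather than something degenerate. Everything else — the two base-case computations and the homotopy-theoretic passage $\Sigma^{\,n-1}(\bigvee^k\mathbb{S}^1)\simeq\bigvee^k\mathbb{S}^n$ — is routine.
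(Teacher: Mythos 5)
Your proposal is correct and follows essentially the same route as the paper: the same base cases ($\tD_{k+1,2}$ for $n=1$ and a sink/source dandelion for $n=0$, yours being the orientation-reverse of the paper's, which has the same path poset by Remark~\ref{rem:dandelion reversal}), and the same inductive step of gluing a linear sink or source at a univalent vertex, identifying the two pieces of the Theorem~\ref{finedimondo} decomposition as cones over $P(\tG')$ glued along their common base, and realising the result topologically as the suspension. Your extra care in checking that the gluing happens along the base of each cone rather than the apex is a detail the paper's proof leaves implicit, but the argument is the same.
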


\begin{proof}
Let $\tD_{n,m}$ be a dandelion graph, cf.~Definition~\ref{dandelion} and Figure~\ref{fig:nmgraph}.
The geometric realisation of $\tG_{k,0} \coloneqq \tD_{k+1,0}$ consists of $k+1$ points, which can be seen as a wedge of $k$ $0$-spheres. 
In Example~\ref{ex:K-nm}, we have shown that $X(\tD_{k+1,2})$ for $k>0$ is homotopy equivalent to a wedge of $k$ $1$-spheres. In order to get a wedge $\bigvee^{k} \mathbb{S}^{n}$  of higher dimensional spheres, it is enough to iteratively glue sinks or sources to $\tG_{k,1} \coloneqq \tD_{k+1,2}$. 

More precisely, we fix $k$ and proceed by induction on $n$. 
We assume to have already constructed a graph $\tG_{k,n}$, whose associated multipath complex is homotopic to a wedge of $k$ $n$-dimensional spheres, and that it has a univalent vertex, say $v$.
Now, we glue a source or a sink, depending on whether $v  = s(e)$ or $v = t(e)$, for some edge $e\in E(\tG_{k,n})$ -- cf.~Lemma~\ref{lem:shift homology construction} and Figure~\ref{fig:y}. Define $\tG_{k,n+1}$ to be the graph obtained this way. Note that $\tG_{k,n+1}$ has at least two univalent vertices.
Now, by Theorem~\ref{finedimondo} applied to~$v$, the path poset of $\tG_{k,n+1}$ is the gluing of two copies of $\Cone P(\tG_{k,n})$ (which are $P((\tG_{k,n})_v^{(1)})$ and $P((\tG_{k,n})_v^{(2)})$, in the notation of Theorem~\ref{finedimondo}) glued along $P(\tG_{k,n})$. This can be illustrated using the following pushout diagram
\[\begin{tikzcd}
P(\tG_{k,n})\arrow[r, "\jmath_1"]\arrow[d, "\jmath_2"']& \Cone P(\tG_{k,n}) \arrow[d, "\imath_1"] \\
\Cone P(\tG_{k,n})\arrow[r,  "\imath_2"' ]& \tG_{k,n+1} 
	\end{tikzcd} \]
By passing to the geometric realisations of  multipath complexes (see also Remark~\ref{rem:mvtop}), we obtain the pushout diagram 
	\newcommand{\cone}{\rm Cone}
	\[
 \begin{tikzcd}
\vert X(\tG_{k,n}) \vert \simeq \bigvee^{k} \mathbb{S}^{n} \arrow[r, "\iota_1"] \arrow[d, "\iota_2"'] & \cone (\bigvee^{k} \mathbb{S}^{n})\simeq  * \arrow[d, "\jmath_1"]\\ *\simeq\cone (\bigvee^{k} \mathbb{S}^{n})\arrow[r, "\jmath_2"] &  \Sigma(\bigvee^{k} \mathbb{S}^{n})\simeq |X(\tG_{k,n+1})|
	\end{tikzcd}
	\]
of topological spaces, where $*$ denotes the one point space. As the suspension of a wedge on~$k$ $n$-spheres is homotopy equivalent to a wedge of $k$ $(n+1)$-spheres, the statement follows. 
\end{proof}

The proposition says that  wedges of spheres of the same dimension can be realised, up to homotopy, as multipath complexes of certain digraphs. It is therefore natural to ask whether wedges of spheres of different dimensions can also be realised as (geometric realisations of) multipath complexes. We have seen, in Example~\ref{ex:squarediag}, that the wedge $\mathbb{S}^1\vee \mathbb{S}^2$ can be realised, but it is not clear whether all wedges of spheres can be:
\begin{q}\label{q:realisability1}
Can we realise, up to homotopy, all wedges of spheres? For example, can we realise a space homotopic to $\bS^1 \vee ... \vee \bS^1 \vee \bS^0 \vee ...\vee \bS^0$?
\end{q}
We observe here that, if we allow the graphs to be disconnected, then it is possible to obtain also the topological join $X(\tG) \ast X(\tG')$ of $X(\tG)$ and $X(\tG')$; concretely, this can be realised as $X(\tG \sqcup \tG')$ -- compare~\cite[Definition 2.16]{Kozlov}, and Remark~\ref{DisjointUnion}.
Note that the join of simplicial complexes commutes with the geometric realisation~\cite[Equation (2.9)]{Kozlov}.
In particular, we can realise the suspension $\Sigma X(\tG)$ of $X(\tG)$ also as the multipath complex $X(\tG \sqcup \tA_{2})$~\cite[Example 2.32.(1)]{Kozlov}. Realising  joins allows us to obtain more complicated wedges of spheres giving a partial answer to the question above. 
\begin{example}
The join commutes, up to homotopy, with the wedge operation, and it is well-known that $\mathbb{S}^{n}\ast\mathbb{S}^{m} \simeq \mathbb{S}^{n+m+1}$ -- cf.~\cite{JoinandWedge}.
An immediate consequence of these facts is that
\[ \bigvee^{k}_{i=1} \mathbb{S}^{n_i} \ast \bigvee^{h}_{j=1} \mathbb{S}^{m_j} \simeq  \bigvee^{k}_{i=1}\bigvee^{h}_{j=1} \mathbb{S}^{n_i+m_j + 1} \ . \]
For instance, we have shown in Example~\ref{ex:squarediag} that we can realise $\bS^2 \vee \bS^1$ as $|X({\tt Q})|$, where ${\tt Q}$ is the square $\tP_3$ with a diagonal. Therefore, we have that
\[ \bS^{5} \vee \bS^{4} \vee \bS^{4} \vee \bS^{3} \simeq (\bS^2 \vee \bS^1)\ast (\bS^2 \vee \bS^1)\simeq \vert X({\tt Q} \sqcup {\tt Q}) \vert \ . \]
\end{example}

{Since all spaces we are able to realise are, up to homotopy, wedges of spheres, the following question arises naturally;}

\begin{q}\label{q:realisability2}
Can we realise spaces which are not homotopic to wedges of spheres, e.g.~the real projective spaces?
\end{q}

Before further discussing the realisability of simplicial complexes via path posets, we want to extend the category of digraphs.
Recall that a \emph{multigraph}, or \emph{quiver}, is $4$-uple $(V,E,s,t)$ where $V$ and~$E$ are finite sets, whose elements are called \emph{vertices} and \emph{edges} respectively,
while $ s,t \colon E \longrightarrow V $
are the \emph{source} and \emph{target} functions. To avoid self-loops we require $t(e)\neq s(e)$, for all $e\in E$.
Clearly, all digraphs can be seen as multigraphs.
The definition of path poset and multipath cohomology extends verbatim to multigraphs. Similarly, we have an isomorphism between  multipath cohomology and simplicial homology of $X(\tG)$.

\begin{figure}
    \centering
    \begin{tikzpicture}[scale=0.6][thick]
    \draw[fill] (-2,1.5) circle (.075) -- (2,-1.5) circle (.075);
    \draw[fill] (-2,-1.5) circle (.075) -- (2,1.5) circle (.075);
    \draw (-2,1.5) -- (-2,-1.5);
    \draw (2,1.5) -- (2,-1.5);
    \draw[fill] (0,0) circle (.075);
    
    \node[above] at (-2,1.5) {$e_1$};
    \node[above] at (2,1.5) {$e_4$};
    \node[below] at (-2,-1.5) {$e_2$};
    \node[below] at (2,-1.5) {$e_3$};
    \node[above] at (0,0) {$e_5$};
    \end{tikzpicture}
    \caption{A $1$-dimensional complex $X$ whose face poset is not a path poset.}
    \label{fig:face no path}
\end{figure}
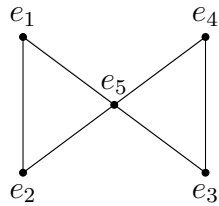

\begin{prop}\label{prop:facenopathposet}
There are simplicial complexes whose face poset cannot be realised as the path poset of any multigraph. In other words, the functor $X\colon \mathbf{Digraph} \to \mathbf{SimpComp}$  (cf.~Remark~\ref{rem:functor}), or better its extesion to multigraphs, is not (essentially\footnote{That is up to (simplicial, in our case) isomorphism.}) surjective on objects.
\end{prop}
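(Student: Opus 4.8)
The plan is to exhibit a single simplicial complex $X$ whose face poset provably cannot be a path poset, and the natural candidate is the $1$-dimensional complex (graph) drawn in Figure~\ref{fig:face no path}. First I would record a structural necessary condition on path posets of multigraphs coming from the combinatorics of multipaths: if a multigraph $\tG$ has a simple path using edges $e,f$ with the endpoint shared by $e$ and $f$ being the vertex $v$, then in particular $v=t(e)=s(f)$ (or $v=s(e)=t(f)$), and this forces constraints on which other edges can be adjoined to $\{e,f\}$ to stay a multipath — for instance no third edge incident to $v$ can be added, whereas an edge incident to a \emph{different} endpoint of the path may be. Translated to $X(\tG)$, this says the $1$-skeleton of $X(\tG)$ (the "adjacency graph" of edges of $\tG$) is highly constrained: at each edge $e$ of $\tG$ of valence pattern determined by $s(e)$ and $t(e)$, the link of the $0$-cell $e$ in $X(\tG)$ decomposes as (edges through $s(e)$) $\sqcup$ (edges through $t(e)$), i.e.\ it is covered by at most two cliques.

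The key step is then to show the target complex $X$ from Figure~\ref{fig:face no path} violates this. As a graph, $X$ has five vertices $e_1,\dots,e_5$ and I would read off its edges: $e_1e_5$, $e_2e_5$? — in any case the point is that the middle vertex $e_5$ is adjacent to all four outer vertices while the outer four form a $4$-cycle $e_1e_2e_3e_4$, and the crossing edges $e_1e_3$, $e_2e_4$ are \emph{absent}. So the link (neighbourhood) of $e_5$ in $X$ is the $4$-cycle on $\{e_1,e_2,e_3,e_4\}$, which cannot be written as a union of two cliques (a $4$-cycle has no triangle, so its cliques are single vertices or single edges, and two edges cannot cover four vertices-plus-edges of a $C_4$). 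Hence if $X = X(\tG)$, the $0$-cell $e_5$ would correspond to an edge of $\tG$ whose link is not covered by two cliques, contradicting the structural lemma; therefore no such $\tG$ exists.

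Concretely the proof has three parts: (i) state and prove the \emph{link lemma}: for any multigraph $\tG$ and any edge $e\in E(\tG)$, the set of edges $f$ with $\{e,f\}$ a multipath of $\tG$ splits as $\{f : t(f)=s(e)\text{ or }s(f)=t(e)\}$, and each of the two pieces spans a simplex (clique) in $X(\tG)$ because any collection of edges all meeting $e$ at the same endpoint, pairwise disjoint otherwise, together with $e$ forms a multipath only if at most one of them is used — wait, here care is needed, so I would instead phrase it as: the $1$-skeleton of $\mathrm{lk}_{X(\tG)}(e)$ is a union of two cliques, one on the edges adjacent to $e$ at $s(e)$ and one on the edges adjacent at $t(e)$. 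Actually the cleanest invariant is: \emph{in $X(\tG)$, the neighbourhood of any vertex is the union of at most two cliques}, which follows since a multipath of length $2$ containing $e$ is $e\cup f$ with $f$ meeting $e$ at one of its two endpoints, and two edges meeting $e$ at the \emph{same} endpoint of $e$ are themselves adjacent in $X(\tG)$ iff they meet each other — here I must be slightly careful and may need "union of two cliques" replaced by a marginally weaker but still $C_4$-excluding statement. (ii) Compute the neighbourhood of $e_5$ in $X$ and observe it is an induced $4$-cycle, hence not a union of two cliques. (iii) Conclude.

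The main obstacle I anticipate is getting the link lemma exactly right: the naive claim "neighbourhood of a vertex of $X(\tG)$ is a union of two cliques" needs the verification that two edges $f,f'$ of $\tG$ both adjacent to $e$ at the \emph{same} endpoint of $e$, and with $\{e,f\},\{e,f'\}$ both multipaths, indeed have $\{e,f,f'\}$ — and in particular $\{f,f'\}$ — a multipath, which can fail if $f$ and $f'$ share their \emph{other} endpoint or create a cycle; so the correct formulation may be "$\mathrm{lk}_{X(\tG)}(e)$ contains no induced $4$-cycle", proved by a short case analysis on where the four edges $f_1,f_2,f_3,f_4$ realising a putative $C_4$ meet $e$ (by pigeonhole two of them, say $f_1,f_2$, meet $e$ at the same endpoint $v=s(e)$; then $f_1,f_2$ are non-adjacent in the $C_4$, so $\{e,f_1,f_2\}$ must fail to be a multipath, but $t(f_1)=t(f_2)=v$ would make $v$ have two incoming-in-the-multipath edges — contradiction, unless one of $f_1,f_2$ equals — completing the contradiction). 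Once this local-link obstruction is isolated the rest is immediate, so the real work is pinning down that lemma and the Figure~\ref{fig:face no path} computation.
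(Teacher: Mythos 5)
Your proposal has two independent, fatal problems: one with the target complex and one with the structural lemma it is supposed to violate. First, you have misread Figure~\ref{fig:face no path}. Its six $1$-simplices are $e_1e_5$, $e_2e_5$, $e_3e_5$, $e_4e_5$, $e_1e_2$ and $e_3e_4$: the complex is two hollow triangles, on $\{e_1,e_2,e_5\}$ and on $\{e_3,e_4,e_5\}$, glued at the vertex $e_5$. The outer four vertices do \emph{not} form a $4$-cycle ($e_2e_3$ and $e_1e_4$ are absent, not just the two ``crossing'' pairs), so the link of $e_5$ is two disjoint edges --- which \emph{is} a union of two cliques and contains no induced $C_4$; the links of the other four vertices are single edges. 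Even a correct version of your link lemma would therefore detect nothing in this complex, at $e_5$ or anywhere else.

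Second, the link lemma is false in every form you propose. A vertex $f$ of $X(\tG)$ is adjacent to $e$ whenever $\{e,f\}$ is a multipath, and this includes every edge $f$ \emph{disjoint} from $e$, not only the edges meeting $e$ coherently at an endpoint; your description of the neighbourhood of $e$ omits all of these. Concretely, for the H-shaped digraph of Figure~\ref{fig:CW-Hgraph} the neighbourhood of the vertex corresponding to $(v_0,v_1)$ consists of the three edges with target $v_4$, which are pairwise non-adjacent, so ``union of at most two cliques'' already fails. For $\tG=\tD_{2,2}\sqcup\tI_1$ the extra edge is disjoint from everything, so its link in $X(\tG)$ is all of $X(\tD_{2,2})$, whose $1$-skeleton is the induced $4$-cycle $K_{2,2}$; so ``no induced $C_4$ in links'' fails as well, and no local obstruction of this kind can exist. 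The paper argues globally instead: assuming $X=X(\tG)$, the sub-posets of $P(\tG)$ supported on $\{e_1,e_2,e_5\}$ and on $\{e_3,e_4,e_5\}$ are each a Boolean poset minus its maximum, so by Proposition~\ref{pro:posetpoly} each triple spans a coherently oriented triangle; one then enumerates the ways two coherently oriented triangles can share the edge $e_5$ and checks that each either creates a multipath of length $3$ or the wrong number of length-$2$ multipaths. You would need to adopt an argument of that global type; the local invariant you are trying to isolate does not exist.
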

\begin{proof}
Consider Figure~\ref{fig:face no path}, it contains the picture of a geometric realisation of the simplicial complex, say $X$.
The face poset of $X$ consists of: the empty simplex $\emptyset$, five $0$-simplices, and six $1$-simplices.

Assume, by the means of contradiction, that there exists a multigraph $\tG$ such that $X = X(\tG)$.
It follows that $\tG$ has five edges, six multipaths of length two, and no multipaths of higher length.
Note that we cannot read the number of vertices form the path poset (and thence from $X(\tG)$). Moreover, adding a vertex with no edges does not change the path poset.
Therefore, without loss of generality, we might assume that $\tG$ has no connected components which are vertices.

We claim that the edges $e_1$, $e_2$, and $e_5$, form a coherently oriented triangle in $\tG$. We start by noticing that the sub-graph $\tT$ of $\tG$ given by the edges $e_1$, $e_2$, and $e_5$ is connected.
Either all pairs of edges among $e_1$, $e_2$, and $e_5$ share a vertex, in which case $\tT$ is connected, or there are $e_i$ and $e_j$, with $i\neq j$ and $i,j\in \{ 1,2,5\}$, do not share any vertex. In the latter case, both must share at least one vertex with $e_k$,  $k\neq i,j$ and $k\in \{ 1,2,5\}$. If not, we would have a multipath of length $3$ given by $e_1,e_2,e_5$. It follows that $\tT$ is connected.
The path poset of $\tT$ is (isomorphic to) the sub-poset  of $P(\tG)$ given by multipaths with edges $e_1$, $e_2$, or $e_5$.
Pasting all the pieces together, we have that  $\tT$ is a connected graph  with $3$ edges whose path poset is a Boolean poset minus its maximum; hence, $\tT$ is a coherently oriented triangle by~\cite[Proposition~2.37]{primo}.

The same reasoning as above proves that the edges $e_3$, $e_4$, and $e_5$, also form a coherently oriented triangle in $\tG$. It follows that $\tG$ must consist of two coherently oriented triangles glued at least along one edge (namely $e_5$). 
Now we can enumerate all possible $\tG$'s; these are illustrated, up to orientation reversal and exchanging in the roles of $e_1$ and $e_2$, and of $e_3$ and $e_4$, in Figure~\ref{fig:possiblegraphs}.
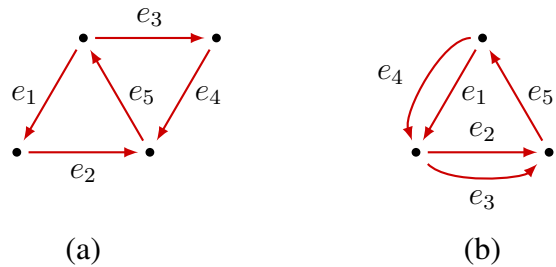
\begin{figure}[H]
    \centering
    
    \begin{tikzpicture}[thick, scale = 1.75]
     \node (a) at (0,0) {};
     \node (b) at (1,0) {};
     \node (c) at (.5,0.866) {};
     \node (d) at (1.5,.866) {};
     
     \draw[fill] (a) circle (0.025);
     \draw[fill] (b) circle (0.025);
     \draw[fill] (c) circle (0.025);
     \draw[fill] (d) circle (0.025);
     
     \node[left] at (0.25,.433) {$e_1$};
     \node[right] at (.75,.433) {$e_5$};
     \node[below] at (0.5,0) {$e_2$};
     \node[right] at (1.25,.433) {$e_4$};
     \node[above] at (1,.866) {$e_3$};
     
     \node at (.5,-.75) {(a)};
     
     \draw[-latex, bunired] (a) -- (b);
     \draw[-latex, bunired] (b) -- (c);
     \draw[-latex, bunired] (c) -- (a);
     \draw[-latex, bunired] (d) -- (b);
     \draw[-latex, bunired] (c) -- (d);
     
     \begin{scope}[shift = {+(3,0)}]
     \node (a) at (0,0) {};
     \node (b) at (1,0) {};
     \node (c) at (.5,0.866) {};
     \node (d) at (1.5,.866) {};
     
     \draw[fill] (a) circle (0.025);
     \draw[fill] (b) circle (0.025);
     \draw[fill] (c) circle (0.025);
     
     \node[right] at (0.25,.433) {$e_1$};
     \node[right] at (.75,.433) {$e_5$};
     \node[above] at (0.5,0) {$e_2$};
     \node[above left] at (0,.433) {$e_4$};
     \node[below] at (.5,-.2) {$e_3$};
     
     \node at (.5,-.75) {(b)};
     
     \draw[-latex, bunired] (a) -- (b);
     \draw[-latex, bunired] (b) -- (c);
     \draw[-latex, bunired] (c) -- (a);
     \draw[-latex, bunired] (a) .. controls +(.25,-.25) and +(-.2,-.2) .. (b);
     \draw[-latex, bunired] (c) .. controls +(-.3,0) and +(-.1,.3) ..  (a);
     \end{scope}
     \end{tikzpicture}
    \caption{Possible configurations for the graph $\tG$.}
    \label{fig:possiblegraphs}
\end{figure}
In case (a) we have two multipaths of length $3$, while in case (b) the $0$-cells corresponding to~$e_4$ and $e_3$ must be in the boundary of three $1$-cells in $X(\tG) = X$. In either case we get a contradiction, and the statement follows.
\end{proof}

Note that, while the face poset of the simplicial complex in Figure~\ref{fig:face no path} cannot be realised as a path poset, we can realise the face poset of an homotopy equivalent space as a path poset (e.g.~$P(\tD_{2,3})$, cf.~Example~\ref{ex:K-nm}).

\bibliographystyle{alpha}
\bibliography{bibliography}

\end{document}